\newtheorem{theorem}{Theorem}[section]%
\newtheorem{proposition}[theorem]{Proposition}%
\newtheorem{lemma}[theorem]{Lemma}%
\newtheorem{corollary}[theorem]{Corollary}%
\newtheorem{condition}{Condition}%
\newtheorem{definition}[theorem]{Definition}
\newtheorem{remark}{Remark}
\newcommand{\rank}{\mbox{\rm rank}}
\newcommand{\bR}{\mathbb{R}}
\newcommand{\N}{\mathbb{N}}
\newcommand{\Rnp}{\mathbb{R}^{n\times p}} 
\newcommand{\Rpp}{\mathbb{R}^{p\times p}} 
\newcommand{\Rnn}{\mathbb{R}^{n\times n}}  
\newcommand{\Rnm}{\mathbb{R}^{n\times m}}
\newcommand{\cL}{\mathcal{L}}
\newcommand{\stiefel}{\mathcal{S}_{n,p}}
\newcommand{\tr}{\mathrm{tr}}
\newcommand{\zz}{^{\top}}
\newcommand{\inv}{^{-1}}
\newcommand{\st}{\mathrm{s.\,t.}\,\,} 
\newcommand{\ff}{_{\mathrm{F}}}
\newcommand{\fs}{^2_{\mathrm{F}}}
\newcommand{\dkh}[1]{\left(#1\right)}
\newcommand{\hkh}[1]{\left\{#1\right\}}
\newcommand{\fkh}[1]{\left[#1\right]}
\newcommand{\jkh}[1]{\left\langle#1\right\rangle}
\newcommand{\norm}[1]{\left\|#1\right\|}
\newcommand{\abs}[1]{\left\lvert #1 \right\rvert}
\newcommand{\proj}{\mathrm{Proj}}
\newcommand{\projts}[2]{\mathrm{Proj}_{\mathcal{T}_{#1} \mathcal{S}_{n,p}} \dkh{#2}}
\newcommand{\prox}[2]{\mathrm{Prox}_{#1} \dkh{#2}}
\newcommand{\tanst}[1]{\mathcal{T}_{#1} \mathcal{S}_{n,p}}
\newcommand{\dsp}[2]{{\mathbf{D_p}}\left(#1,#2\right)}
\newcommand{\distp}[2]{{\mathbf{d_p}}\left(#1,#2\right)}
\newcommand{\Pv}[1]{{\mathbf{P}}^{\perp}_{#1}}
\newcommand{\dists}[2]{{\mathbf{d}^2_{\mathbf{p}}}\left(#1,#2\right)}
\newcommand{\iid}{i=1,\dotsc,d}
\newcommand{\iin}{i=1,\dotsc,n}
\newcommand{\sumiid}{\sum\limits_{i=1}^d}
\newcommand{\sumjjd}{\sum\limits_{j=1}^d}
\DeclareMathOperator*{\argmin}{arg\,min}
\begin{document}

\title{A Communication-Efficient And Privacy-Aware 
	Distributed Algorithm For Sparse PCA}

\author{Lei Wang\thanks{State Key Laboratory of Scientific and Engineering 
		Computing, Academy of Mathematics and Systems Science, Chinese  Academy of 
		Sciences, and University of Chinese Academy of Sciences, China 
		(wlkings@lsec.cc.ac.cn). Research is supported by the National Natural Science 
		Foundation of China (No. 11971466 and 11991020).}
	\and Xin Liu\thanks{State Key Laboratory of Scientific and Engineering 
		Computing, Academy of Mathematics and Systems Science, Chinese Academy of 
		Sciences, 
		and University of Chinese Academy of Sciences, China (liuxin@lsec.cc.ac.cn). 
		Research is supported in part by the National Natural Science Foundation of 
		China (No. 12125108, 11991021, and 12288201), Key Research Program of Frontier 
		Sciences, Chinese Academy of Sciences (No. ZDBS-LY-7022).}
	\and 
	Yin Zhang\thanks{School of Data Science, The Chinese University of Hong Kong, Shenzhen, China (yinzhang@cuhk.edu.cn). 
	Research is supported in part by 
	the Shenzhen Science and Technology Program 
	(No. GXWD20201231105722002-20200901175001001).
	}
}

\date{}

\maketitle

\begin{abstract}
	Sparse principal component analysis (PCA) improves interpretability of the classic PCA by 
	introducing sparsity into the dimension-reduction process.  Optimization models for 
	sparse PCA, however, are generally non-convex, non-smooth and more difficult to solve, 
	especially on large-scale datasets requiring distributed computation over a wide network.
	In this paper, we develop a distributed and centralized algorithm called DSSAL1 
	for sparse PCA that aims to achieve low communication overheads by adapting 
	a newly proposed subspace-splitting strategy to accelerate convergence.  
	Theoretically, convergence to stationary points is established for DSSAL1.
	Extensive numerical results show that DSSAL1 requires far fewer rounds of communication than 
	state-of-the-art peer methods.
	In addition, we make the case that since messages exchanged in DSSAL1 are well-masked, 
	the possibility of private-data leakage in DSSAL1 is much lower than in some other
	distributed algorithms.
\end{abstract}

\section{Introduction}

\label{sec:introduction}

	The principal component analysis (PCA) is a fundamental and ubiquitous tool 
	in statistics and data analytics. 
	In particular, it frequently serves as a critical preprocessing step 
	in numerous data science or machine learning tasks.
	In general, solutions produced by the classical PCA are dense combinations of all features. 
	However, in practice, 
	sparse combinations not only enhance the interpretability of the principal components 
	but also reduce storage \cite{Sjostrand2007sparse,Chen2013biclustering}, 
	which motivates the idea of sparse PCA.
	More importantly, from a theoretical perspective as given in 
	\cite{Johnstone2009consistency,Zou2018}, sparse PCA is able to remediate some 
	inconsistency phenomenon present in the classical PCA under the high-dimensional setting.
	As a dimension reduction and feature extraction method, 
	sparse PCA can be widely applied to application areas where PCA is normally used;
	such as medical imaging \cite{Sjostrand2007sparse}, 
	biological knowledge mining \cite{Wu2021clusterProfiler},
	ecology study \cite{Gravuer2008strong}, cancer analysis \cite{Chen2013biclustering}, 
	neuroscience research \cite{Baden2016functional}, to mention a few examples.
	
	Let $A = [a_1, a_2, \dotsc, a_m]$ be an $n \times m$ data matrix, 
	where $n$ and  $m$ denote the numbers of features and samples, respectively.
	Without loss of generality, we assume that each row of $A$ has a zero mean. 
	Mathematically speaking, PCA finds an orthonormal basis 
	$Z \in \stiefel := \{Z \in \Rnp \mid Z\zz Z = I_p\}$ of a $p$-dimensional subspace
	such that the projection of samples $a_1, a_2, \dotsc, a_m$ on this subspace
	has the most variance.
	The set $\stiefel$ is usually referred to as the Stiefel manifold \cite{Stiefel1935}.
	Then PCA can be formally expressed as 
	the following optimization problem with the orthogonality constraints.
	\begin{equation}
		\label{eq:opt-pca}
		\begin{aligned}
			\min\limits_{Z \in \Rnp} \hspace{2mm} 
			& f (Z) := -\dfrac{1}{2} \tr \dkh{ Z\zz AA\zz Z } \\
			\st \hspace{3mm} & Z \in \stiefel,
		\end{aligned}
	\end{equation}
	where $\tr (\cdot)$ represents the trace of a given square matrix,
	and the column of $Z$ are called loading vectors or simply loadings.
	
	In the projected data $Z\zz A \in \bR^{p\times m}$, 
	the number of features is reduced from $n$ to $p$ 
	and each feature (row of $Z\zz A$) is a linear combination 
	of the original features (rows of $A$) with coefficients from $Z$.  
	For a sufficiently sparse $Z$, each reduced feature depends only on 
	a few original features instead of all of them, 
	leading to better interpretability in many applications.
	For this purpose, we consider the following 
	$\ell_1$-regularized optimization model for sparse PCA:
	\begin{equation}
		\label{eq:opt-spca-l1}
		\begin{aligned}
			\min\limits_{Z \in \Rnp} \hspace{2mm} 
			& \bar{f} (Z) := f (Z) + r (Z) \\
			\st \hspace{3mm} & Z \in \stiefel,
		\end{aligned}
	\end{equation}
	where the (non-operator) $\ell_1$-norm regularizer
	$r (Z) := \mu \norm{Z}_1 = \mu \sum_{i, j} \vert [Z]_{ij} \vert$
	is imposed to promote sparsity in $Z$,
	and $\mu > 0$ is the parameter used to control the amount of sparseness.
	Here, $[Z]_{ij}$ denotes the $(i, j)$-th entry of the matrix $Z$.
	Our distributed approach can efficiently tackle \eqref{eq:opt-spca-l1} with $p > 1$
	by pursuing sparsity and orthogonality at the same time.
	
	The optimization problem \eqref{eq:opt-spca-l1} is a penalized version 
	of the SCoTLASS model proposed in \cite{Jolliffe2003}.
	Evidently, there is a significant difficulty gap in going from the standard PCA 
	to sparse PCA in terms of both problem complexity and solution methodology.
	The standard PCA is polynomial-time solvable, 
	while the sparse PCA is NP-hard 
	if the $\ell_0$-regularizer is used to enforce sparsity \cite{Magdon2017np},
	though it is still unclear what is the computational time complexity of 
	solving the $\ell_1$-regularized model \eqref{eq:opt-spca-l1}.
	In this paper we will be content with a theoretical result of
	convergence to first-order stationary points of \eqref{eq:opt-spca-l1}.
	There are other formulations for sparse PCA,
	such as regression model \cite{Zou2006},
	semidefinite programming \cite{dAspremont2008,dAspremont2007},
	and matrix decompositions \cite{Shen2008,Witten2009}.
	A comparative study of these formulations is beyond the scope of this paper.
	We refer interested readers to \cite{Zou2018} for a recent survey of
	theoretical and computational results for sparse PCA.
	
\subsection{Distributed setting}

	We consider the following distributed setting.
	The data matrix $A$ is divided into $d$ blocks, each containing a number of samples; 
	namely, $A = [A_1, A_2, \dotsc, A_d]$  
	where $A_i \in \bR^{n\times m_i}$ so that $m_1 + \dotsb + m_d = m$.  
	This is a natural setting since each sample is a column of $A$.
	These submatrices $A_i$, $\iid$, are stored locally in $d$ locations,  
	possibly having been collected at different locations by different agents,
	and all the agents are connected through a communication network.
	According to the splitting scheme of \(A\),
	the function \(f (Z)\) can also be distributed into \(d\) agents, namely,
	\begin{equation}\label{def:sum_fi}
		f (Z) = \sumiid f_i (Z) 
		\mbox{~~with~~} 
		f_i (Z) = -\dfrac{1}{2} \tr \dkh{ Z\zz A_iA_i\zz Z }.
	\end{equation}

	In terms of network topology, we only need to assume that the network allows global summation 
	operations (say, through all-reduce type of communications \cite{Pacheco2011})  which are 
	required by our algorithm.  In particular, our algorithm will operate well in the federated-learning 
	setting~\cite{Mcmahan2017communication} where all the agents are connected to 
	a center server so that global summations can be readily achieved in the network.
	To this extent, we say our algorithm is a centralized algorithm.
	
	For most distributed algorithms, since the amount of communications per iteration
	remains essentially the same, the total communication overhead is proportional 
	to the required number of iterations regardless of the underlying network topology.
	Therefore, our goal is to devise an algorithm that converges fast in terms of iteration counts, 
	while considerations on other network topologies and communication patterns are 
	beyond the scope of the current paper.

	In certain applications, 
	such as those in healthcare and financial industry  \cite{Lou2017,Zhang2018a}, 
	preserving privacy of local data is of primary importance.  
	In this paper, we consider the following scenario: 
	each agent $i$ wants to keep its local dataset (i.e., $A_iA_i\zz$) from being discovered 
	by any other agents including the center.  
	In this situation, it is not an option to implement a pre-agreed encryption 
	or a coordinated masking operation.
	For convenience, we will call such a privacy situation of {\em intrinsic privacy}.  
	For an algorithm to preserve intrinsic privacy, 
	publicly exchanged information must be safe 
	so that none of the local-data matrices can be figured out by any means. 
	We will show later that, in general, algorithms based on traditional methods 
	with distributed matrix multiplications are not intrinsically private.

\subsection{Related works}		

	Optimization problems with orthogonality constraints 
	have been actively investigated in recent decades, for which
	many algorithms and solvers have been developed, such as,  
	gradient approaches \cite{Manton2002,Nishimori2005,Abrudan2008,Absil2008},
	conjugate gradient approaches \cite{Edelman1998,Sato2016dai,Zhu2017riemannian},
	constraint preserving updating schemes \cite{Wen2013,Jiang2015},
	Newton methods \cite{Hu2018,Hu2019},
	trust-region methods \cite{Absil2006},
	multipliers correction frameworks \cite{Gao2018,Wang2021multipliers},
	and orthonormalization-free approaches \cite{Gao2019,Xiao2020}.
	These aforementioned algorithms are designed for smooth objective functions, and
	are generally not suitable for problem \eqref{eq:opt-spca-l1}.
	
	There exist some algorithms specifically designed for 
	solving non-smooth optimization problems with orthogonality constraints;
	most of which adopt certain non-smooth optimization techniques 
	to the Stiefel manifold; for instances,
	Riemannian subgradient methods \cite{Ferreira1998,Ferreira2019}, 
	proximal point algorithms \cite{Bacak2016}, 
	non-smooth trust-region methods \cite{Grohs2016}, 
	gradient sampling methods \cite{Hosseini2017},
	and proximal gradient methods \cite{Chen2020,Huang2021riemannian}.
	Out of these algorithms, proximal gradient methods are among the most efficient.
	Specifically, Chen et al. \cite{Chen2020} propose a manifold proximal gradient method (ManPG) 
	and its accelerated version ManPG-Ada for non-smooth optimization problems over the Stiefel manifold.
	Starting from a current iterate, say $Z^{(k)} \in \stiefel$,
	ManPG and ManPG-Ada generate the next iterate by solving the following 
	subproblem restricted to the tangent space
	$\tanst{Z^{(k)}} = \{ D \in \Rnp \mid D\zz Z^{(k)} + (Z^{(k)})\zz D = 0  \}$:
	\begin{equation}\label{eq:manpg-sub}
		\min\limits_{D \in \tanst{Z^{(k)}}} \hspace{2mm} 
		- \jkh{ \sumiid A_i A_i\zz Z^{(k)}, D } + \dfrac{1}{2\eta} \norm{D}\fs + r(Z^{(k)} + D),
	\end{equation}
	which consumes the most computational time in the algorithm.
	In ManPG~\cite{Chen2020}, the semi-smooth Newton (SSN) method \cite{Xiao2018regularized} is 
	deployed to solve the above subproblem, and global convergence to stationary points is established
	with the convergence rate $O (1 / \sqrt{k})$.
	Huang and Wei \cite{Huang2021riemannian} later extend the framework of ManPG to 
	general Riemannian manifolds beyond the Stiefel manifold.
	Another class of algorithms first introduces auxiliary variables
	to split the objective function and the orthogonality constraint
	and then utilizes alternating minimization techniques to solve the resulting model,
	including SOC \cite{Lai2014}, MADMM \cite{Kovnatsky2016madmm}, and PAMAL \cite{Chen2016}.
	It is worth mentioning that SOC and MADMM lack a convergence guarantee.
	As for PAMAL,  although convergence is guaranteed,
	its numerical performance is heavily dependent on its parameters
	as discussed in \cite{Chen2020}.
	
	In principle, all the aforementioned algorithms can be adapted to 
	the distributed setting considered in this paper. 
	We take the algorithm ManPG as an example.
	Under the distributed setting, each agent computes 
	the local product $A_i A_i\zz Z^{(k)}$ individually,
	then one round of communications will gather the global sum 
	$\sum_{i = 1}^d A_i A_i\zz Z^{(k)}$ at a center.   
	The center then solves subproblem \eqref{eq:manpg-sub}
	and scatters the updated $Z^{(k+1)}$ back to all agents.
	At each iteration, distributed computation is basically limited to 
	the matrix-multiplication level.
	
	We point out that, without prior data-masking, distributed algorithms at the 
	matrix-multiplication level cannot preserve local-data privacy intrinsically.  
	Specifically, local data $A_i A_i\zz$, privately owned by agent $i$, can be uncovered by 
	anyone who has access to publicly exchanged products $A_iA_i\zz Z^{(k)}$,
	after collecting enough such products and then solving a system of linear equations 
	for the ``unknown'' $A_iA_i\zz$.  This idea works even for more complicated 
	iterative procedures as long as a mapping from data $A_iA_i\zz$ to a publicly 
	exchanged quantity is deterministic and known.
	
	To illustrate this data leakage situation, we now present a simple experiment 
	on algorithm ManPG-Ada \cite{Chen2020}.
	The test environment well be described in Section \ref{sec:numerical-result}.
	As mentioned earlier, at iteration $k$ the publicly shared quantity in 
	ManPG-Ada by agent $i$ is $S_i^{(k)} := A_i A_i\zz Z^{(k)}$,
	where $Z^{(k)}$ is the $k$-th iterate of ManPG-Ada accessible to all agents. 
	For instance, to recover the unknown local data $A_1A_1\zz$ 
	we construct the following linear system with unknown $Y \in\Rnn$:
	\begin{equation}\label{eq:SLRP}
		Y[Z^{(1)}, \cdots, Z^{(k)}] = [S_1^{(1)}, \cdots, S_1^{(k)}].
	\end{equation}
	Let $Y^{(k)}$ be the minimum norm least-squares solution 
	to the linear equation \eqref{eq:SLRP} at iteration $k$.
	We perform an experiment to illustrate that $Y^{(k)}$ 
	will quickly converge to $A_1A_1\zz$,
	when ManPG-Ada is deployed to solve the sparse PCA problem 
	on a randomly generated test matrix 
	with $n = 100$ and $m = 1280$
	(other parameters are $d = 10$, $p = 10$, and $\mu = 0.05$).
	Figure \ref{fig:security} depicts how the reconstruction error 
	$\|Y^{(k)} - A_1 A_1^{\top}\|_{\mathrm{F}}$ 
	and the stationarity violation\footnote{
	Suppose $D^{(k)}$ is the solution to  \eqref{eq:manpg-sub}.
	According to Lemma 5.3 in \cite{Chen2020},
	$X^{(k)}$ is a first-order stationary point if $D^{(k)} = 0$.
	Therefore, the stationarity violation is defined as $\| D^{(k)} \|\ff$.}
	vary, on a logarithmic scale, as the number of iterations increases.
	We observe that local data $A_1A_1\zz$ is obtained with high accuracy
	much faster than solving the sparse PCA problem.
	
	\begin{figure}[h]
		\centering
		\includegraphics[width=0.5\linewidth]{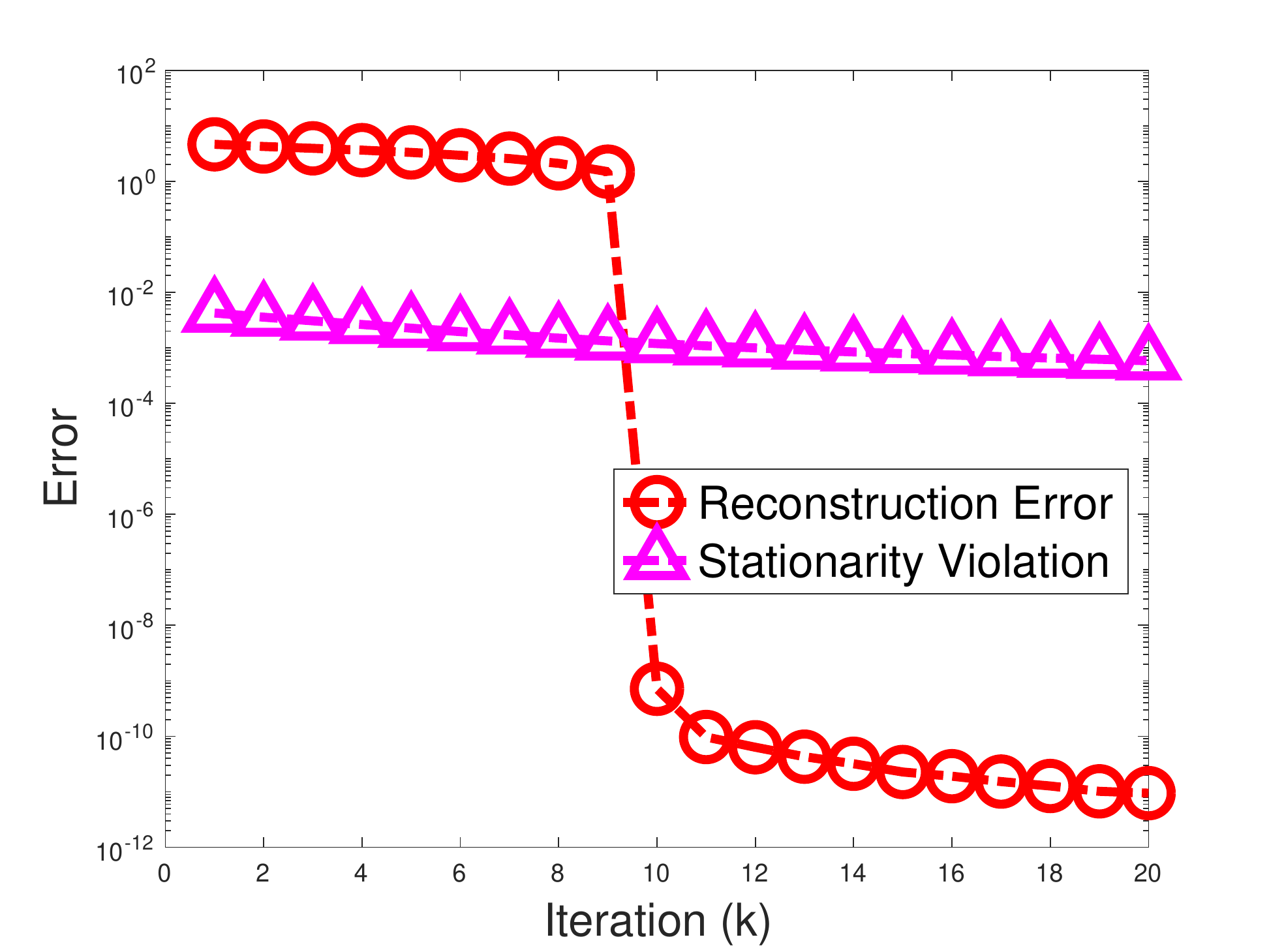}
		\caption{Local data uncovered by solving linear systems
			during ManPG-Ada iterations.}
		\label{fig:security}
	\end{figure}
	
	In order to handle distributed datasets, 
	some distributed ADMM-type algorithms have been introduced to 
	PCA and related problems.
	These methods achieve algorithm-level parallelization, which are generally more secure 
	than those based on matrix-multiplication-level parallelization.
	For instance, \cite{Hajinezhad2015} proposes a distributed algorithm for sparse PCA 
	with convergence to stationary points,
	but only studies the special case of $p = 1$.
	Moreover, under the distributed setting, \cite{Wang2020distributed} develop a subspace splitting strategy 
	to tackle the smooth optimization problem over the Grassmann manifold 
	without the $\ell_1$-norm regularizer in \eqref{eq:opt-spca-l1}.
	
	Recently, decentralized optimization has attracted attentions 
	partly due to its wide applications in wireless sensor networks.
	Only local communications are carried out in decentralized optimization,
	namely, agents exchange information only with their immediate neighbors.
	This is quite different from the distributed setting considered in the current paper.
	We refer interested readers to the references \cite{Gemp2020eigengame,Gang2021linearly,Gang2021fast,Andrade2021distributed,Ye2021,Chen2021decentralized,Wang2022decentralized}
	for some decentralized PCA algorithms.

\subsection{Main contributions}	

	Recently, a subspace splitting strategy has been proposed~\cite{Wang2020distributed} 
	to accelerate convergence of distributed algorithms for optimization problems over 
	Grassmann manifolds where objective functions are 
	orthogonal-transformation invariant\footnote{A function $f(X)$ is called 
	orthogonal-transformation invariant if $f (XO) = f(X)$ 
	for any $X \in \stiefel$ and $O \in \mathcal{S}_{p, p}$.} 
	and smooth.  In regularized problems such as sparse PCA, 
	orthogonal-transformation invariance and smoothness no longer hold.
	In this paper, we present a non-trivial extension of the
	subspace splitting strategy to more general optimization problems over Stiefel manifolds.
	{In particular, by incorporating the subspace splitting strategy into the framework of ManPG \cite{Chen2020}, 
	we have developed a distributed algorithm DSSAL1
	to solve the $\ell_1$-regularized optimization problem \eqref{eq:opt-spca-l1} for sparse PCA.}
	
	The main step of DSSAL1 is to solve the subproblem: 
	\begin{equation} \label{eq:DSSAL1-sub}
		\min\limits_{D \in \tanst{Z^{(k)}}} \hspace{2mm} 
		\jkh{ \sumiid Q_i^{(k)} Z^{(k)}, D } + \dfrac{1}{2\eta} \norm{D}\fs + r(Z^{(k)} + D),
	\end{equation}
	which is identical to subproblem~\eqref{eq:manpg-sub} in ManPG except that each local data matrix
	$A_iA_i\zz$ is replaced, or masked, by a matrix $-Q_i^{(k)}$ whose expression will be derived later. 
	
	In a sense, the main contributions of this paper can be attributed to this remarkably simple 
	replacement or masking, which brings two great benefits.
	Firstly, convergence will be significantly accelerated which will be shown
	empirically in Section \ref{sec:numerical-result}.
 	Secondly, publicly exchanged products $Q_i^{(k)}Z^{(k)}$ are no longer images of some 
	fixed and known mapping of $A_iA_i\zz$, making it practically impossible to uncover $A_iA_i\zz$ 
	from such products via equation-solving.

	Several innovations have been made in the development of our algorithms. 
	Firstly, in our algorithm, only the global variable can possibly converge to a solution, 
	while the local variables will never do.  The role of the local variables, 
	which are generally dense, is to help identify the right subspace.   Secondly, 
	we devise an inexact-solution strategy to effectively handle the difficult subproblem 
	for the global variable where orthogonality and sparsity are pursued simultaneously.
	Thirdly, we establish the global convergence to stationarity for our algorithm, 
	overcoming a number of technical difficulties associated with the rather complex 
	algorithm construction, as well as the non-convexity and non-smoothness in 
	problem \eqref{eq:opt-spca-l1}.

\subsection{Notations}

	The Euclidean inner product of two matrices $Y_1, Y_2$ 
	of the same size is defined as $\jkh{Y_1, Y_2}=\tr(Y_1\zz Y_2)$.
	The Frobenius norm and spectral norm of a given matrix $C$ 
	are denoted by $\norm{C}\ff$ and $\norm{C}_2$, respectively. 
	Given a differentiable function $g(X) : \Rnp \to \bR$, 
	the gradient of $g$ with respect to $X$ is denoted by $\nabla g(X)$.
	And the subdifferential of a Lipschitz continuous function $h(X)$ 
	is denoted by  $\partial h(X)$.
	The tangent space to the Stiefel manifold $\stiefel$ at $Z \in \stiefel$, 
	is represented by $\tanst{Z} = \{ Y \in \Rnp \mid Y\zz Z + Z\zz Y = 0  \}$, 
	and the orthogonal projection of $Y$ onto $\mathcal{T}_Z \stiefel$ is denoted by 
	$\projts{Z}{Y} = \dkh{I_n  -  ZZ\zz}Y + Z\dkh{Z\zz Y - Y \zz Z}/2$.
	For $X \in \stiefel$, we define $\Pv{X} := I_n - XX\zz$ standing for 
	the projection operator onto the null space of $X\zz$.
	Further notation will be introduced as it occurs.
	
\subsection{Organization}	

	The rest of this paper is organized as follows. 
	In Section \ref{sec:model}, we introduce a novel subspace-splitting model for sparse PCA,
	and investigate the structure of associated Lagrangian multipliers.
	Then a distributed algorithm is proposed to solve this model
	based on an ADMM-like framework in Section~\ref{sec:algorithmic-framework}.
	Moreover, convergence properties of the proposed algorithm are studied in Section \ref{sec:convergence-analysis}.
	Numerical experiments on a variety of test problems are presented in Section \ref{sec:numerical-result} 
	to evaluate the performance of the proposed algorithm.
	We draw final conclusions and discuss some possible future developments in the last section.
	
\section{Subspace-splitting model for sparse PCA}

\label{sec:model}

	We consider the scenario that the $i$-th component function $f_i (X)$ of $f$ in \eqref{def:sum_fi}
	can be evaluated only by the $i$-th agent since local dadaset $A_i$ is accessible only to the $i$-th agent. 
	In order to devise a distributed algorithm, the classic variable-splitting approach is to introduce 
	a set of local variables $\{X_i\}$ to make the sum of local functions nominally separable.
	Then a (centralized) distributed algorithm would maintain a global variable $Z$ 
	and impose variable-consensus constraints $X_i = Z$.
	
	Despite the regularizer term $r$ in \eqref{eq:opt-spca-l1}, all component functions $f_i(X)$ in 
	$f$ are still invariant under orthogonal transformations.  It should be natural for us to adapt the 
	subspace-splitting idea introduced in \cite{Wang2020distributed}, that is, to use
	the subspace-consensus constraints $X_i X_i\zz = Z Z\zz$ to accelerate convergence.
	In this paper, we propose to solve the following optimization problem:
	\begin{subequations}
		\label{eq:opt-ps}
		\begin{align}
			\label{eq:obj-ps}
			\min\limits_{ X_i, Z\in\Rnp } \hspace{2mm} & \sumiid f_i(X_i) + r(Z) \\
			\label{eq:con-orth-x}
			\st \hspace{11mm} & X_i\zz X_i = I_p,\hspace{8mm} \iid, \\
			\label{eq:con-subspace}
			& X_iX_i\zz = ZZ\zz, \hspace{3.5mm} \iid, \\
			\label{eq:con-orth-z}
			& Z\zz Z = I_p,
		\end{align}
	\end{subequations}
	which we will call the \textit{subspace-splitting} model for problem \eqref{eq:opt-spca-l1}, noting
	that both sides of \eqref{eq:con-subspace} are orthogonal projections onto subspaces.
	For brevity, we collect the global variable $Z$ and all local variables $\{X_i\}$ into a point $\dkh{ Z, \{X_i\} }$.
	A point $\dkh{ Z, \{X_i\} }$ is feasible if it satisfies the constraints \eqref{eq:con-orth-x}-\eqref{eq:con-orth-z}.
	

\subsection{Stationarity conditions}
	
	In this subsection, we aim to present the stationarity conditions 
	of the sparse PCA problem \eqref{eq:opt-spca-l1}.
	We first introduce the definition of Clarke subgradient \cite{Clarke1990} 
	for non-smooth functions.
	
	\begin{definition}
		Suppose $f: \Rnp \to \bR$ is a Lipschitz continuous function.
		The generalized directional derivative of $f$ at the point $X \in \Rnp$
		along the direction $H \in  \Rnp$ is defined by:
		\begin{equation*}
			f^{\circ} (X; H) := \limsup\limits_{Y \to X,\, t \to 0^+} \dfrac{f (Y + t H) - f(Y)}{t}.
		\end{equation*}
		Based on generalized directional derivative of $f$,
		the (Clark) subgradient of $f$ is defined by:
		\begin{equation*}
			\partial f(X) := \{G \in \Rnp \mid \jkh{G, H} \leq f^{\circ} (X; H) \}.
		\end{equation*}
	\end{definition}
	
	As discussed in \cite{Yang2014,Chen2020},
	the first-order stationarity condition of \eqref{eq:opt-spca-l1} can be stated as:
	\begin{equation*}\label{eq:kkt-ps}
		0 \in \projts{Z}{-A A\zz Z + \partial r(Z)}.
	\end{equation*}
	We provide an equivalent description of the above first-order stationarity condition,
	which will be used in the theoretical analysis.
	
	\begin{lemma}\label{le:kkt}
		A point $Z \in \stiefel$ is a first-order stationary point of \eqref{eq:opt-spca-l1}
		if and only if there exists $R(Z) \in \partial r(Z)$ such that the following conditions hold:
		\begin{equation}\label{eq:kkt}
			\left\{
			\begin{aligned}
				\Pv{Z} \dkh{- A A\zz Z + R(Z)} = 0, \\
				Z\zz R(Z) - R(Z)\zz Z = 0.
			\end{aligned}
			\right.
		\end{equation}
	\end{lemma}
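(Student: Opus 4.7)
The plan is to start from the known first-order stationarity condition cited from \cite{Yang2014,Chen2020}, namely that $Z\in\stiefel$ is stationary iff there exists $R(Z)\in\partial r(Z)$ with
$\projts{Z}{-AA\zz Z+R(Z)}=0$, and then unpack the explicit formula $\projts{Z}{Y}=\Pv{Z}Y+\tfrac{1}{2}Z(Z\zz Y-Y\zz Z)$ given in the notation subsection. Writing $G:=-AA\zz Z+R(Z)$, I would split $\projts{Z}{G}$ into its ``horizontal'' part $\Pv{Z}G$ and its ``vertical'' part $\tfrac{1}{2}Z(Z\zz G-G\zz Z)$, and argue that these two components live in mutually orthogonal subspaces, so the sum vanishes if and only if both summands vanish individually.

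More concretely, the first step is to check that $Z\zz(\Pv{Z}G)=0$ since $Z\zz\Pv{Z}=Z\zz(I_n-ZZ\zz)=0$, while the vertical component lies in $\mathrm{range}(Z)$. Hence, left-multiplying $\projts{Z}{G}=0$ by $Z\zz$ forces $Z\zz G-G\zz Z=0$ (using that $Z\zz Z=I_p$ and that this expression is skew-symmetric), and subtracting the vertical piece off then forces $\Pv{Z}G=0$. The reverse implication is immediate by plugging back into the formula for $\projts{Z}{\cdot}$. Thus $\projts{Z}{G}=0$ is equivalent to the pair of conditions $\Pv{Z}G=0$ and $Z\zz G-G\zz Z=0$.

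The second step is to simplify the skew-symmetry condition. Expanding
\[
Z\zz G-G\zz Z=-Z\zz AA\zz Z+Z\zz R(Z)+Z\zz AA\zz Z-R(Z)\zz Z=Z\zz R(Z)-R(Z)\zz Z,
\]
where the two $Z\zz AA\zz Z$ terms cancel because $Z\zz AA\zz Z$ is symmetric. Combining the two observations yields precisely the system \eqref{eq:kkt} in the statement, completing both directions of the equivalence.

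I do not expect any serious obstacle here; the argument is essentially a direct algebraic manipulation of the tangent-space projector. The only subtle point is the orthogonal-decomposition argument in the first step, where one must carefully exploit $Z\zz Z=I_p$ together with the skew-symmetry of $Z\zz G-G\zz Z$ to conclude that the horizontal and vertical pieces vanish separately. Everything else is routine simplification using the symmetry of $AA\zz$ and the definition of $\Pv{Z}$.
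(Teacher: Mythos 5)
Your proposal is correct and follows essentially the same route as the paper: both decompose $\projts{Z}{-AA\zz Z+R(Z)}$ into the $\Pv{Z}$-component and the $Z(Z\zz G-G\zz Z)/2$-component, use their mutual orthogonality (the paper phrases this as a Pythagorean identity for the squared Frobenius norm) to conclude each must vanish separately, and cancel the symmetric $Z\zz AA\zz Z$ terms to reduce the skew-symmetry condition to $Z\zz R(Z)-R(Z)\zz Z=0$. No gaps.
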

	
	The proof of Lemma \ref{le:kkt} is put into Appendix \ref{apx:first-order}.
	Then we can characterize the first-order stationary points of \eqref{eq:opt-ps} 
	in the following manner.
	
	\begin{definition}\label{def:kkt-ps}
		Suppose $X_i\in\Rnp(\iid)$ and $Z\in\Rnp$.
		A point $(Z, \{X_i\})$ is called a first-order stationary point of \eqref{eq:opt-ps}
		if it is feasible and $Z$ satisfies the conditions in \eqref{eq:kkt}.
	\end{definition}

\subsection{Existence of low-rank multipliers}

\label{subsec:ex-low}

	By associating dual variables $\Gamma_i$, $\Lambda_i$, and $\Theta$ 
	to the constraints \eqref{eq:con-orth-x}, \eqref{eq:con-subspace}, 
	and \eqref{eq:con-orth-z}, respectively,
	we derive an equivalent description of first-order stationarity conditions of \eqref{eq:opt-ps}.
	
	\begin{proposition}\label{prop:kkt-multipliers}
		Suppose $X_i\in\Rnp(\iid)$ and $Z\in\Rnp$.
		A point $(Z, \{X_i\})$ is a first-order stationary point of \eqref{eq:opt-ps}
		if and only if there exist symmetric matrices 
		$\Lambda_i\in\Rnn$, $\Gamma_i\in\Rpp$, and $\Theta\in\Rpp$
		such that $(Z, \{X_i\})$ satisfies the following condition:
		\begin{equation}\label{eq:kkt-multipliers}
			\left\{
			\begin{aligned}
				0 & = A_i A_i\zz X_i + X_i \Gamma_i + \Lambda_i X_i, & \iid, \\
				0 & \in \partial r(Z) + \sumiid \Lambda_i Z - Z \Theta, \\
				0 & = X_i\zz X_i-I_p, &\iid, \\
				0 & = X_iX_i\zz-ZZ\zz, & \iid, \\
				0 & = Z\zz Z - I_p.
			\end{aligned}
			\right.
		\end{equation}
	\end{proposition}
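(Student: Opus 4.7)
The plan is to prove both directions of the equivalence by using Lemma \ref{le:kkt} as a bridge: on one side, \eqref{eq:kkt} is a first-order condition stated purely in terms of $Z$ and a single subgradient $R(Z)\in\partial r(Z)$; on the other, \eqref{eq:kkt-multipliers} is the classical KKT system for the Lagrangian associated with \eqref{eq:opt-ps}. The key structural observation throughout is that feasibility forces $X_i$ and $Z$ to be orthonormal bases of the same $p$-dimensional subspace, so there exists an orthogonal $O_i\in\Rpp$ with $X_i = ZO_i$, and in particular $\Pv{X_i} = \Pv{Z}$ and $X_iX_i\zz Z = Z$.

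For the ``if'' direction, I would substitute $X_i = ZO_i$ into the first equation of \eqref{eq:kkt-multipliers}, right-multiply by $O_i\zz$, and sum over $i$. Folding $\sum_i O_i\Gamma_iO_i\zz$ into a single symmetric matrix $G$, this yields $\sum_i \Lambda_iZ = -AA\zz Z - ZG$. Picking $R(Z)\in\partial r(Z)$ from the inclusion in \eqref{eq:kkt-multipliers} gives $R(Z) = Z\Theta - \sum_i \Lambda_iZ = AA\zz Z + Z(G+\Theta)$. Applying $\Pv{Z}$ annihilates the $Z(\cdot)$ term and produces the first identity in \eqref{eq:kkt}; meanwhile $Z\zz R(Z) = Z\zz AA\zz Z + G + \Theta$ is manifestly symmetric, giving the second identity. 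Lemma \ref{le:kkt} then yields the claim.

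For the ``only if'' direction, I would construct the multipliers explicitly by guessing a symmetric splitting of $A_iA_i\zz X_i$ into a tangential piece absorbed by $\Gamma_i$ and a normal piece absorbed by $\Lambda_i$. Concretely, set $\Gamma_i = -X_i\zz A_iA_i\zz X_i$, $\Lambda_i = -\Pv{X_i}A_iA_i\zz X_iX_i\zz - X_iX_i\zz A_iA_i\zz \Pv{X_i}$, and $\Theta = Z\zz R(Z)$; all three are symmetric, with symmetry of $\Theta$ coming from the second equation in \eqref{eq:kkt}. The first equation of \eqref{eq:kkt-multipliers} then reduces, using $X_i\zz X_i = I_p$ and $\Pv{X_i}X_i = 0$, to $\Pv{X_i}A_iA_i\zz X_i - \Pv{X_i}A_iA_i\zz X_i = 0$. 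Exploiting $\Pv{X_i}=\Pv{Z}$ and $X_iX_i\zz Z = Z$, a short calculation gives $\sum_i \Lambda_iZ = -\Pv{Z}AA\zz Z$, which by the first identity of \eqref{eq:kkt} equals $-\Pv{Z}R(Z) = -R(Z) + ZZ\zz R(Z) = -R(Z) + Z\Theta$; this is exactly the required inclusion.

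The main obstacle is the ``only if'' direction: the scalar-style condition \eqref{eq:kkt} constrains only $Z$ and one subgradient $R(Z)$, while \eqref{eq:kkt-multipliers} postulates $d+1$ dual matrices of two different sizes, and one must guess their form. The unlocking idea is to decompose $A_iA_i\zz X_i$ into its tangent and normal components at $X_i$ on $\stiefel$ and place them into $\Gamma_i$ and $\Lambda_i$ respectively; then the identity $\Pv{X_i}=\Pv{Z}$ makes the sum of the $d$ local equations collapse into a single equation involving only the averaged data $AA\zz$ and $Z$, which matches the shape of \eqref{eq:kkt}.
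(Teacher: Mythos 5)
Your proposal is correct and follows essentially the same route as the paper: the ``only if'' direction uses exactly the paper's explicit multipliers $\Gamma_i=-X_i\zz A_iA_i\zz X_i$, $\Lambda_i=-\Pv{X_i}A_iA_i\zz X_iX_i\zz-X_iX_i\zz A_iA_i\zz\Pv{X_i}$, $\Theta=Z\zz R(Z)$, and both directions pass through Lemma \ref{le:kkt}. The only (harmless) difference is in the ``if'' direction, where you eliminate the $\Gamma_i$ via the orthogonal factors $O_i=Z\zz X_i$ and absorb them into a symmetric $G$, whereas the paper annihilates them by applying $\Pv{X_i}(\cdot)X_i\zz$ to the first equation before summing.
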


	The proof of Proposition \ref{prop:kkt-multipliers} is relegated to Appendix \ref{apx:kkt-ps}.
	Actually, the equations in \eqref{eq:kkt-multipliers} can be viewed as 
	the KKT conditions of \eqref{eq:opt-ps},
	while $\Gamma_i\in\Rpp$, $\Lambda_i\in\Rnn$, and $\Theta\in\Rpp$ 
	are the Lagrangian multipliers corresponding to the constraints 
	\eqref{eq:con-orth-x}, \eqref{eq:con-subspace}, and \eqref{eq:con-orth-z}, respectively.

	In \cite{Wang2020distributed}, a low-rank and closed-form multiplier is devised
	with respect to the subspace constraint \eqref{eq:con-subspace} for the PCA problems,
	which can be expressed as:
	\begin{equation}\label{eq:multiplier}
		\Lambda_i = - X_i X_i\zz A_i A_i\zz \Pv{X_i} - \Pv{X_i} A_i A_i\zz X_i X_i\zz \; (\iid).
	\end{equation}
	In Appendix \eqref{apx:kkt-ps},
	we further verify that the above formulation is also valid for the sparse PCA problems
	at any first-order stationary point $(Z, \{X_i\})$.
	In the next section, 
	we will use \eqref{eq:multiplier} to update the multipliers in our framework of ADMM.
	This strategy simultaneously saves computational costs and storage requirements.

\section{Algorithmic framework}

\label{sec:algorithmic-framework}

	Now we describe the proposed algorithm, based on an ADMM-like framework, 
	to solve the subspace-splitting model \eqref{eq:opt-ps}.
	Note that there are three constraints \eqref{eq:con-orth-x}-\eqref{eq:con-orth-z}. 
	We only penalize the subspace constraints \eqref{eq:con-subspace} to the objective function, 
	and obtain the corresponding augmented Lagrangian function:
	\begin{equation}
		\label{eq:lag-ps}
		\cL( Z, \{X_i\}, \{\Lambda_i\} ) = \sumiid \cL_i(Z, X_i, \Lambda_i) + r(Z), 
	\end{equation}
	where
	\begin{equation*}
		\begin{aligned}
			\cL_i (Z, X_i, \Lambda_i) 
			= {} & - \dfrac{1}{2} \tr \dkh{ X_i\zz A_i A_i\zz X_i} 
			- \dfrac{1}{2} \jkh{ \Lambda_i, X_iX_i\zz - ZZ\zz } \\
			& + \dfrac{\beta_i}{4} \norm{ X_iX_i\zz - ZZ\zz }\fs,
		\end{aligned}
	\end{equation*}
	and $\beta_i > 0 (\iid)$ are penalty parameters.
	Schematically, we will follow the ADMM-like framework below to build an algorithm for solving
	subspace-splitting model \eqref{eq:opt-ps}, though we quickly add that the two optimization subproblems 
	below in \eqref{eq:sub-z} and \eqref{eq:sub-x} are not ``solved" in a normal sense since we may stop after
	a single iteration of an iterative scheme.
	\begin{numcases}{}
		\label{eq:sub-z}
		Z^{(k+1)} \approx \argmin_{Z\in\stiefel} \;
		\cL ( Z, \{X_i^{(k)}\}, \{\Lambda_i^{(k)}\} ). \\
		\label{eq:sub-x}
		X_i^{(k+1)} \approx \argmin_{X_i \in \stiefel} \;
		\cL_i (Z^{(k+1)}, X_i, \Lambda_i^{(k)}), \;\; \iid. \\
		\label{eq:Wi}
		W_i^{(k+1)} = - \Pv{X_i^{(k+1)}} A_i A_i\zz X_i^{(k+1)}, \;\; \iid. \\
		\label{eq:ps-mult}
		{\Lambda}_i^{(k+1)} = X_i^{(k+1)}(W_i^{(k+1)})\zz +  W_i^{(k+1)}(X_i^{(k+1)})\zz, \;\; \iid.
	\end{numcases}
	In the above framework, the superscript $(k)$ counts the number of iterations, 
	and the subscript $i$ indicates the number of agents.
	
	A novel feature in our algorithm is the way to update the multipliers $\Lambda_i$
	associated with the subspace constraints \eqref{eq:con-subspace}.
	Generally speaking, in the augment Lagrangian based approach, 
	the multipliers are updated by the dual ascent step
	\begin{equation*}
		\Lambda_i^{(k+1)} 
		= \Lambda_i^{(k)} - \tau \beta_i \dkh{X_i^{(k+1)}(X_i^{(k+1)})\zz - Z^{(k+1)}(Z^{(k+1)})\zz},
	\end{equation*}
	where $\tau > 0$ is the step size.  This standard method would require to store 
	and update an $n \times n$ matrix at each agent, which could be costly when $n$ is large.
	Instead, we use the low-rank updating formulas \eqref{eq:Wi}-\eqref{eq:ps-mult} based on
	the closed-form expression \eqref{eq:multiplier} derived in Section \ref{subsec:ex-low}. 
	In our iterative setting, these multiplier matrices are never stored but used in 
	matrix multiplications, in which the required additional storage for agent $i$ is
	for the $n \times p$ matrix $W_i$.
	
\subsection{Subproblem for global variable}

\label{subsec:sub-z}

	We now describe how to approximate subproblems \eqref{eq:sub-z} and \eqref{eq:sub-x}.
	By rearrangments, subproblem \eqref{eq:sub-z} reduces to:
	\begin{equation}\label{eq:ps-sub-z}
		\min\limits_{Z \in \stiefel} \hspace{2mm}  q^{(k)} (Z) := \dfrac{1}{2} \tr \dkh{ Z\zz Q^{(k)} Z} + r(Z)
	\end{equation}
	where $Q^{(k)} = \sum_{i = 1}^d Q_i^{(k)}$ and $Q_i^{(k)}$ is an $n \times n$ matrix defined by
	\begin{equation}
		Q_i^{(k)} =  \Lambda_i^{(k)} - \beta_i X_i^{(k)}(X_i^{(k)})\zz.
		\label{eq:Qik}
	\end{equation}
	We quickly point out here that it is not necessary to construct and store these $Q$-matrices explicitly 
	since we will only use them to multiply $n \times p$ matrices in an iterative scheme to obtain 
	approximate solutions to subproblem \eqref{eq:ps-sub-z}.  More details will follow later.

	Apparently, subproblem \eqref{eq:ps-sub-z} 
	pursues the orthogonality and sparsity simultaneously, and 
	is not easier to solve than the the original problem \eqref{eq:opt-spca-l1}.
	However, we will demonstrate later by both theoretical analysis and numerical 
	experiments that inexactly solving \eqref{eq:ps-sub-z} by conducting
	one proximal gradient step 
	is adequate for the global convergence.
	
	Starting from the current iterate $Z^{(k)}$,
	we first find a decent direction $D^{(k)}$ restricted to the tangent space $\tanst{Z^{(k)}}$ 
	by solving the following subproblem
	\begin{equation}\label{eq:ps-sub-z-pg}
		\begin{aligned}
			\min\limits_{D \in \Rnp} \hspace{2mm} 
			& \jkh{ Q^{(k)} Z^{(k)}, D } + \dfrac{1}{2\eta} \norm{D}\fs + r(Z^{(k)} + D) \\
			\st\hspace{3mm} & D\zz Z^{(k)} + (Z^{(k)})\zz D = 0,
		\end{aligned}
	\end{equation}
	where $\eta > 0$ is the step size.
	Since $Z^{(k)} + D^{(k)}$ does not necessarily lie on the Stiefel manifold $\stiefel$,
	we then perform a projection to bring it back to $\stiefel$,
	which can be represented as:
	\begin{equation*}
		Z^{(k+1)} = \proj_{\stiefel} \dkh{Z^{(k)} + D^{(k)}}.
	\end{equation*}
	Here, the orthogonal projection of a matrix $C \in \Rnp$ onto $\stiefel$ 
	is denoted by $\proj_{\stiefel} \dkh{C} = U_C V_C\zz$,
	where $U_C\Sigma_CV_C\zz$ is the economic form of the singular value decomposition of $C$.
	
	\begin{remark}
		We note that the subproblems solved by ManPG \cite{Chen2020} 
		are identical in form to our subproblem \eqref{eq:ps-sub-z-pg} 
		but with $Q^{(k)}$ replaced by the data matrix $AA\zz$.  
		That is, ManPG applies manifold proximal gradient steps to a fixed problem, 
		while our algorithm computes steps of the same type using a sequence of matrices, 
		each being updated to incorporate latest information.  
		From this point of view, our algorithm can be interpreted as an acceleration scheme 
		to reduce the number of outer-iterations, thereby reducing communication overheads.
	\end{remark}
	
	Since these $n \times n$ matrices $Q_i^{(k)} (\iid)$ are distributively maintained in $d$ agents,
	each agent is not able to independently solve subproblem \eqref{eq:ps-sub-z-pg}.
	Fortunately, we only need to calculate 
	\begin{equation}
		\label{eq:QkZk}
		Q^{(k)} Z^{(k)} = \sumiid Q_i^{(k)} Z^{(k)},
	\end{equation} 
	to solve this subproblem.
	In the distributed setting, the right hand side of \eqref{eq:QkZk} 
	can be accomplished by calculating $Q_i^{(k)} Z^{(k)}$
	in each agent and then invoking the all-reduce type of communication,
	where each agent just shares one $n \times p$ matrix.
	If the all-reduce type of communication is realized by the butterfly algorithm \cite{Pacheco2011},
	the communication overhead per iteration is $O\dkh{np\log(d)}$. 
	Furthermore, each local product $Q_i^{(k)} Z^{(k)}$ can be computed from
	\begin{equation} \label{eq:QikZk}
		\begin{aligned}
			Q_i^{(k)} Z^{(k)} 
			= {} & \Lambda_i^{(k)} Z^{(k)} - \beta_i X_i^{(k)}(X_i^{(k)})\zz Z^{(k)} \\
			= {} & X_i^{(k)}(W_i^{(k)})\zz Z^{(k)} +  W_i^{(k)}(X_i^{(k)})\zz Z^{(k)} \\
			& - \beta_i X_i^{(k)}(X_i^{(k)})\zz Z^{(k)},
		\end{aligned}
	\end{equation}
	with a computational cost in the order of $O (np^2)$.   From the above formula, 
	one observes that the $n \times n$ matrices $Q_i$ need not be stored explicitly.


	Now we consider how to efficiently solve subproblem \eqref{eq:ps-sub-z-pg}.
	By associating a multiplier $\Upsilon \in \Rpp$ to the linear equality constraint, 
	the Lagrangian function of \eqref{eq:ps-sub-z-pg} can be written as:
	\begin{equation*}
		\begin{aligned}
			\mathfrak{L} (D, \Upsilon) 
			= {} & \jkh{ Q^{(k)} Z^{(k)}, D } + \dfrac{1}{2\eta} \norm{D}\fs + r(Z^{(k)} + D) \\
			& - \dfrac{1}{2} \jkh{\Upsilon, D\zz Z^{(k)} + (Z^{(k)})\zz D }.
		\end{aligned}
	\end{equation*}
	We choose to apply the Uzawa method \cite{Arrow1958} to solve subproblem \eqref{eq:ps-sub-z-pg}.
	At the $j$-th inner iteration, 
	we first minimize the above Lagrangian function with respect to $D$ for a fixed $\Upsilon=\Upsilon(j)$:
	\begin{equation}
		\label{eq:uzawa-primal}
		\begin{aligned}
			D(j+1) = {} & \argmin\limits_{D \in \Rnp} \mathfrak{L} ( D, \Upsilon(j) ) \\
			= {} & \prox{\eta r}{Z^{(k)} - \eta \dkh{Q^{(k)} Z^{(k)} - Z^{(k)} \Upsilon(j)}} - Z^{(k)},
		\end{aligned}
	\end{equation}
	where $D(j)$ and $\Upsilon(j)$ denote the $j$-th inner iterate of $D$ and $\Upsilon$, respectively.
	Here, we use $\prox{g}{X}$ to denote the proximal mapping 
	of a given function $g: \Rnp \to \bR$ at the point $X \in \Rnp$, 
	which is defined by:
	\begin{equation*}
		\prox{g}{X} = \argmin\limits_{Y \in \Rnp} \hspace{2mm} g(Y) + \dfrac{1}{2} \norm{Y - X}\fs.
	\end{equation*}
	For the $\ell_1$-norm regularizer term $r(X) = \mu \norm{X}_1$,
	the proximal mapping in \eqref{eq:uzawa-primal} admits a closed-form solution:
	\begin{equation*}
		\fkh{ \prox{\eta r}{X} }_{ij} = 
		\left\{
		\begin{array}{ll}
			\fkh{X}_{ij} - \eta \mu, & \mbox{~if~} \fkh{X}_{ij} > \eta \mu, \\
			0, & \mbox{~if~} - \eta \mu \leq \fkh{X}_{ij} \leq \eta \mu,\\
			\fkh{X}_{ij} + \eta \mu, & \mbox{~if~} \fkh{X}_{ij} < - \eta \mu,
		\end{array}
		\right.
	\end{equation*}
	where the subscript $[\, \cdot \,]_{ij}$ represents the $(i, j)$-th entry of a matrix.
	Then the multiplier is updated by a dual ascent step:
	\begin{equation}
		\label{eq:uzawa-dual}
		\Upsilon(j+1) = \Upsilon(j) - \tau \dkh{D(j+1)\zz Z^{(k)} + (Z^{(k)})\zz D(j+1)},
	\end{equation}
	where $\tau > 0$ is the step size.
	These two steps are repeated until convergence.
	The complete framework is summarized in Algorithm \ref{alg:fixed-point}.

	The Uzawa method can be viewed as a special case of the primal-dual hybrid gradient algorithm (PDHG)
	developed in \cite{He2014} with the convergence rate $O(1 / k)$ in the ergodic sense under mild conditions.
	Moreover, as an inner solver it can bring a higher overall efficiency than the SSN method used by 
	ManPG \cite{Chen2020} (see \cite{Xiao2021penalty} for a recent study).

	\begin{algorithm2e}[h]
		\label{alg:fixed-point}
		\caption{Uzawa method for subproblem \eqref{eq:ps-sub-z-pg}.} 
		
		
		\KwIn{ $Z^{(k)}$, $Q^{(k)} Z^{(k)}$, and $\eta$ 
			in subproblem \eqref{eq:ps-sub-z-pg}.}
		
		Set $j := 0$, and choose the step size $\tau > 0$ as well as the initial variable $\Upsilon(0)$. 
		
		\While{not converged}
		{
			Compute $D(j+1)$ by \eqref{eq:uzawa-primal}.
			
			Update  $\Upsilon(j+1)$ by \eqref{eq:uzawa-dual}.
			
			Set $j := j + 1$. 
		}
		
		\KwOut{$D(j)$.}	
	\end{algorithm2e}

\subsection{Subproblems for local variables}

\label{subsec:sub-x}

	In this subsection, we focus on  for the $i$-th local variable $X_i$, 
	which can be rearranged as the following equivalent problem:
	\begin{equation}\label{eq:ps-sub-x}
		\min\limits_{X_i \in \stiefel} \hspace{2mm} h_i^{(k)} (X_i) 
		:= - \dfrac{1}{2} \tr \dkh{ X_i\zz H_i^{(k)} X_i }.
	\end{equation}
	Here, $H_i^{(k)}$ is an $n \times n$ real symmetric matrix:
	\begin{equation}\label{eq:Hik}
		H_i^{(k)} = A_i A_i\zz + \Lambda_i^{(k)} + \beta_i Z^{(k + 1)}(Z^{(k + 1)})\zz,
	\end{equation}
	which is only related to the local data $A_i$.
	This is a standard eigenvalue problem where one needs to compute
	a $p$-dimensional dominant eigenspace of $H_i^{(k)}$.
	
	As a subproblem, it is not necessary to solve \eqref{eq:ps-sub-x} to high precision.
	In practice, we just need to find a point $X_i^{(k+1)} \in \stiefel$
	satisfying the following two conditions,
	which suffices to be a good inexact solution empirically, 
	and to guarantee the global convergence of the whole algorithm.
	The first condition demands a sufficient decrease in function value:
	\begin{equation}
		\label{eq:ps-sub-x-con-1}
		h_i^{(k)} \left( X_i^{(k)} \right) - h_i^{(k)} \left( X_i^{(k+1)} \right) 
		\geq \dfrac{c_i}{c_i^{\prime}\norm{A_i}_2^2 + \beta_i} 
		\norm{ \Pv{X_i^{(k)}} H_i^{(k)} X_i^{(k)} }\fs,
	\end{equation}
	where $c_i > 0$ and $c_i^{\prime} > 0$ are two constants independent of $\beta_i$.
	The second condition is a sufficient decrease in KKT violation:
	\begin{equation}\label{eq:ps-sub-x-con-2}
		\norm{ \Pv{X_i^{(k + 1)}} H_i^{(k)} X_i^{(k+1)} }\ff 
		\leq \delta_i \norm{ \Pv{X_i^{(k)}} H_i^{(k)} X_i^{(k)} }\ff,
	\end{equation}
	where $\delta_i \in [ 0, 1 )$ is a constant independent of $\beta_i$.
	It turns out that these two rather weak termination conditions for subproblem~\eqref{eq:ps-sub-x}
	are sufficient for us to derive global convergence of our ADMM-like algorithm framework
	\eqref{eq:sub-z}-\eqref{eq:ps-mult}.

	In practice, we can combine a warm-start strategy with a single iteration of SSI \cite{Rutishauser1970} 
	to generate the next iterate $X_i^{(k + 1)}= \proj_{\stiefel} \dkh{H_i^{(k)} X_i^{(k)}}$, i.e.,
	\begin{equation} \label{eq:Xik+1}
		\begin{aligned}
			X_i^{(k + 1)} 
			& = \proj_{\stiefel} \dkh{A_i A_i\zz X_i^{(k)} + \Lambda_i^{(k)} X_i^{(k)}
			+ \beta_i Z^{(k + 1)}(Z^{(k + 1)})\zz X_i^{(k)}} \\
			& = \proj_{\stiefel} \dkh{A_i A_i\zz X_i^{(k)} + W_i^{(k)}
			+ \beta_i Z^{(k + 1)}(Z^{(k + 1)})\zz X_i^{(k)}},
		\end{aligned}
	\end{equation}
	which can be computed in the order of $O(np^2)$ floating-point operations,
	given that the term $A_i A_i\zz X_i^{(k)}$ is inherited from the last iteration as
	a result of updating $W_i^{(k)}$, see \eqref{eq:Wi}.

\subsection{Algorithm description}

\label{subsec:framework}

	We formally present the detailed algorithmic framework as Algorithm \ref{alg:DSSAL1} below,
	named \textit{\underline{d}istributed \underline{s}ubspace \underline{s}plitting 
	\underline{a}lgorithm with \underline{$\ell_1$} regularization} 
	and abbreviated to DSSAL1.
	In the distributed environment, 
	all agents are initiated from the same point $Z^{(0)} \in \stiefel$.
	And the initial guess of multipliers are computed by \eqref{eq:ps-mult}.
	After initialization, all agents first solve the common subproblem for $Z$ 
	collaboratively by certain communication strategy.
	Then each agent solves its subproblem for $X_i$ 
	and updates its multiplier $\Lambda_i$.
	These two steps only involve the local data privately stored at each agent,
	and hence can be carried out in $d$ agents concurrently.
	This procedure is repeated until convergence.

	\begin{algorithm2e}[h]
		\label{alg:DSSAL1}
		\caption{Distributed Subspace Splitting Algorithm with $\ell_1$ regularization (DSSAL1).} 
		
		
		\KwIn{functions $f_i(\iid)$ and $r$.}
		
		Set $k := 0$, choose penalty parameters $\hkh{ \beta_i }$, and initialize $Z^{(0)}$.
		
		Set $X_i^{(0)} = Z^{(0)}$ for $\iid$.
		
		Compute the initial multipliers $\{\Lambda_i^{(0)}\}$ by \eqref{eq:ps-mult}.
		
		\While{not converged} 
		{
			Solve  \eqref{eq:ps-sub-z-pg} to obtain $D^{(k)}$ by Algorithm \ref{alg:fixed-point}.
			
			Set $Z^{(k+1)} = \proj_{\stiefel}\dkh{Z^{(k)} + D^{(k)}}$.
			
			\For{$i=1,\dotsc,d$}
			{
				Find $X_i^{(k+1)} \in \stiefel$ satisfying \eqref{eq:ps-sub-x-con-1} and \eqref{eq:ps-sub-x-con-2}.
				
				Update the multipliers $\Lambda_i^{(k+1)}$ by \eqref{eq:ps-mult}.
			}
			
			
			Set $k := k + 1$.
		}
		
		\KwOut{$Z^{(k)}$.}
		
	\end{algorithm2e}

\subsection{Data privacy}

\label{subsec:privacy}
	
	We claim that DSSAL1 can naturally protect the intrinsic privacy of local data.
	To form the global sum in \eqref{eq:QkZk}, the shared information in DSSAL1 
	at iteration $k$ from the $i$-th agent is $S_i^{(k)} := Q_i^{(k)} Z^{(k)}$ where
	$Z^{(k)}$ is known to all agents.  However, the $n\times p$ system of equations,
	$Q_i^{(k)} Z^{(k)}=S_i^{(k)}$, is insufficient for obtaining the $n\times n$ mask matrix 
	$Q_i^{(k)}$ which changes from iteration to iteration.  Secondly, 
	even if a few mask matrices $Q_i^{(k)}$ were unveiled, 
	it would still be impossible to derive the local data matrix $A_i A_i\zz$ from these
	$Q_i^{(k)}$ without knowing corresponding $X_i^{(k)}$ (and $\beta_i$) which are 
	always kept privately by the $i$-th agent.  Finally, consider the ideal ``converged" 
	case where $X_iX_i\zz=ZZ\zz$ held at iteration $k$, and $\beta_i$ were known. 
	In this case,  $Q_i^{(k)}$ would be a known linear function of $A_iA_i\zz$
	parameterized by $Z^{(k)}$.  Still, the $n\times p$ system $Q_i^{(k)}Z^{(k)}=S_i^{(k)}$
	would not be sufficient to uncover the $n\times n$ local data matrix $A_iA_i\zz$
	(strictly speaking, one only needs to recover $n(n+1)/2$ entries since $A_iA_i\zz$ is symmetric).
	Based on this discussion, we call DSSAL1 a privacy-aware method.

\subsection{Computational cost}

	We conclude this section by discussing the computational cost of our algorithm 
	per iteration.  We first compute the matrix multiplication $Q^{(k)} Z^{(k)}$ 
	by \eqref{eq:QkZk} and \eqref{eq:QikZk},
	whose computational cost for each agent is $O (np^2)$ as mentioned earlier.
	Then, at the center, the Uzawa method is applied to solving subproblem 
	\eqref{eq:ps-sub-z-pg} which has a per-iteration complexity $O (np^2)$.
	In practice, it usually takes very few iterations to generate $Z^{(k + 1)}$.
	Next, each agent uses a single iteration of SSI to generate $X_i^{(k + 1)}$ by \eqref{eq:Xik+1}
	with the computational cost $O (np^2)$ as discussed before.
	Finally, agent $i$ updates $W_i^{(k+1)}$ by \eqref{eq:Wi}
	with the computational cost $4npm_i + O(n p^2)$ 
	(which represents the multiplier matrix $\Lambda_i^{(k+1)}$ implicitly).
	Overall, for each agent, the computational cost of our algorithm 
	is $4npm_i + O(n p^2)$ per iteration.  At the center, the computational cost 
	for approximately solving \eqref{eq:ps-sub-z-pg} is, empirically speaking, $O (n p^2)$.

\section{Convergence analysis}

\label{sec:convergence-analysis} 

	In this section, we analyze the global convergence of Algorithm \ref{alg:DSSAL1}
	and prove that a sequence $\{Z^{(k)}\}$ generated by Algorithm \ref{alg:DSSAL1} 
	has at least one accumulation point, and any accumulation point is a first-order 
	stationary point.  A global, sub-linear convergence rate is also established.
	
	We start with a property that a feasible point is first-order stationary 
	if no progress can be made by solving  \eqref{eq:ps-sub-z-pg}.

	\begin{lemma}
		\label{le:optimality}
		Let $(Z^{(k)}, \{X_i^{(k)}\})$ be feasible.  Then $Z^{(k)}$ is a first-order stationary point of 
		the sparse PCA problem \eqref{eq:opt-spca-l1} if $D^{(k)} = 0$ is the minimizer of 
		subproblem \eqref{eq:ps-sub-z-pg}. 
	\end{lemma}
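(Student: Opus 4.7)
The plan is to combine the first-order optimality conditions of the constrained subproblem \eqref{eq:ps-sub-z-pg} with feasibility of $(Z^{(k)},\{X_i^{(k)}\})$, and then match the resulting identities to the two-part stationarity characterization in Lemma~\ref{le:kkt}. Write $Z = Z^{(k)}$, $X_i = X_i^{(k)}$, $W_i = W_i^{(k)}$, $\Lambda_i = \Lambda_i^{(k)}$ for brevity.

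First, I would derive the KKT conditions of \eqref{eq:ps-sub-z-pg} at $D = 0$. Since the linear equality $D\zz Z + Z\zz D = 0$ takes values in the space of symmetric $p \times p$ matrices, a symmetric multiplier $\Upsilon$ may be associated with it; computing $\partial_D \mathfrak{L}$ in the primal and setting $D = 0$ yields the existence of a symmetric $\Upsilon \in \Rpp$ and some $R(Z) \in \partial r(Z)$ with
\begin{equation*}
  Q^{(k)} Z + R(Z) - Z\,\Upsilon = 0.
\end{equation*}

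Next, I would exploit feasibility to simplify $Q^{(k)} Z$. Feasibility gives $X_i X_i\zz = ZZ\zz$ for every $i$, hence $\Pv{X_i} = \Pv{Z}$, and $X_i = Z O_i$ for some orthogonal $O_i \in \mathcal{S}_{p,p}$. Substituting into $W_i = -\Pv{X_i} A_i A_i\zz X_i$ and then into $\Lambda_i = X_i W_i\zz + W_i X_i\zz$, the $O_i$ factors cancel and
\begin{equation*}
  \Lambda_i = -ZZ\zz A_i A_i\zz \Pv{Z} - \Pv{Z} A_i A_i\zz ZZ\zz.
\end{equation*}
Using $\Pv{Z} Z = 0$ and $Z\zz Z = I_p$, one obtains $\Lambda_i Z = -\Pv{Z} A_i A_i\zz Z$ and $\beta_i X_i X_i\zz Z = \beta_i Z$. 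Summing over $i$ using \eqref{eq:Qik} and $\sum_i A_i A_i\zz = AA\zz$ gives the clean identity
\begin{equation*}
  Q^{(k)} Z = -\Pv{Z} AA\zz Z - \bar{\beta}\, Z, \qquad \bar{\beta} := \sum_{i=1}^d \beta_i.
\end{equation*}

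Finally, I would plug this into the KKT equation to read off both stationarity conditions of Lemma~\ref{le:kkt}. Left-multiplying $-\Pv{Z} AA\zz Z - \bar{\beta} Z + R(Z) = Z\Upsilon$ by $\Pv{Z}$ kills the $\bar{\beta}Z$ and $Z\Upsilon$ terms and yields $\Pv{Z}(-AA\zz Z + R(Z)) = 0$; left-multiplying by $Z\zz$ yields $\Upsilon = -\bar{\beta} I_p + Z\zz R(Z)$, so symmetry of $\Upsilon$ forces $Z\zz R(Z) = R(Z)\zz Z$. Invoking Lemma~\ref{le:kkt} completes the argument.

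The step I expect to be most delicate is verifying the closed form of $\Lambda_i$ at a feasible point and confirming that it is independent of the orthogonal matrix $O_i$ relating $X_i$ and $Z$; this is where the specific low-rank multiplier update \eqref{eq:multiplier} earns its keep, and it is what makes the two terms involving $\Lambda_i$ and $\beta_i X_i X_i\zz$ in $Q_i^{(k)} Z$ collapse to quantities depending only on $Z$ and $A_i$.
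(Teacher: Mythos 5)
Your proposal is correct and follows essentially the same route as the paper: the heart of both arguments is the observation that feasibility collapses $Q^{(k)}Z^{(k)}$ to $-\Pv{Z^{(k)}}AA\zz Z^{(k)} - \bigl(\sum_i \beta_i\bigr)Z^{(k)}$, after which optimality of $D^{(k)}=0$ yields stationarity of \eqref{eq:opt-spca-l1}. The only difference is cosmetic: you finish by writing out the explicit KKT system with the symmetric multiplier $\Upsilon$ and matching the two conditions of Lemma \ref{le:kkt}, whereas the paper invokes the tangent-space projection form of the optimality condition (Theorem 4.1 of the cited reference) and notes that the extra term $-\bigl(\sum_i\beta_i\bigr)Z^{(k)}$ vanishes under $\projts{Z^{(k)}}{\cdot}$ --- two equivalent phrasings of the same conclusion.
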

	
	The proof of Lemma \ref{le:optimality} is deferred to Appendix \ref{apx:optimality}.
	It motivates the following definition of an  $\epsilon$-stationary point for
	the subspace-splitting model \eqref{eq:opt-ps}.
	
	\begin{definition}
		Suppose $Z^{(k)}$ is the $k$-th iterate of Algorithm \ref{alg:DSSAL1}.
		Then $Z^{(k)}$ is called an $\epsilon$-stationary point
		if the following condition holds:
		\begin{equation*}
			\dfrac{1}{d} \sumiid \norm{Z^{(k)}(Z^{(k)})\zz - X_i^{(k)}(X_i^{(k)})\zz}\fs 
			+ \norm{D^{(k)}}\fs \leq \epsilon^2,
		\end{equation*}
		where $\epsilon > 0$ is a small constant.
	\end{definition}
	
	In order to prove the convergence of our algorithm, we need to impose some mild conditions 
	on algorithm parameters, which are summarized below.
	
	\begin{condition}\label{asp:step}
		The algorithm parameters $\eta > 0$, $c_i > 0$, $c_i^{\prime} > 0$, $\delta_i \in [0, 1)$,
		as well as two auxiliary parameters $\rho \geq 1$ and $\underline{\sigma} \in (0, 1)$,
		satisfy the following conditions:
		\begin{equation*}
			0 < \eta < \dfrac{1}{2 \bar{M}}, \;
			0 \leq \delta_i < \dfrac{\underline{\sigma}}{2 \sqrt{\rho d}}, \;
			0 < \underline{\sigma} < \min\hkh{1, \dfrac{1}{\sqrt{c_i}}}, \;
			\iid,
		\end{equation*}
		where $\bar{M} = \mu \sqrt{np} / 2 + 2 \norm{A}\fs + \sqrt{p} \sum_{i = 1}^d \beta_i > 0$ is a constant.
	\end{condition}
	
	\begin{condition}
		\label{asp:beta}
		Each penalty parameter $\beta_i (\iid)$ in \eqref{eq:lag-ps} has a lower bound
		\begin{equation*}
			\begin{aligned}
				\max \left\{
				\xi_i \norm{A_i}_2^2, \;
				\dfrac{8 (\mu n p + \sqrt{p} \norm{A}\fs)}{(1 - \underline{\sigma}^2)}, \;
				\dfrac{ 6\sqrt{p} \norm{A_i}\fs}{c_i \underline{\sigma}^2 (1 - \underline{\sigma}^2)}
				\right\},
			\end{aligned}
		\end{equation*}
		where 
		$\xi_i = \max \{ c_i^{\prime}, \; 4\sqrt{2} / \underline{\sigma}, \; 4( 2 \sqrt{\rho d} + \sqrt{2} ) / (\underline{\sigma} - 2\sqrt{\rho d} \delta_i), \;
			4(\sqrt{2 \rho d} + 1) / (c_i \underline{\sigma}^2 \rho d) \} > 0$
		is a constant.
		In addition, $\beta_i\leq \rho\beta_j$ holds for any $i, j \in \{1, \dotsc, d\}$.
	\end{condition}

	We note that the above technical conditions are not necessary in a practical implementation,
	They are introduced purely for the purpose of theoretical analysis to facilitate obtaining a global 
	convergence rate and corresponding worst-case complexity for Algorithm \ref{alg:DSSAL1}.

	\begin{theorem}
		\label{thm:global}
		
		Suppose $\{Z^{(k)}\}$ is an iterate sequence generated by Algorithm \ref{alg:DSSAL1},
		starting from an arbitrary orthogonal matrix $Z^{(0)} \in \stiefel$, with parameters satisfying 
		Conditions \ref{asp:step} and \ref{asp:beta}. 
		Then $\{Z^{(k)}\}$ has at least one accumulation point and any accumulation point 
		must be a first-order stationary point of the sparse PCA problem \eqref{eq:opt-spca-l1}.
		Moreover, for any integer $K > 1$, it holds that
		\begin{equation*}
			\min\limits_{k = 1, \dotsc, K} 
			\hkh{ \norm{D^{(k)}}\fs + 
			\dfrac{1}{d} \sumiid \norm{Z^{(k)}(Z^{(k)})\zz - X_i^{(k)}(X_i^{(k)})\zz}\fs }
			\leq \dfrac{C}{K},
		\end{equation*}
		where $C > 0$ is a constant.
		
	\end{theorem}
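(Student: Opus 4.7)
The proof plan follows the standard augmented-Lagrangian descent paradigm, but the nonstandard multiplier update \eqref{eq:ps-mult} and the fact that both the $Z$-subproblem and the $X_i$-subproblems are solved inexactly require extra care in the potential-function bookkeeping.

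\textbf{Step 1: Construct a potential (merit) function.} The plan is to work with the augmented Lagrangian itself, $\Phi^{(k)} := \cL(Z^{(k)}, \{X_i^{(k)}\}, \{\Lambda_i^{(k)}\})$ defined in \eqref{eq:lag-ps}, and track its change across the three sub-updates of one outer iteration. Since $\Lambda_i^{(k)}$ has the closed-form \eqref{eq:multiplier}, I first record the $a\,priori$ bound $\|\Lambda_i^{(k)}\|\ff \le 2\|\Pv{X_i^{(k)}}A_iA_i\zz X_i^{(k)}\|\ff \le 2\|A_i\|\fs$, together with analogous bounds on $\|W_i^{(k)}\|\ff$. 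These ensure $\Phi^{(k)}$ is bounded below uniformly by $\min \bar f - O(1)$, using that all iterates live in the compact set $\stiefel$.

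\textbf{Step 2: Three-piece sufficient decrease.} I would then separately bound the change of $\cL$ caused by (i) the $Z$-step, (ii) the $X_i$-steps, and (iii) the multiplier refresh. For (i), mimicking the ManPG descent analysis of \cite{Chen2020}: because $D^{(k)}$ solves the strongly convex tangent-space model \eqref{eq:ps-sub-z-pg}, a standard quadratic-majorization argument combined with the projection-to-$\stiefel$ error estimate $\|Z^{(k+1)}-Z^{(k)}-D^{(k)}\|\ff \le \tfrac12\|D^{(k)}\|\fs$ and the Lipschitz constant $\bar M$ of the smooth part of $\cL$ in $Z$ produces $\cL(Z^{(k+1)},\ldots) - \cL(Z^{(k)},\ldots) \le -\alpha\|D^{(k)}\|\fs$ for some $\alpha>0$, provided $\eta<1/(2\bar M)$ as in Condition~\ref{asp:step}. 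For (ii), condition \eqref{eq:ps-sub-x-con-1} rewritten in terms of $\cL_i$ gives $\cL_i(Z^{(k+1)},X_i^{(k+1)},\Lambda_i^{(k)}) - \cL_i(Z^{(k+1)},X_i^{(k)},\Lambda_i^{(k)}) \le - \frac{c_i}{c_i'\|A_i\|_2^2+\beta_i}\|\Pv{X_i^{(k)}}H_i^{(k)}X_i^{(k)}\|\fs$. For (iii), the change equals $-\tfrac12\langle \Lambda_i^{(k+1)}-\Lambda_i^{(k)},\, X_i^{(k+1)}(X_i^{(k+1)})\zz - Z^{(k+1)}(Z^{(k+1)})\zz\rangle$; using $\|\Lambda_i^{(k+1)}-\Lambda_i^{(k)}\|\ff \le 2\|\Pv{X_i^{(k+1)}}A_iA_i\zz X_i^{(k+1)}\|\ff + 2\|\Pv{X_i^{(k)}}A_iA_i\zz X_i^{(k)}\|\ff$ and condition \eqref{eq:ps-sub-x-con-2}, this increase is controlled by a constant multiple of $\delta_i^2$ times the decrease produced in (ii), plus a term proportional to $\|X_i^{(k+1)}(X_i^{(k+1)})\zz - Z^{(k+1)}(Z^{(k+1)})\zz\|\ff$ that can be absorbed into the $\beta_i$-quadratic coming from the augmented penalty.

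\textbf{Step 3: Calibrate $\beta_i$ to merge the three pieces.} Summing (i)--(iii) and choosing $\beta_i$ large enough (precisely as in Condition~\ref{asp:beta}), the net change becomes
\begin{equation*}
\Phi^{(k+1)} - \Phi^{(k)} \le -\alpha\|D^{(k)}\|\fs - \sumiid \gamma_i\|\Pv{X_i^{(k)}}H_i^{(k)}X_i^{(k)}\|\fs - \sumiid \kappa_i\|X_i^{(k+1)}(X_i^{(k+1)})\zz - Z^{(k+1)}(Z^{(k+1)})\zz\|\fs,
\end{equation*}
for strictly positive constants $\alpha,\gamma_i,\kappa_i$. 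This is the main technical obstacle: one needs the inexactness tolerance $\delta_i$ to beat the multiplier-jump term, which forces the bound $\delta_i<\underline\sigma/(2\sqrt{\rho d})$, and simultaneously needs $\beta_i$ large enough that the perturbation of the penalty term dominates the remaining cross terms.

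\textbf{Step 4: Telescoping and consequences.} Since $\Phi^{(k)}$ is bounded below, telescoping from $k=1$ to $K$ yields $\sum_{k=1}^{K}\big(\|D^{(k)}\|\fs + \tfrac1d\sumiid \|Z^{(k)}(Z^{(k)})\zz - X_i^{(k)}(X_i^{(k)})\zz\|\fs\big) \le C$ for some constant $C$ depending on $\Phi^{(1)} - \inf\Phi$, $\alpha$, and $\min_i\kappa_i$. Taking the minimum over $k$ yields the claimed $O(1/K)$ rate. Compactness of $\stiefel$ gives an accumulation point $Z^\star$; along a convergent subsequence $D^{(k)}\to 0$ and $X_i^{(k)}(X_i^{(k)})\zz \to Z^\star (Z^\star)\zz$, so after passing to the limit and invoking Lemma~\ref{le:optimality} the point $Z^\star$ satisfies the stationarity system \eqref{eq:kkt}. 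The one subtlety in taking limits is the non-smooth term $r$: I would use outer semi-continuity of $\partial r$ together with the optimality system of \eqref{eq:ps-sub-z-pg} (i.e. $R(Z^{(k)})\in \partial r(Z^{(k)}+D^{(k)})$ with $R(\cdot)$ uniformly bounded) to identify a valid subgradient at $Z^\star$, thereby closing the argument.
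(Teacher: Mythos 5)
Your overall architecture (augmented Lagrangian as potential, three-piece decrease from the $Z$-step, the $X_i$-steps, and the multiplier refresh, then telescoping) matches the paper's proof, and your treatment of the $Z$-step and of the final limiting argument is essentially Corollary \ref{cor:des-z} and the end of Appendix \ref{apx:global}. However, there are two genuine gaps in the middle of your plan. First, your Step 3 asserts a net decrease containing $-\sum_i \kappa_i\,\dists{Z^{(k+1)}}{X_i^{(k+1)}}$ with $\kappa_i>0$, but nothing in your Steps 1--2 produces such a term. The only source of decrease tied to the $X_i$-update is condition \eqref{eq:ps-sub-x-con-1}, which is measured by $\norm{\Pv{X_i^{(k)}}H_i^{(k)}X_i^{(k)}}\fs = \beta_i^2\norm{\Pv{X_i^{(k)}}Z^{(k+1)}(Z^{(k+1)})\zz X_i^{(k)}}\fs$ (by the cancellation built into the multiplier formula \eqref{eq:multiplier}); this quantity can vanish while $\dists{Z^{(k+1)}}{X_i^{(k)}}$ is maximal, e.g.\ when $(X_i^{(k)})\zz Z^{(k+1)}=0$. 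Converting it into a positive multiple of the feasibility violation requires the uniform bound $\sigma_{\min}\dkh{(X_i^{(k)})\zz Z^{(k+1)}}\geq\underline{\sigma}$, which is not automatic: the paper establishes it by induction (Lemma \ref{le:svd-xz}), using Lemma \ref{le:dist-zk} to control how much the $Z$-step can inflate the feasibility gap and the contraction \eqref{eq:dist-xk+1-zk} to pull it back, with the lower bounds on $\beta_i$ in Condition \ref{asp:beta} calibrated exactly for this balance. Your proposal never addresses why the iterates stay in this nondegenerate region, and the parameter $\underline{\sigma}$ appears in your Step 3 without justification. This invariant is the linchpin of the whole subspace-splitting analysis and cannot be omitted.

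Second, your bound on the multiplier jump is too weak to close the argument. Bounding $\norm{\Lambda_i^{(k+1)}-\Lambda_i^{(k)}}\ff$ by the sum of the two individual norms gives only an $O(\norm{A_i}\fs)$ constant (the local multipliers do \emph{not} tend to zero, since the $X_i^{(k)}$ never converge to dominant eigenvectors of $A_iA_i\zz$), and condition \eqref{eq:ps-sub-x-con-2} controls $\norm{\Pv{X_i^{(k+1)}}H_i^{(k)}X_i^{(k+1)}}\ff$, not $\norm{\Pv{X_i^{(k+1)}}A_iA_i\zz X_i^{(k+1)}}\ff$, so it cannot be used the way you propose. The resulting increase of $\cL$ at the multiplier step would then be of order $\norm{A_i}\fs\,\distp{Z^{(k+1)}}{X_i^{(k+1)}}$ --- first power of the feasibility distance --- which cannot be absorbed by a decrease that is quadratic in that distance once the iterates are near feasibility. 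The paper instead uses the Lipschitz-type estimate $\norm{\Lambda_i^{(k+1)}-\Lambda_i^{(k)}}\ff\leq 4\norm{A_i}_2^2\,\distp{X_i^{(k+1)}}{X_i^{(k)}}$ (Lemma B.4 of \cite{Wang2020distributed}) together with \eqref{eq:dist-zk+1-xk+1}, making the multiplier-jump contribution quadratic in the distances (Corollary \ref{cor:des-lambda}) so that a large enough $\beta_i$ dominates it. Both repairs are needed before your Step 4 telescoping is valid.
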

	
	The proof of Theorem \ref{thm:global}, which is rather complicated and lengthy, will be given
	in Appendix \ref{apx:global}.  The global sub-linear convergence rate in Theorem \ref{thm:global}
	guarantees that DSSAL1 is able to return an $\epsilon$-stationary point in at most 
	$O(1/\epsilon^2)$ iterations.  
	Since DSSAL1 performs one round of communication per iteration, 
	the number of communication rounds required to obtain an $\epsilon$-stationary point 
	is also $O(1/\epsilon^2)$ at the most.

\section{Numerical results}

\label{sec:numerical-result}
	
	In this section, we evaluate the empirical performance of DSSAL1 
	through a set of comprehensive numerical experiments.
	All the experiments throughout this section are performed on a high-performance 
	computing cluster
	\footnote{More information at \url{http://lsec.cc.ac.cn/chinese/lsec/LSSC-IVintroduction.pdf}},
	called LSSC-IV	which is maintained at the State Key Laboratory of Scientific and 
	Engineering Computing (LSEC), Chinese Academy of Sciences.  The
	LSSC-IV cluster has 408 nodes, each consisting of two Inter(R) Xeon(R) Gold 6140 processors 
	(at $2.30$GHz $\times 18$) with $192$GB memory, running under  
	the operating system Red Hat Enterprise Linux Server 7.3.

	We compare the performance of DSSAL1 with two  state-of-the-art algorithms:
	(1) an ADMM-type algorithm called  SOC \cite{Lai2014}
	and (2) a manifold proximal gradient method called ManPG-Ada \cite{Chen2020}.
	Since open-source, parallel codes for the above two algorithms are not available, 
	to conduct experiments under the distributed environment of the LSSC-IV cluster, 
	we implemented the two existing algorithms and our own algorithm DSSAL1 
	in C++ with MPI for inter-process communications\footnote{
	Our code is downloadable from \url{http://lsec.cc.ac.cn/~liuxin/code.html}}.
	For the two existing algorithms, we set all parameters to their default values 
	as described in \cite{Lai2014, Chen2020}.  The linear algebra library Eigen\footnote
	{Available from \url{https://eigen.tuxfamily.org/index.php?title=Main_Page}} 
	(version 3.3.8) is adopted for matrix computation tasks.


\subsection{DSSAL1 Implementation details}

	
	In Algorithm \ref{alg:DSSAL1}, we set the penalty parameters to 
		$\beta_i = 0.1(\|\nabla f_i (X_i^{(0)})\|\ff + \mu)$, 
	and in subproblem \eqref{eq:ps-sub-z-pg} we set the hyperparameter 
	to $\eta = 1/(\sum_{i = 1}^d \beta_i)$ . 
	In Algorithm \ref{alg:fixed-point}, we set the step size to $\tau = 1/\dkh{2\eta}$ 
	and terminate the algorithm whenever
	\begin{equation*} 
		\norm{ D(j)\zz Z^{(k)} + (Z^{(k)})\zz D(j) }\ff \leq \norm{D^{(k-1)}}\ff
	\end{equation*}
	is satisfied or the number of iterations reaches $10$.
	
	We use the well-known SSI method \cite{Rutishauser1970}
	to obtain a very rough solution to subproblem~\eqref{eq:ps-sub-x} for $X_i$.
	More specifically, we initialize $X_i$ to the previous iterate $X_i^{(k)}$
	and perform a single SSI iteration, as given by \eqref{eq:Xik+1}, to generate 
	the next iterate $X_i^{(k+1)}$.
	
	The stopping criteria used in Algorithm \ref{alg:DSSAL1} are
	\begin{equation} \label{eq:stop}
		\dfrac{1}{d} \sumiid  \norm{Z^{(k)}(Z^{(k)})\zz - X_i^{(k)}(X_i^{(k)})\zz}\ff \leq \epsilon_c
		\mbox{~~and~~}
		\norm{D^{(k)}}\ff \leq \epsilon_g,
	\end{equation}
	where $\epsilon_c$ and $\epsilon_g$ are two small positive constants.
	Unless otherwise specified, $\epsilon_c$ and $\epsilon_g$ are set to $10^{-6}$ and $10^{-8}np$, respectively. 
	Algorithm \ref{alg:DSSAL1} is also terminated once the iteration count reaches $\mathtt{MaxIter}=50000$.

\subsection{Synthetic data generation}
	\label{sec-data}
 
	In our experiments, a synthetic data matrix $A \in \Rnm$ is constructed into the form of (economy-size) SVD:
	\begin{equation}
		\label{eq:gen-A}
		A = U \Sigma V\zz,
	\end{equation}
	where $U\in\Rnn$ and $V\in\bR^{m\times n}$ satisfy $U\zz U = V\zz V = I_n$
	and $\Sigma \in \Rnn$ is nonnegative and diagonal.  Specifically, $U$ and $V$
	are results of orthonormalization of random matrices whose entries are drawn 
	independently and identically from $[-1,1]$ under the uniform distribution, and
	\begin{equation*}
		\Sigma_{ii}=\xi^{1-i}, \quad \iin,
	\end{equation*}  
	where the parameter $\xi\geq1$ determines the decay rate of singular values.
	Finally, we apply the standard PCA pre-processing operations to a data matrix $A=U\Sigma V\zz$ 
	by subtracting the sample-mean from each sample and then normalizing the rows of the 
	resulting matrix to make them of unit $\ell_2$-norm.  For our synthetic data matrices, 
	such pre-processing will only slightly perturb the decay rate of the singular-values 
	before the pre-processing which is uniformly equal to $1/\xi$ by construction.
	Unless specified otherwise, the default value for the decay-rate parameter is $\xi = 1.1$.
	
	In the numerical experiments, all the algorithms are started from the same initial points.
	Since the optimization problem is non-convex, different solvers may still occasionally 
	return different solutions when starting from a common initial point at random.
	As suggested in \cite{Chen2020}, to increase the chance that all solvers find the same solution, 
	we first run the Riemannian subgradient method \cite{Bento2017,Ferreira2019} for $500$ iterations 
	and then use the resulting output as the common starting point.
	
\subsection{Comprehensive comparison on synthetic data}

	In order to do a thorough evaluation on the empirical performance of DSSAL1, 
	we design four groups of test data matrices, generated as in Subsection~\ref{sec-data}.
	In each group, there is only one parameter varying while all the others are fixed.
	Specifically, the varying and fixed parameters for the four groups are as follows:
	\begin{enumerate}
		
		\item varying sample dimension $n = 1000 + 500j$ for $j = 0, 1, 2, 3, 4$, while 
		$m = 128000$, $p = 10$, $\mu = 0.5$, $d = 128$;
		
		\item varying number of computed loading vectors $p = 10 + 5j$ for $j = 0, 1, 2, 3, 4$, while
		$n = 1000$, $m = 128000$, $\mu= 0.3$, $d = 128$;
		
		\item varying regularization parameter $\mu = 0.2 + 0.2j$ for $j = 0, 1, 2, 3, 4$, while 
		$n = 1000$, $m = 128000$, $p = 10$, $d = 128$;
		
		\item varying number of cores $d = 16 \times 2^{j}$ for $j = 0, 1, 2, 3, 4$, while
		$n = 1000$, $m = 256000$, $p = 10$, $\mu = 0.5$.
		
	\end{enumerate}
	
	Additional experimental results for varying $\xi$ will be presented in the next subsection.
	Numerical results obtained for the above four test groups are presented 
	in Tables \ref{tb:spca_n} to \ref{tb:spca_d}, respectively,
	where we record wall-clock times in seconds and total rounds of communication.
	The average function values and sparsity levels 
	for the four groups of tests are provided in Table \ref{tb:spca_aver}.
	When computing the sparsity of a solution matrix (i.e., the percentage of zero elements),
	we set a matrix element to zero when its absolute value is less than $10^{-5}$. 
	
	From Table \ref{tb:spca_aver}, we see that all three algorithms have attained comparable solution qualities 
	with similar function values and sparsity levels in the four groups of testing problems.

	\begin{table}[ht]
		\begin{center}
			\begin{minipage}{\textwidth}
				\caption{Comparison of DSSAL1, ManPG-Ada, and SOC for different $n$.}
				\label{tb:spca_n}
				\begin{tabular*}{\textwidth}{@{\extracolsep{\fill}}ccccccc@{\extracolsep{\fill}}}
					\toprule%
					& \multicolumn{3}{@{}c@{}}{Wall-clock time in seconds} 
					& \multicolumn{3}{@{}c@{}}{Rounds of communication} \\
					\cmidrule{2-4}\cmidrule{5-7}%
					$n$ & DSSAL1 & ManPG-Ada & SOC 
					& DSSAL1 & ManPG-Ada & SOC \\
					\midrule
					1000  & {\bf 10.97} & 26.76 & 72.07
					& {\bf 655}  & 1794 &  7569\\
					1500  & {\bf 6.61} & 16.12 & 57.75 
					& {\bf 223} & 642 & 2201 \\
					2000  & {\bf 49.22} & 172.32 & 725.63
					& {\bf 1238} & 5054 & 20444 \\
					2500  & {\bf 181.97} & 412.51 & 2296.26
					& {\bf 3753} & 10971 & 45707 \\
					3000  & {\bf 34.58} & 153.40 & 860.76 
					& {\bf 680} & 2789 & 11480 \\
					\bottomrule
				\end{tabular*}
			\end{minipage}
		\end{center}
	\end{table}

	\begin{table}[h]
		\begin{center}
			\begin{minipage}{\textwidth}
				\caption{Comparison of DSSAL1, ManPG-Ada, and SOC for different $p$.}
				\label{tb:spca_p}
				\begin{tabular*}{\textwidth}{@{\extracolsep{\fill}}ccccccc@{\extracolsep{\fill}}}
					\toprule%
					& \multicolumn{3}{@{}c@{}}{Wall-clock time in seconds} 
					& \multicolumn{3}{@{}c@{}}{Rounds of communication} \\
					\cmidrule{2-4}\cmidrule{5-7}%
					$p$ & DSSAL1 & ManPG-Ada & SOC 
					& DSSAL1 & ManPG-Ada & SOC \\
					\midrule
					10  & {\bf 9.74} & 26.42 & 75.25
					& {\bf 629}  & 1622 &  6652\\
					15  & {\bf 29.99} & 56.31 & 153.98
					& {\bf 1728} & 3586 & 15865 \\
					20  & {\bf 110.26} & 239.51 & 466.22
					& {\bf 6144} & 14107 & 44086 \\
					25  & {\bf 68.79} & 148.34 & 334.58
					& {\bf 3030} & 6153 & 27060 \\
					30  & {\bf 110.16} & 173.11 & 204.87
					& {\bf 5133} & 6966 & 14621 \\
					\bottomrule
				\end{tabular*}
			\end{minipage}
		\end{center}
	\end{table}

	\begin{table}[h]
		\begin{center}
			\begin{minipage}{\textwidth}
				\caption{Comparison of DSSAL1, ManPG-Ada, and SOC for different $\mu$.}
				\label{tb:spca_mu}
				\begin{tabular*}{\textwidth}{@{\extracolsep{\fill}}ccccccc@{\extracolsep{\fill}}}
					\toprule%
					& \multicolumn{3}{@{}c@{}}{Wall-clock time in seconds} 
					& \multicolumn{3}{@{}c@{}}{Rounds of communication} \\
					\cmidrule{2-4}\cmidrule{5-7}%
					$\mu$ & DSSAL1 & ManPG-Ada & SOC 
					& DSSAL1 & ManPG-Ada & SOC \\
					\midrule
					0.2  & {\bf 25.05} & 59.46 & 430.37
					& {\bf 1537}  & 4140 &  50000\\
					0.4  & {\bf 22.16} & 46.76 & 115.36
					& {\bf 1393} & 3061 & 13680 \\
					0.6  & {\bf 11.61} & 29.08 & 81.82
					& {\bf 838} & 1899 & 8278 \\
					0.8  & {\bf 10.09} & 40.07 & 66.46
					& {\bf 733} & 3369 & 8121 \\
					1.0  & {\bf 9.12} & 19.80 & 56.33
					& {\bf 655} & 1348 & 6396 \\
					\bottomrule
				\end{tabular*}
			\end{minipage}
		\end{center}
	\end{table}
	\begin{table}[hh]
		\begin{center}
			\begin{minipage}{\textwidth}
				\caption{Comparison of DSSAL1, ManPG-Ada, and SOC for different $d$.}
				\label{tb:spca_d}
				\begin{tabular*}{\textwidth}{@{\extracolsep{\fill}}ccccccc@{\extracolsep{\fill}}}
					\toprule%
					& \multicolumn{3}{@{}c@{}}{Wall-clock time in seconds} 
					& \multicolumn{3}{@{}c@{}}{Rounds of communication} \\
					\cmidrule{2-4}\cmidrule{5-7}%
					$d$ & DSSAL1 & ManPG-Ada & SOC 
					& DSSAL1 & ManPG-Ada & SOC \\
					\midrule
					16  & {\bf 161.72} & 168.36 & 383.14
					& {\bf 897}  & 1169 &  5175\\
					32  & {\bf 106.95} & 135.81 & 312.20
					& {\bf 808} & 1169 & 5175 \\
					64  & {\bf 54.33} & 68.89 & 167.27
					& {\bf 753} & 1169 & 5175 \\
					128  & {\bf 24.28} & 35.54 & 96.04
					& {\bf 683} & 1169 & 5175 \\
					256  & {\bf 9.17} & 14.74 & 47.61
					& {\bf 660} & 1169 & 5175 \\
					\bottomrule
				\end{tabular*}
			\end{minipage}
		\end{center}
	\end{table}

	\begin{table}[h]
		\begin{center}
			\begin{minipage}{\textwidth}
	\caption{Average function values and sparsity levels of DSSAL1, ManPG-Ada, and SOC for different tests.}
				\label{tb:spca_aver}
				\begin{tabular*}{\textwidth}{@{\extracolsep{\fill}}ccccccc@{\extracolsep{\fill}}}
					\toprule%
					& \multicolumn{3}{@{}c@{}}{Function value} 
					& \multicolumn{3}{@{}c@{}}{Sparsity level} \\
					\cmidrule{2-4}\cmidrule{5-7}%
					Test & DSSAL1 & ManPG-Ada & SOC 
					& DSSAL1 & ManPG-Ada & SOC \\
					\midrule
					Varying $n$  & -672.26 & -672.26 & -672.26
					& 16.54\%  & 16.51\% &  16.50\% \\
					Varying $p$  & -360.52 & -360.51 & -360.46
					& 40.42\% & 40.46\% & 40.32\% \\
					Varying $\mu$  & -282.65 & -282.65 & -282.64
					& 26.28\% & 26.29\% & 26.28\% \\
					Varying $d$  & -302.02 & -301.97 &  -301.97
					& 22.13\% & 22.21\% & 22.21\% \\
					\bottomrule
				\end{tabular*}
			\end{minipage}
		\end{center}
	\end{table}
		
	It should be evident from Tables \ref{tb:spca_n} to \ref{tb:spca_d} that, 
	in all four test groups and in terms of both wall-clock time and round of communication,
	DSSAL1 clearly outperforms ManPG-Ada which in turn outperforms SOC by large margins.
	Since the amount of communication per round for the three algorithms are essentially the same,
	their total communication overhead is proportional to the rounds of communication required.
	Because DSSAL1 takes far fewer rounds of communication than the other two algorithms 
	(often by considerable margins), we conclude that DSSAL1 is a more 
	communication-efficient algorithm than the other two.  For example, in Table~\ref{tb:spca_mu} 
	for the case of $\mu=0.8$, the number of communication rounds taken by DSSAL1 is less
	than a quarter of that by ManPG-Ada and one tenth of that by COS.
	
\subsection{Empirical convergence rate}

	In this subsection, we examine empirical convergence rates of iterates produced by
	DSSAL1 and ManPG-Ada for comparison, while SOC is excluded from this experiment 
	given its obvious non-competitiveness in previous experiments.
	
	In the following experiments, 
	we fix $n = 1000$, $m = 128000$, $p = 5$, $\mu = 0.2$, and $d = 128$.
	Three synthetic matrices $A \in \Rnm$ is randomly generated by \eqref{eq:gen-A}
	with $\xi $ taking three different values $1.15$, $1.1$, and $1.05$, respectively,
	on which DSSAL1 and ManPG-Ada return $Z^{\ast}_{\mathrm{D}}$ 
	and $Z^{\ast}_{\mathrm{M}}$, respectively, 
	with smaller-than-usual termination tolerances 
	$\epsilon_c = 10^{-8}$ and $\epsilon_g = 10^{-10}np$ in \eqref{eq:stop}.
	We use the average of the two, 
	$Z^{\ast} = (Z^{\ast}_{\mathrm{D}} + Z^{\ast}_{\mathrm{M}}) / 2$,
	as a ``ground truth" solution.
	Then we rerun the two algorithms on the same $A$ 
	with the termination condition $\|Z^{(k)} - Z^{\ast}\|\ff \leq 3 \times 10^{-4}$
	and record the quantity $\|Z^{(k)} - Z^{\ast}\|\ff$ at each iteration.
	
	In Figure \ref{fig:rate}, we plot the iterate-error sequences $\{\|Z^{(k)} - Z^{\ast}\|\ff\}$ 
	for both DSSAL1 and ManPG-Ada and observe that both algorithms appear to converge 
	asymptotically at linear rates.  Overall, however, the convergence of DSSAL1 is 
	several times faster than that of ManPG-Ada.  
	We also provide the ratio between the iteration number of ManPG-Ada and 
	that of DSSAL1 for different values of $\xi$ in Table \ref{tb:iter_ratio}.
	In general, the closer to one $\xi$ is, the slower the singular values of $A$ decay, 
	and the more difficult the problem tends to be.
	Table \ref{tb:iter_ratio} demonstrates that in our test the advantage of DSSAL1 becomes 
	more and more pronounced as the test instances become more and more difficult to solve.

	\begin{figure}[h]
		\centering
		\subfloat[$\xi = 1.15$]{
			\label{subfig:xi_15}
			\includegraphics[width=0.3\linewidth]{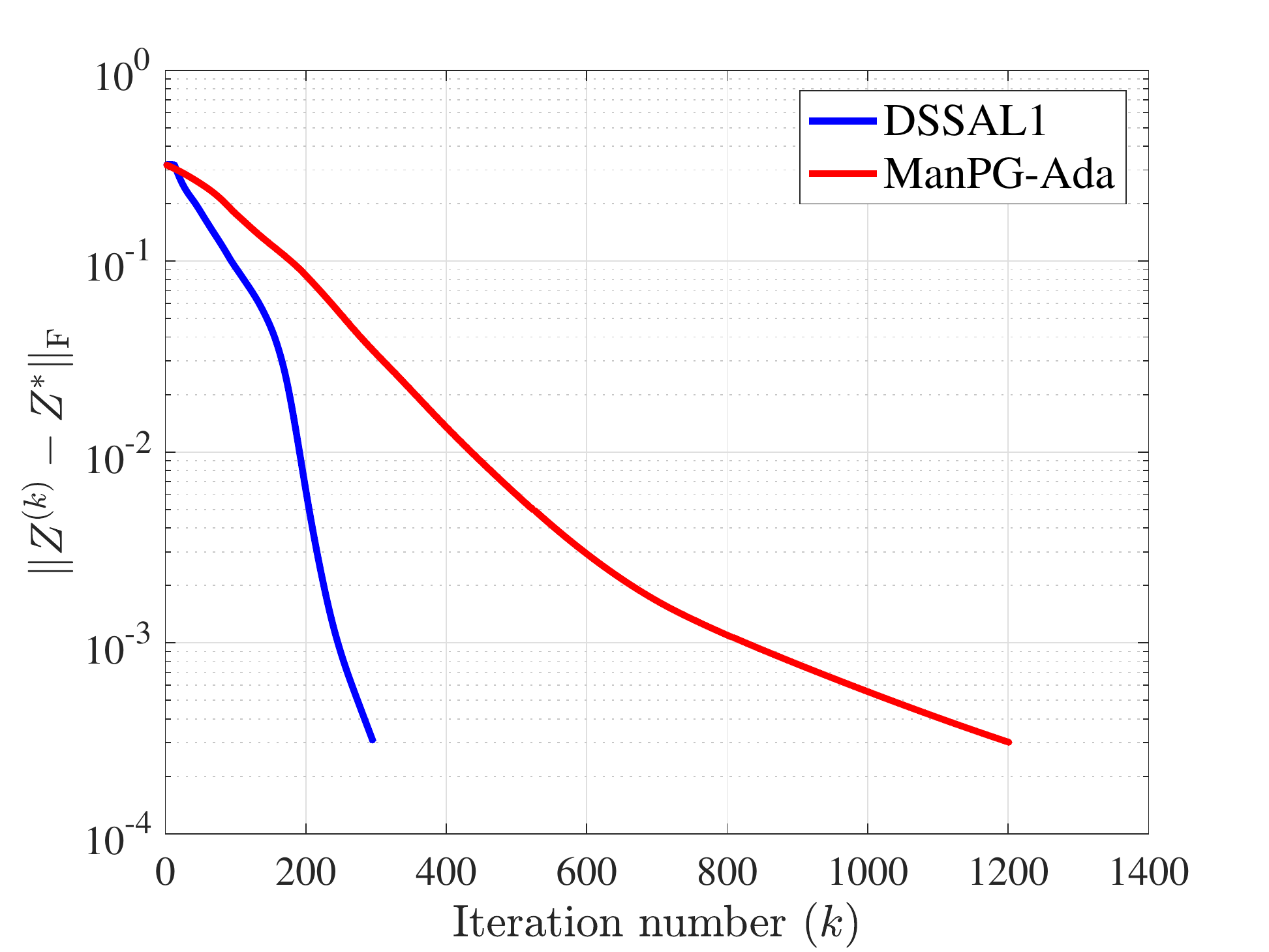}
		}
		\subfloat[$\xi = 1.1$]{
			\label{subfig:xi_10}
			\includegraphics[width=0.3\linewidth]{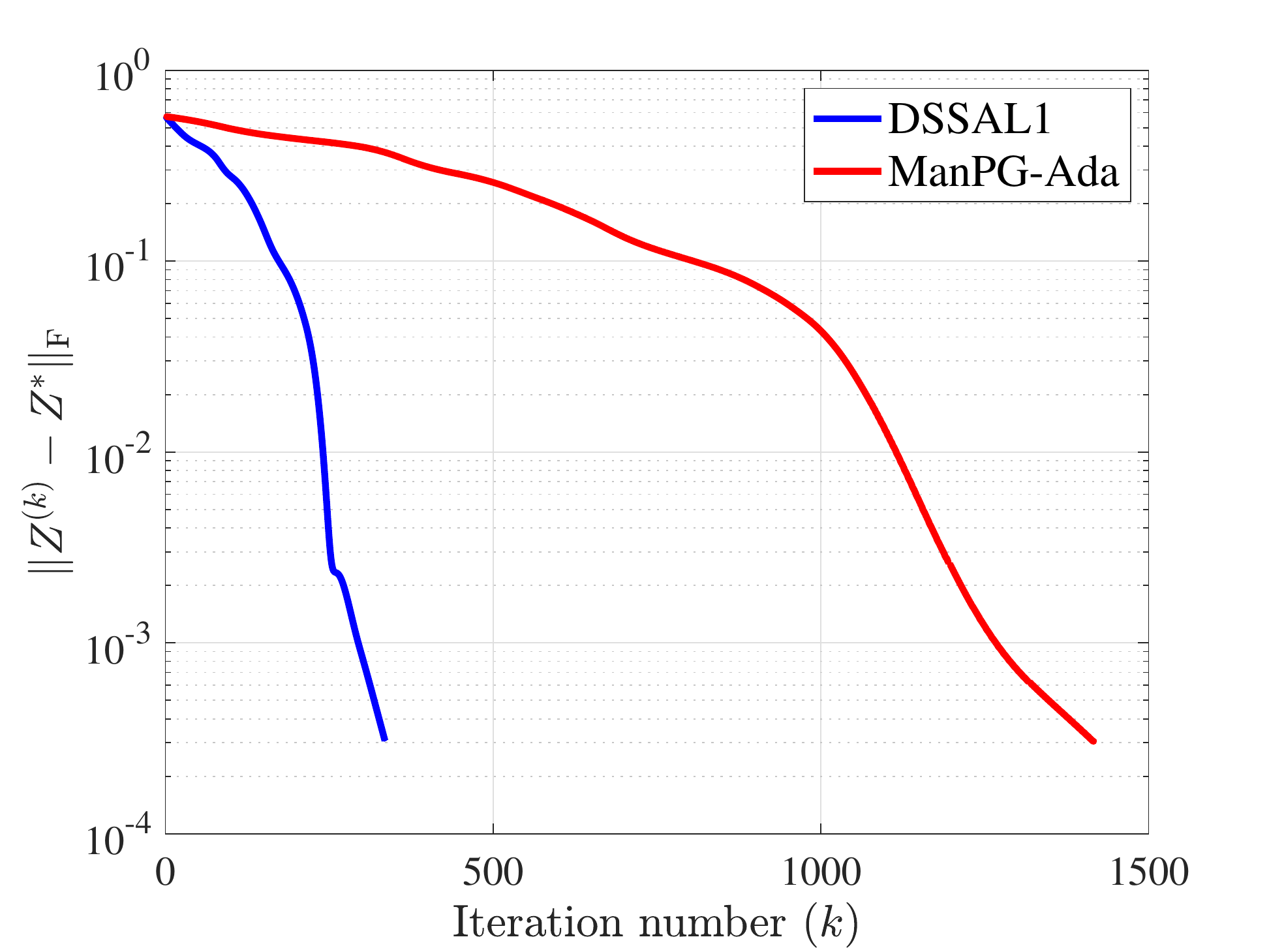}
		}
		\subfloat[$\xi = 1.05$]{
			\label{subfig:xi_05}
			\includegraphics[width=0.3\linewidth]{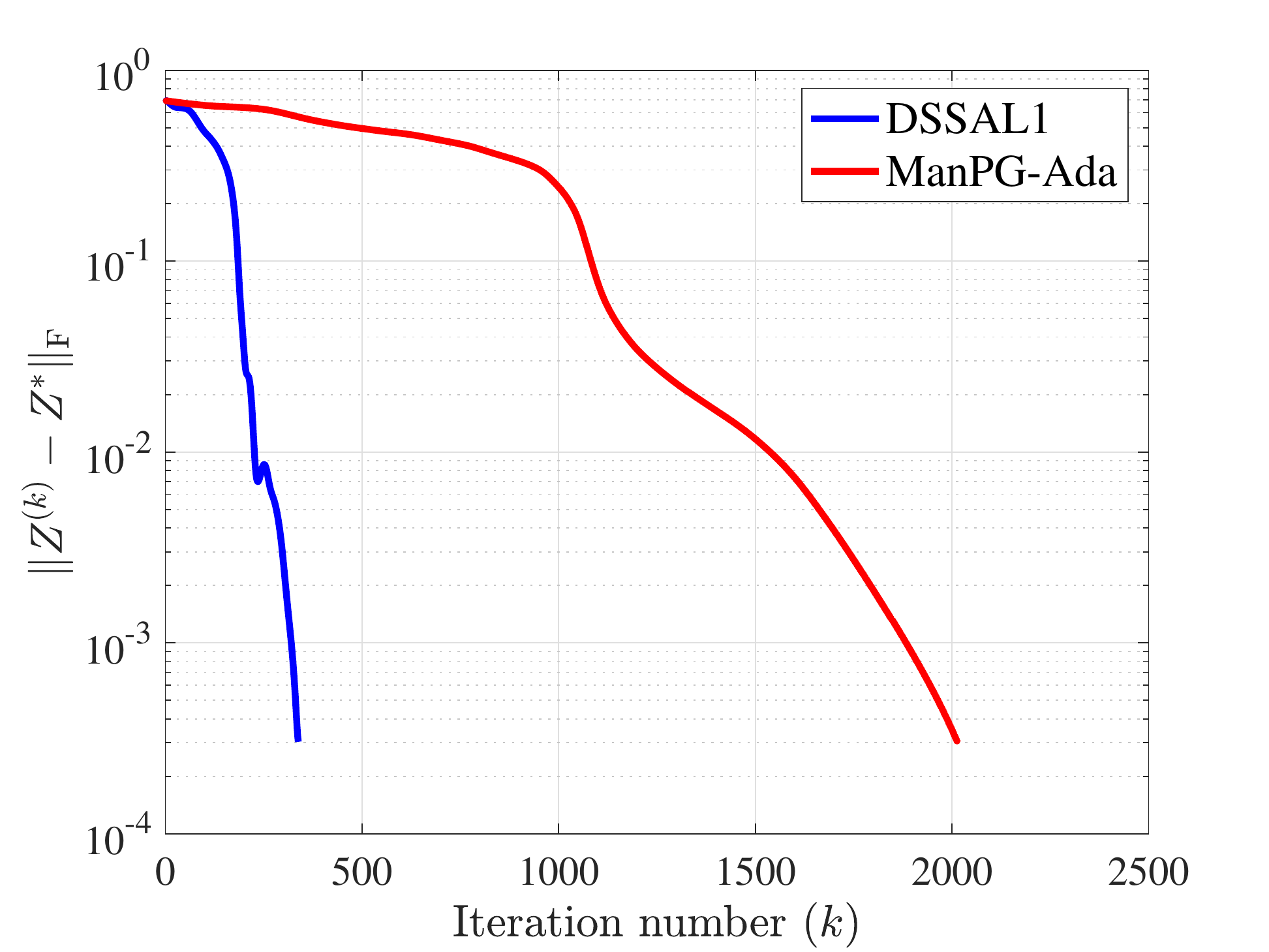}
		}
		
		\caption{Comparison between DSSAL1 and ManPG-Ada of empirical convergence rates.}
		\label{fig:rate}
	\end{figure}
	
	\begin{table}[ht]
		\begin{center}
			\begin{minipage}{\textwidth}
				\caption{The iteration number ratio between ManPG-Ada and DSSAL1 for different values of $\xi$.}
				\label{tb:iter_ratio}
				\begin{tabular*}{\textwidth}{@{\extracolsep{\fill}}cccc@{\extracolsep{\fill}}}
					\toprule%
					& $\xi = 1.15$ & $\xi = 1.1$ & $\xi = 1.05$ \\
					\midrule
					$\dfrac{\mathrm{It}_\mathrm{ManPG-Ada}}{\mathrm{It}_\mathrm{DSSAL1}}$ 
					& $\dfrac{1202}{296} \approx 4.06$ 
					& $\dfrac{1417}{335} \approx 4.23$ 
					& $\dfrac{2014}{338} \approx 5.96$ \\
					\bottomrule
				\end{tabular*}
			\end{minipage}
		\end{center}
	\end{table}

\section{Conclusion}

\label{sec:conclusion}

	In this paper, we propose a distributed algorithm, called DSSAL1, for solving the $\ell_1$-regularized 
	optimization model \eqref{eq:opt-spca-l1} for sparse PCA computation. 
	DSSAL1 has the following features.
	\begin{enumerate}
	\item The algorithm successfully extends the subspace-splitting strategy from orthogonal-transformation invariant
	objective function $f=\sum f_i$ to the sum $f + r$ where $r$ can be non-invariant and non-smooth.
	\item The algorithm has built-in mechanism for local-data masking and hence naturally protects local-data privacy 
	without requiring a special privacy preservation process.   
	\item The algorithm is storage-efficient in that beside local data, it only requires storing  
	$n\times p$ matrices (usually $p \ll n$) by each agent, thanks to a low-rank multiplier formula.
	\item The algorithm has a global convergence guarantee to stationary points and a 
	worst-case complexity under mild conditions, in spite of the nonlinear equality constraints
	for subspace consensus.

	\end{enumerate}

	Comprehensive numerical simulations are conducted under a distributed environment 
	to evaluate the performance of our algorithm in comparison to two state-of-the-art approaches.  
	Remarkably, the communication rounds required by DSSAL1 
	are often over one order of magnitude smaller than existing methods.
	These results indicate that DSSAL1 has a great potential in solving large-scale application
	problems in distributed environments where data privacy is a primary concern.
	
	Finally, we mention two related topics worthy of future studies.  One is the possibility of developing 
	asynchronous approaches for sparse PCA to address load balance issues in distributed environments.
	Another is to extend the subspace splitting strategy to decentralized networks so that a wider range of
	applications can benefit from the effectiveness of this approach.

\appendix

\appendixpage

\begin{appendices}

\section{Proof of Lemma \ref{le:kkt}}

\label{apx:first-order}

	\begin{proof}[Proof of Lemma \ref{le:kkt}]
		According to the definition of $\projts{Z}{\cdot}$, it follows that
		\begin{equation*}
			\begin{aligned}
				& \norm{ \projts{Z}{ -A A\zz Z + R(Z)} }\fs \\
				= {} & \dfrac{1}{4}\norm{Z\zz \dkh{-A A\zz Z + R(Z)} - \dkh{-A A\zz Z + R(Z)}\zz Z}\fs \\
				& + \norm{ \Pv{Z} \dkh{-A A\zz Z + R(Z)} }\fs \\
				= {} & \norm{ \Pv{Z} \dkh{-A A\zz Z + R(Z)} }\fs 
				+ \dfrac{1}{4}\norm{Z\zz R(Z) - R(Z)\zz Z}\fs,
			\end{aligned}		
		\end{equation*}
		where $R(Z) \in \partial r(Z)$.
		The proof is completed.
	\end{proof}

\section{Proof of Proposition \ref{prop:kkt-multipliers}}

\label{apx:kkt-ps}

	\begin{proof}[Proof of Proposition \ref{prop:kkt-multipliers}]
		To begin with, we assume that $\dkh{ Z, \{X_i\} }$ is a first-order stationary point.
		Then there exists $R(Z) \in \partial r(Z)$ such that
		\begin{equation*}
			\Pv{Z} \dkh{- A A\zz Z + R(Z)} = 0,
		\end{equation*}
		and $Z\zz R(Z)$ is symmetric.
		Let $\Theta = Z\zz R(Z) \in Z\zz \partial r(Z)$, $\Gamma_i = - X_i\zz A_i A_i\zz X_i$, and 
		\begin{equation*}
			\Lambda_i = - \Pv{X_i} A_i A_i\zz X_i X_i\zz - X_i X_i\zz A_i A_i\zz \Pv{X_i}
		\end{equation*}
		with $\iid$.  
		Then the matrices $\Theta$, $\Gamma_i$ and $\Lambda_i$ are symmetric and $\rank \dkh{\Lambda_i} \leq 2p$. 
		Moreover, we can deduce that 
		\begin{equation*}
			A_i A_i\zz X_i + X_i \Gamma_i + \Lambda_i X_i 
			=A_i A_i\zz X_i - X_iX_i\zz A_i A_i\zz X_i - \Pv{X_i} A_i A_i\zz X_i = 0,
		\end{equation*}
		and
		\begin{equation*}
			\begin{aligned}
				R(Z) + \sumiid\Lambda_i Z - Z \Theta
				= {} & R(Z) - \sumiid \Pv{Z} A_i A_i\zz Z - ZZ\zz R(Z) \\
				= {} & \Pv{Z} \dkh{- A A\zz Z + R(Z)} = 0.
			\end{aligned}
		\end{equation*}
		Hence, $\dkh{ Z, \{X_i\} }$ satisfies the conditions in \eqref{eq:kkt-multipliers} 
		under these specific choices of $\Theta$, $\Gamma_i$ and $\Lambda_i$.
		
		Conversely, we now assume that there exist $R(Z) \in \partial r(Z)$ 
		and symmetric matrices $\Theta$, $\Gamma_i$ and $\Lambda_i$ 
		such that $\dkh{ Z, \{X_i\} }$ satisfies the conditions in \eqref{eq:kkt-multipliers}. 
		It follows from the first and second equality in \eqref{eq:kkt-multipliers} that
		\begin{equation*}
			\begin{aligned}
				& \sumiid \Pv{X_i} A_i A_i\zz X_i X_i\zz 
				= - \sumiid \Pv{X_i} \dkh{ X_i\Gamma_i + \Lambda_i X_i } X_i\zz
				=  - \sumiid \Pv{X_i} \Lambda_i X_i X_i\zz \\
				& = - \Pv{Z} \dkh{ \sumiid\Lambda_i Z } Z\zz
				= \Pv{Z}\dkh{R(Z) - Z\Theta} Z\zz
				= \Pv{Z} R(Z) Z\zz.
			\end{aligned}
		\end{equation*}
		At the same time, since $X_i X_i\zz = Z Z\zz$, we have 
		\begin{align*}
			\sumiid \Pv{X_i} A_i A_i\zz X_i X_i\zz 
			={} & \sumiid \Pv{Z} A_i A_i\zz Z Z\zz = \Pv{Z} A A\zz Z Z\zz.
		\end{align*}
		Combining the above two equalities and orthogonality of $Z$, we arrive at
		\begin{equation*}
			\Pv{Z} \dkh{- A A\zz Z + R(Z)} = 0.
		\end{equation*}
		Left-multiplying both sides of the second equality in \eqref{eq:kkt-multipliers} by $Z\zz$,
		we obtain that 
		\begin{equation*}
			Z\zz R(Z) = \Theta - \sumiid Z\zz \Lambda_i Z,
		\end{equation*}
		which together with the symmetry of $\Lambda_i$ and $\Theta$ 
		implies that $Z\zz R(Z)$ is also symmetric.
		This completes the proof.
	\end{proof}

\section{Proof of Lemma \ref{le:optimality}}

\label{apx:optimality}

	\begin{proof}[Proof of Lemma \ref{le:optimality}]
		Since $( Z^{(k)}, \{X_i^{(k)}\} )$ is feasible, 
		we know $X_i^{(k)} (X_i^{(k)})\zz = Z^{(k)} (Z^{(k)})\zz$ for $\iid$.
		Thus, it can be readily verified that
		\begin{equation*}
			\begin{aligned}
				Q^{(k)} Z^{(k)} 
				= {} & \sumiid \dkh{\Lambda_i^{(k)} - \beta_i X_i^{(k)} (X_i^{(k)})\zz} Z^{(k)} \\
				= {} & \sumiid \dkh{- \Pv{Z^{(k)}} A_i A_i\zz Z^{(k)} (Z^{(k)})\zz - \beta_i Z^{(k)} (Z^{(k)})\zz} Z^{(k)} \\
				= {} & - \Pv{Z^{(k)}} A_i A_i\zz Z^{(k)} - \dkh{ \sumiid \beta_i } Z^{(k)},
			\end{aligned}
		\end{equation*}
		which implies that
		\begin{equation*}
			\projts{Z^{(k)}}{Q^{(k)}Z^{(k)}} = \projts{Z^{(k)}}{- A_i A_i\zz Z^{(k)}}.
		\end{equation*}
		According to Theorem 4.1 in \cite{Yang2014}, the first-order optimality condition 
		of  \eqref{eq:ps-sub-z-pg} can be stated as:
		\begin{equation*}
			0 \in \projts{Z^{(k)}}{Q^{(k)} Z^{(k)} + \dfrac{1}{\eta} D^{(k)} + \partial r(Z^{(k)} + D^{(k)})}.
		\end{equation*}
		Since $D^{(k)} = 0$ is the global minimizer of  \eqref{eq:ps-sub-z-pg}, we have
		\begin{equation*}
			\begin{aligned}
				0 \in {} & \projts{Z^{(k)}}{Q^{(k)} Z^{(k)} + \partial r(Z^{(k)})} \\
				= {} & \projts{Z^{(k)}}{- A_i A_i\zz Z^{(k)} + \partial r(Z^{(k)})}.
			\end{aligned}
		\end{equation*}
		We obtain the assertion of this lemma.
	\end{proof}

\section{Convergence of Algorithm \ref{alg:DSSAL1}}

\label{apx:global}

	Now we prove Theorem \ref{thm:global} 
	to establish the global convergence of Algorithm \ref{alg:DSSAL1}.
	In addition to the notations introduced in Section \ref{sec:introduction}, 
	we further adopt the followings throughout the theoretical analysis.
	The notations \(\rank \dkh{C}\) and \(\sigma_{\min} \dkh{C}\)  
	represent the rank and the smallest singular value of \(C\), respectively.
	For \(X, Y \in \stiefel\), we define \(\dsp{X}{Y} := XX\zz - YY\zz\)
	and \(\distp{X}{Y} := \norm{ \dsp{X}{Y} }\ff\), standing for, respectively, 
	the projection distance matrix and its measurement.
	
	To begin with, we provide a sketch of our proof. 
	Suppose $\{Z^{(k)}\}$ is the iteration sequence generated by Algorithm \ref{alg:DSSAL1},
	with $X_i^{(k)}$ and $\Lambda_i^{(k)}$ being the local variable and multiplier 
	of the $i$-th agent at the $k$-th iteration, respectively.  
	The proof includes the following main steps.

	\begin{enumerate}
		
		\item The sequence $\{Z^{(k)}\}$ is bounded 
		and the sequence $\{ \cL( Z^{(k)}, \{X_i^{(k)}\}, \{\Lambda_i^{(k)}\} ) \}$ is bounded from below.
		
		\item The sequence $\{Z^{(k)}\}$ satisfies 
		$\dists{Z^{(k+1)}}{X_i^{(k)}} \leq 2 ( 1 -\underline{\sigma}^2 )$, 
		and $\underline{\sigma}$  is a unified lower bound 
		of the smallest singular values of the matrices $(X_i^k)\zz Z^{k+1}(\iid)$.
		
		\item The sequence $\{ \cL ( Z^{(k)}, \{X_i^{(k)}\}, \{\Lambda_i^{(k)}\} ) \}$ 
		is monotonically non-increasing, 
		and hence is convergent.
		
		\item The sequence $\{Z^{(k)}\}$ has at least one accumulation point, 
		and any accumulation point is a first-order stationary point of 
		the sparse PCA problem \eqref{eq:opt-spca-l1}.
		
	\end{enumerate}
	
	Next we verify all the items in the above sketch by proving the following lemmas and corollaries.

	\begin{lemma}
		\label{le:dist-gk}
		Suppose $\{Z^{(k)}\}$ is the iterate sequence generated by Algorithm \ref{alg:DSSAL1}.
		Let 
		\begin{equation*}
			g^{(k)} (D) = \jkh{Q^{(k)}Z^{(k)}, D} + \dfrac{1}{2\eta} \norm{D}\fs + r(Z^{(k)} + D).
		\end{equation*}
		Then the following relationship holds for any $k \in \N$,
		\begin{equation*}
			g^{(k)} (0) - g^{(k)} (D^{(k)}) \geq \dfrac{1}{2\eta} \norm{D^{(k)}}\fs.
		\end{equation*}
	\end{lemma}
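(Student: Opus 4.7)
The plan is to exploit the strong convexity of $g^{(k)}(\cdot)$ in $D$ together with the optimality of $D^{(k)}$ as the minimizer of $g^{(k)}$ over the tangent space $\tanst{Z^{(k)}}$, which is a linear subspace containing the origin.

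First, I would observe that $g^{(k)}$ is $(1/\eta)$-strongly convex on $\Rnp$: the term $\jkh{Q^{(k)}Z^{(k)}, D}$ is linear in $D$, the quadratic term $\tfrac{1}{2\eta}\norm{D}\fs$ is $(1/\eta)$-strongly convex, and $r(Z^{(k)} + D) = \mu\norm{Z^{(k)} + D}_1$ is convex in $D$. Consequently, for any $D_1, D_2 \in \Rnp$ and any subgradient $V \in \partial g^{(k)}(D_2)$, one has
\begin{equation*}
	g^{(k)}(D_1) \geq g^{(k)}(D_2) + \jkh{V, D_1 - D_2} + \dfrac{1}{2\eta}\norm{D_1 - D_2}\fs.
\end{equation*}

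Next, I would invoke the first-order optimality condition for the convex subproblem \eqref{eq:ps-sub-z-pg}. Because the constraint $D\zz Z^{(k)} + (Z^{(k)})\zz D = 0$ defines a linear subspace $\tanst{Z^{(k)}}$, the minimizer $D^{(k)}$ satisfies the following variational inequality: there exists $V^{(k)} \in \partial g^{(k)}(D^{(k)})$ such that $\jkh{V^{(k)}, D - D^{(k)}} \geq 0$ for every $D \in \tanst{Z^{(k)}}$. In particular, since $0 \in \tanst{Z^{(k)}}$ trivially, we obtain
\begin{equation*}
	\jkh{V^{(k)}, 0 - D^{(k)}} \geq 0.
\end{equation*}

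Finally, I would apply the strong-convexity inequality with $D_1 = 0$ and $D_2 = D^{(k)}$, and the choice $V = V^{(k)}$, and combine it with the above variational inequality:
\begin{equation*}
	g^{(k)}(0) \geq g^{(k)}(D^{(k)}) + \jkh{V^{(k)}, -D^{(k)}} + \dfrac{1}{2\eta}\norm{D^{(k)}}\fs \geq g^{(k)}(D^{(k)}) + \dfrac{1}{2\eta}\norm{D^{(k)}}\fs,
\end{equation*}
which is exactly the required bound. The proof is essentially immediate once the two ingredients (strong convexity and the variational inequality on the tangent subspace) are in place; there is no real obstacle, but the one subtlety worth being careful about is that the feasible set is genuinely a linear subspace through the origin, so the optimality condition gives a two-sided variational inequality and $D = 0$ is an admissible test direction.
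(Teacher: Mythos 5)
Your proof is correct and follows essentially the same route as the paper's: both rest on the $(1/\eta)$-strong convexity of $g^{(k)}$ combined with the first-order optimality of $D^{(k)}$ over the linear subspace $\tanst{Z^{(k)}}$, tested at the admissible point $D=0$. The only cosmetic difference is that you phrase optimality as a variational inequality while the paper writes $0 \in \projts{Z^{(k)}}{\partial g^{(k)}(D^{(k)})}$; these are equivalent since the constraint set is a subspace through the origin.
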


	\begin{proof}
		Since $g^{(k)}$ is strongly convex with modulus $\dfrac{1}{\eta}$, we have
		\begin{equation}
			\label{eq:gk-sc}
			g^{(k)} (\hat{D}) 
			\geq g^{(k)} (D) + \jkh{\partial g^{(k)} (D), \hat{D} - D} 
			+ \dfrac{1}{2\eta} \norm{\hat{D} - D}\fs,
		\end{equation}
		for any $D, \hat{D} \in \Rnp$.
		In particular, if $\hat{D}, D \in \tanst{Z^{(k)}}$, it holds that
		\begin{equation*}
			\jkh{\partial g^{(k)} (D), \hat{D} - D} 
			= \jkh{\projts{Z^{(k)}}{\partial g^{(k)} (D)}, \hat{D} - D}.
		\end{equation*}
		It follows from the first-order optimality condition of \eqref{eq:ps-sub-z-pg} 
		that $0 \in \projts{Z^{(k)}}{\partial g^{(k)} (D^{(k)})}$.
		Finally, taking $\hat{D} = 0$ and $D = D^{(k)}$ in \eqref{eq:gk-sc} 
		yields the assertion of this lemma.
	\end{proof}

	\begin{lemma}
		\label{le:polar}
		Suppose $Z \in \stiefel$ and $D \in \tanst{Z}$.
		Then it holds that
		\begin{equation*}
			\norm{\proj_{\stiefel}\dkh{Z + D} - Z}\ff \leq \norm{D}\ff,
		\end{equation*}
		and
		\begin{equation*}
			\norm{\proj_{\stiefel}\dkh{Z + D} - Z - D}\ff \leq \dfrac{1}{2} \norm{D}\fs.
		\end{equation*}
	\end{lemma}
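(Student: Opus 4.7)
The plan is to exploit the fact that on the Stiefel manifold the projection $\proj_{\stiefel}(Z+D)$ has an explicit polar-decomposition form that simplifies dramatically when $D$ is tangent at $Z$. First, I would use the tangent-space condition $Z\zz D + D\zz Z = 0$ to obtain the key identity
\begin{equation*}
    (Z+D)\zz(Z+D) = I_p + D\zz D,
\end{equation*}
because the cross terms vanish. Writing $W = Z+D$, the polar projection admits the closed form $P := \proj_{\stiefel}(W) = W\left(W\zz W\right)^{-1/2} = (Z+D)(I_p + D\zz D)^{-1/2}$. I would then diagonalize $D\zz D = V\Lambda V\zz$ with $\Lambda = \mathrm{diag}(\lambda_1,\ldots,\lambda_p)$, $\lambda_i \geq 0$, so that all subsequent matrix functions of $I_p+D\zz D$ reduce to scalar functions of $\lambda_i$.

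For the first inequality, I would expand
\begin{equation*}
    \norm{P-Z}\fs = 2p - 2\tr\!\left(Z\zz P\right) = 2p - 2\tr\!\left((I_p+D\zz D)^{-1/2}\right) - 2\tr\!\left(Z\zz D\,(I_p+D\zz D)^{-1/2}\right).
\end{equation*}
The last trace vanishes: in the $V$-basis, $V\zz Z\zz D V$ is skew-symmetric (again by the tangent condition), while $V\zz(I_p+D\zz D)^{-1/2}V$ is diagonal, so their product has zero trace. This reduces the bound to the scalar inequality $2\sum_i\left(1 - (1+\lambda_i)^{-1/2}\right) \leq \sum_i \lambda_i$, which follows from
\begin{equation*}
    1 - (1+\lambda_i)^{-1/2} = \frac{\lambda_i}{(1+\lambda_i)^{1/2}\left((1+\lambda_i)^{1/2}+1\right)} \leq \frac{\lambda_i}{2}.
\end{equation*}

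For the second inequality, I would start from the factorization $P - Z - D = (Z+D)\bigl[(I_p+D\zz D)^{-1/2} - I_p\bigr]$ and compute
\begin{equation*}
    \norm{P-Z-D}\fs = \tr\!\left(\bigl[(I_p+D\zz D)^{-1/2}-I_p\bigr](I_p+D\zz D)\bigl[(I_p+D\zz D)^{-1/2}-I_p\bigr]\right),
\end{equation*}
which in the $V$-basis collapses to $\sum_i\bigl((1+\lambda_i)^{1/2}-1\bigr)^2$. Using the elementary identity $(1+\lambda)^{1/2}-1 = \lambda/\bigl((1+\lambda)^{1/2}+1\bigr) \leq \lambda/2$, this is bounded by $\tfrac{1}{4}\sum_i \lambda_i^2 \leq \tfrac{1}{4}\bigl(\sum_i \lambda_i\bigr)^2 = \tfrac{1}{4}\norm{D}_F^4$, and taking square roots yields the claim.

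No step is genuinely difficult; the only subtle point is recognizing that tangency at $Z$ produces both the simplified Gram matrix and the cancellation of the cross-trace in the first inequality, which is what makes the scalar reductions go through cleanly. Once those are in hand, the remaining work is the two one-line scalar estimates on $(1+\lambda)^{1/2}$.
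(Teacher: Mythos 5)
Your proposal is correct and follows essentially the same route as the paper's proof: the polar form $(Z+D)(I_p+D\zz D)^{-1/2}$ obtained from the tangency identity, cancellation of the skew--symmetric cross trace, and reduction to the scalar bounds on $(1+\lambda)^{\pm 1/2}$ via the singular values of $D$. You even supply the one-line scalar estimates that the paper leaves implicit, and correctly write $\sum_i\lambda_i^2\leq\bigl(\sum_i\lambda_i\bigr)^2$ as an inequality where the paper's displayed chain is slightly loose.
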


	\begin{proof}
		The proof can be found in, for example, \cite{Jiang2017}.
		For the sake of completeness, we provide a proof here.
		It follows from the orthogonality of $Z$ and the skew-symmetry of $Z\zz D$ 
		that $Z + D$ has full column rank.
		This yields that $\proj_{\stiefel} (Z + D) = (Z + D)F\inv$,
		where $F = (I_p + D\zz D)^{1/2}$.
		Since $\proj_{\stiefel} (Z + D) - Z = ( Z (I_p - F) + D ) F\inv$,
		we have
		\begin{equation*}
			\begin{aligned}
				\norm{\proj_{\stiefel}\dkh{Z + D} - Z}\fs 
				= {} & 2\tr\dkh{I_p - F\inv} - 2 \tr \dkh{F\inv Z\zz D}
				= 2\tr\dkh{I_p - F\inv} \\
				= {} & 2 \sumjjd \dkh{ 1 - \dkh{ 1 + \tilde{\sigma}_i^2 }^{-1/2} }
				\leq \sumjjd \tilde{\sigma}_i^2 = \norm{D}\fs,
			\end{aligned}
		\end{equation*}
		where $\tilde{\sigma}_1 \geq \dotsb \geq \tilde{\sigma}_d \geq 0$ are the singular values of $D$.
		Similarly, it follows from the relationship 
		$\proj_{\stiefel} (Z + D) - Z - D = (Z + D) (F\inv - I_p)$ that
		\begin{equation*}
			\begin{aligned}
				\norm{\proj_{\stiefel}\dkh{Z + D} - Z - D}\ff 
				= {} & \tr \dkh{ \dkh{I_p - F}^2 } 
				= \sumjjd \dkh{ 1 - \dkh{1 + \tilde{\sigma}_i^2}^{1/2} }^2 \\
				\leq {} & \dfrac{1}{4} \sumjjd \tilde{\sigma}_i^4 = \dfrac{1}{4} \norm{D}\ff^4,
			\end{aligned}
		\end{equation*}
		which completes the proof.
	\end{proof}

	\begin{corollary}
		\label{cor:des-z}
		Suppose $\{Z^{(k)}\}$ is the iterate sequence generated by Algorithm \ref{alg:DSSAL1}
		with the parameters satisfying Condition \ref{asp:step}.
		Then for any $k \in \N$, it holds that
		\begin{equation}
			\label{eq:dist-lag-z}
			\cL( Z^{(k)}, \{X_i^{(k)}\}, \{\Lambda_i^{(k)}\} ) - \cL( Z^{(k+1)}, \{X_i^{(k)}\}, \{\Lambda_i^{(k)}\} ) 
			\geq \bar{M} \norm{D^{(k)}}\fs,
		\end{equation}
		where $\bar{M} > 0$ is a constant defined in Section \ref{sec:convergence-analysis}. 
	\end{corollary}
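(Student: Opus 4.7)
The plan is to decompose the Lagrangian $\cL\dkh{Z, \hkh{X_i^{(k)}}, \hkh{\Lambda_i^{(k)}}}$, viewed as a function of $Z$ alone, into its smooth quadratic part $\phi(Z)$ and the non-smooth regularizer $r(Z)$, and to bound the two pieces of $\cL(Z^{(k)}) - \cL(Z^{(k+1)})$ separately. Because $X_i^{(k)}, Z^{(k)}, Z^{(k+1)} \in \stiefel$, expanding $\norm{X_i^{(k)}(X_i^{(k)})\zz - ZZ\zz}\fs$ collapses the quartic-in-$Z$ contribution to a linear functional of $ZZ\zz$ up to a constant, leaving $\phi(Z) = \tfrac{1}{2}\jkh{Q^{(k)}, ZZ\zz} + \text{const}$ on $\stiefel$ with $Q^{(k)}$ symmetric and given by \eqref{eq:Qik}.

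Setting $E := Z^{(k+1)} - Z^{(k)}$ and expanding $Z^{(k+1)}(Z^{(k+1)})\zz - Z^{(k)}(Z^{(k)})\zz$ yields the exact identity $\phi(Z^{(k+1)}) - \phi(Z^{(k)}) = \jkh{Q^{(k)} Z^{(k)}, E} + \tfrac{1}{2}\jkh{Q^{(k)} E, E}$. I would then split $E = D^{(k)} + \Delta$ with $\Delta := Z^{(k+1)} - Z^{(k)} - D^{(k)}$, invoke Lemma \ref{le:polar} to obtain $\norm{E}\ff \leq \norm{D^{(k)}}\ff$ and $\norm{\Delta}\ff \leq \tfrac{1}{2}\norm{D^{(k)}}\fs$, and apply Cauchy--Schwarz together with the spectral bound $\abs{\jkh{Q^{(k)} E, E}} \leq \norm{Q^{(k)}}_2 \norm{E}\fs$ to deduce
\[ \phi(Z^{(k)}) - \phi(Z^{(k+1)}) \geq -\jkh{Q^{(k)} Z^{(k)}, D^{(k)}} - \tfrac{1}{2}\dkh{\norm{Q^{(k)} Z^{(k)}}\ff + \norm{Q^{(k)}}_2} \norm{D^{(k)}}\fs. \]
For the regularizer I would write $r(Z^{(k)}) - r(Z^{(k+1)}) = [r(Z^{(k)}) - r(Z^{(k)} + D^{(k)})] + [r(Z^{(k)} + D^{(k)}) - r(Z^{(k+1)})]$. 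Unfolding the definition of $g^{(k)}$ in Lemma \ref{le:dist-gk} gives $r(Z^{(k)}) - r(Z^{(k)} + D^{(k)}) \geq \jkh{Q^{(k)} Z^{(k)}, D^{(k)}} + \tfrac{1}{\eta}\norm{D^{(k)}}\fs$, while the $\ell_1$-Lipschitz bound $\abs{r(X) - r(Y)} \leq \mu\sqrt{np}\norm{X-Y}\ff$ combined with Lemma \ref{le:polar} yields $r(Z^{(k)} + D^{(k)}) - r(Z^{(k+1)}) \geq -\tfrac{\mu\sqrt{np}}{2}\norm{D^{(k)}}\fs$. Summing the four inequalities, the $\pm\jkh{Q^{(k)} Z^{(k)}, D^{(k)}}$ terms cancel and leave
\[ \cL(Z^{(k)}) - \cL(Z^{(k+1)}) \geq \dkh{\tfrac{1}{\eta} - \tfrac{1}{2}\norm{Q^{(k)} Z^{(k)}}\ff - \tfrac{1}{2}\norm{Q^{(k)}}_2 - \tfrac{\mu\sqrt{np}}{2}} \norm{D^{(k)}}\fs. \]

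The main obstacle is to show that the parenthesized factor is at least $\bar M$. I would exploit the explicit formulas $W_i^{(k)} = -\Pv{X_i^{(k)}} A_i A_i\zz X_i^{(k)}$ and $\Lambda_i^{(k)} = X_i^{(k)}(W_i^{(k)})\zz + W_i^{(k)}(X_i^{(k)})\zz$, together with $X_i^{(k)}, Z^{(k)} \in \stiefel$ and the elementary bound $\norm{A_i}_2 \leq \norm{A_i}\ff$, to obtain $\norm{W_i^{(k)}}\ff \leq \norm{A_i A_i\zz}\ff \leq \norm{A_i}\fs$, $\norm{W_i^{(k)}}_2 \leq \norm{A_i}_2^2 \leq \norm{A_i}\fs$, $\norm{\Lambda_i^{(k)} Z^{(k)}}\ff \leq 2\norm{W_i^{(k)}}\ff$, $\norm{\Lambda_i^{(k)}}_2 \leq 2\norm{W_i^{(k)}}_2$, and $\norm{X_i^{(k)}(X_i^{(k)})\zz Z^{(k)}}\ff \leq \sqrt{p}$. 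Summing over $i$ with $\sum_i \norm{A_i}\fs = \norm{A}\fs$ then produces $\norm{Q^{(k)} Z^{(k)}}\ff \leq 2\norm{A}\fs + \sqrt{p}\sum_i \beta_i$ and $\norm{Q^{(k)}}_2 \leq 2\norm{A}\fs + \sum_i \beta_i$. Since $p \geq 1$ implies $(\sqrt{p}+1)/2 \leq \sqrt{p}$, averaging these two bounds and adding $\tfrac{\mu\sqrt{np}}{2}$ yields a quantity no larger than $\bar M$. Condition \ref{asp:step} with $\eta < 1/(2\bar M)$ then gives $1/\eta - \bar M > \bar M$, which closes the estimate \eqref{eq:dist-lag-z}.
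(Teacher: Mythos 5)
Your proof is correct and follows essentially the same route as the paper: reduce $\cL(\cdot,\{X_i^{(k)}\},\{\Lambda_i^{(k)}\})$ to $q^{(k)}$ up to a constant, control the quadratic part via a second-order expansion together with Lemma \ref{le:polar}, cancel the cross term $\jkh{Q^{(k)}Z^{(k)},D^{(k)}}$ using Lemma \ref{le:dist-gk}, handle $r$ by its $\mu\sqrt{np}$-Lipschitz bound, and finish with $1/\eta>2\bar M$ from Condition \ref{asp:step}. The only (harmless) variation is that you bound the curvature term by $\norm{Q^{(k)}}_2$ via the exact quadratic identity, whereas the paper uses the descent lemma with the slightly looser constant $\norm{Q^{(k)}}\ff$; both estimates are dominated by $\bar M$.
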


	\begin{proof}
		Firstly, it can be readily verified that
		\begin{equation*}
			\norm{Q^{(k)}}\ff \leq \sumiid \norm{Q_i^{(k)}}\ff 
			\leq \sumiid \dkh{ 2 \norm{A_i}\fs + \sqrt{p} \beta_i }.
		\end{equation*}
		Let $\bar{q}^{(k)} (Z) = \tr (Z\zz Q^{(k)} Z) / 2$ be the smooth part 
		of the objective function $q^{(k)} (Z)$ in  \eqref{eq:ps-sub-z}.
		Since $\nabla \bar{q}^{(k)}$ is Lipschitz continuous 
		with the corresponding Lipschitz constant $\norm{Q^{(k)}}\ff $,
		we have
		\begin{equation*}
			\begin{aligned}
				\bar{q}^{(k)} (Z^{(k+1)}) - \bar{q}^{(k)} (Z^{(k)}) 
				\leq {} & \jkh{Q^{(k)} Z^{(k)}, Z^{(k+1)} - Z^{(k)}} \\
				& + \dfrac{1}{2} \norm{Q^{(k)}}\ff \norm{Z^{(k+1)} - Z^{(k)}}\fs.
			\end{aligned}
		\end{equation*}
		It follows from Lemma \ref{le:polar} that
		\begin{equation*}
			\begin{aligned}
				\jkh{Q^{(k)} Z^{(k)}, Z^{(k+1)} - Z^{(k)} - D^{(k)}}
				\leq {} & \norm{Q^{(k)} Z^{(k)}}\ff \norm{Z^{(k+1)} - Z^{(k)} - D^{(k)}}\ff \\
				\leq {} & \sumiid \dkh{ \norm{A_i}\fs + \dfrac{\sqrt{p}}{2} \beta_i } \norm{D^{(k)}}\fs,
			\end{aligned}
		\end{equation*}
		and
		\begin{equation*}
			\begin{aligned}
				\dfrac{1}{2} \norm{Q^{(k)}}\ff \norm{Z^{(k+1)} - Z^{(k)}}\fs
				\leq \sumiid \dkh{ \norm{A_i}\fs + \dfrac{\sqrt{p}}{2} \beta_i } \norm{D^k}\fs.
			\end{aligned}
		\end{equation*}
		Combing the above three inequalities, we can obtain that
		\begin{equation*}
			\begin{aligned}
				\bar{q}^{(k)} (Z^{(k+1)}) - \bar{q}^{(k)} (Z^{(k)}) 
				\leq \jkh{Q^{(k)} Z^{(k)}, D^{(k)}} 
				+ \sumiid \dkh{ 2\norm{A_i}\fs + \beta_i \sqrt{p} } \norm{D^{(k)}}\fs.
			\end{aligned}
		\end{equation*}
		It follows from Lemma \ref{le:dist-gk} that
		\begin{equation*}
			\begin{aligned}
				& \jkh{Q^{(k)} Z^{(k)}, D^{(k)}} + r(Z^{(k)} + D^{(k)}) - r(Z^{(k)}) \\
				& = g^{(k)} (D^{(k)}) - g^{(k)} (0) - \dfrac{1}{2\eta} \norm{D^{(k)}}\fs
				\leq - \dfrac{1}{\eta} \norm{D^{(k)}}\fs,
			\end{aligned}
		\end{equation*}
		which infers that
		\begin{equation*}
			\begin{aligned}
				& \bar{q}^{(k)} (Z^{(k+1)}) - \bar{q}^{(k)} (Z^{(k)}) + r(Z^{(k)} + D^{(k)}) - r(Z^{(k)}) \\
				& \leq \sumiid \dkh{ 2\norm{A_i}\fs 
				+ \beta_i \sqrt{p} } \norm{D^{(k)}}\fs - \dfrac{1}{\eta} \norm{D^{(k)}}\fs.
			\end{aligned}
		\end{equation*}
		This together with the Lipschitz continuity of $r(Z)$ yields that
		\begin{equation*}
			\begin{aligned}
				& q^{(k)} (Z^{(k+1)}) - q^{(k)} (Z^{(k)}) \\
				& = \bar{q}^{(k)} (Z^{(k+1)}) - \bar{q}^{(k)} (Z^{(k)}) + r(Z^{(k+1)}) - r(Z^{(k)}) \\
				& = \bar{q}^{(k)} (Z^{(k+1)}) - \bar{q}^{(k)} (Z^{(k)}) 
				+ r(Z^{(k)} + D^{(k)}) - r(Z^{(k)}) \\
				& \quad + r(Z^{(k+1)}) - r(Z^{(k)} + D^{(k)}) \\
				& \leq \bar{q}^{(k)} (Z^{(k+1)}) - \bar{q}^{(k)} (Z^{(k)})  
				+ r(Z^{(k)} + D^{(k)}) - r(Z^{(k)}) \\
				& \quad + \mu \sqrt{np} \norm{Z^{(k+1)} - Z^{(k)} - D^{(k)}}\ff \\
				& \leq \sumiid \dkh{ 2\norm{A_i}\fs + \beta_i \sqrt{p} } \norm{D^{(k)}}\fs 
				- \dfrac{1}{\eta} \norm{D^{(k)}}\fs + \dfrac{\mu}{2} \sqrt{np} \norm{D^{(k)}}\fs  \\
				& = \dkh{ \bar{M} - \dfrac{1}{\eta} } \norm{D^{(k)}}\fs.
			\end{aligned}
		\end{equation*}
		Here, $\bar{M} > 0$ is a constant defined in Section \ref{sec:convergence-analysis}.
		According to Condition \ref{asp:step}, we know that $\bar{M} - 1/\eta \leq -\bar{M}$.
		Hence, we finally arrive at
		\begin{equation*}
			\begin{aligned}
				\cL( Z^{(k)}, \{X_i^{(k)}\}, \{\Lambda_i^{(k)}\}) - \cL( Z^{(k+1)}, \{X_i^{(k)}\}, \{\Lambda_i^{(k)}\}) 
				= {} & q^{(k)} (Z^{(k)}) - q^{(k)} (Z^{(k+1)}) \\
				\geq {} & \bar{M} \norm{D^{(k)}}\fs.
			\end{aligned}
		\end{equation*}
		This completes the proof.
	\end{proof}

	\begin{lemma}
		\label{le:dist-zk}
		Suppose $\{Z^{(k)}\}$ is the iterate sequence generated by Algorithm \ref{alg:DSSAL1}
		with the parameters satisfying Condition \ref{asp:step}.
		Then for any $k \in \N$, it can be verified that
		\begin{equation}\label{eq:dist-zk}
			\dists{Z^{(k+1)}}{X_i^{(k)}}
			\leq \rho \sumjjd \dists{Z^{(k)}}{X_j^{(k)}}
			+ \dfrac{8}{\beta_i} \dkh{\sqrt{p} \norm{A}\fs + \mu n p},
		\end{equation}
		where $\rho \geq 1$ is a constant defined in Section \ref{sec:convergence-analysis}.
	\end{lemma}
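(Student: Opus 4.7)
The plan is to start from the Lagrangian descent in the $Z$-update already established by Corollary \ref{cor:des-z}, which implies in particular
$$\cL(Z^{(k+1)}, \{X_j^{(k)}\}, \{\Lambda_j^{(k)}\}) \leq \cL(Z^{(k)}, \{X_j^{(k)}\}, \{\Lambda_j^{(k)}\}).$$
Because the local variables $X_j^{(k)}$ and the multipliers $\Lambda_j^{(k)}$ are held fixed across this inequality, the terms $f_j(X_j^{(k)})$ in $\cL_j$ cancel, and only the $Z$-dependent penalty, cross, and regularizer terms survive. After rearranging, I would obtain
\begin{equation*}
\sumjjd \frac{\beta_j}{4}\dists{X_j^{(k)}}{Z^{(k+1)}} \leq \sumjjd \frac{\beta_j}{4}\dists{X_j^{(k)}}{Z^{(k)}} + \bigl[r(Z^{(k)}) - r(Z^{(k+1)})\bigr] + \frac{1}{2}\sumjjd \jkh{\Lambda_j^{(k)}, Z^{(k)}(Z^{(k)})\zz - Z^{(k+1)}(Z^{(k+1)})\zz}.
\end{equation*}

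The next step is to control the two error-type quantities on the right using the structural form of $\Lambda_j^{(k)}$. For the regularizer, since $Z \in \stiefel$ implies every entry of $Z$ has magnitude at most $1$, the bound $r(Z) \leq \mu\|Z\|_1 \leq \mu np$ holds, giving $|r(Z^{(k)}) - r(Z^{(k+1)})| \leq 2\mu np$. For the multiplier term, I would exploit the low-rank decomposition $\Lambda_j^{(k)} = X_j^{(k)}(W_j^{(k)})\zz + W_j^{(k)}(X_j^{(k)})\zz$ with $(W_j^{(k)})\zz X_j^{(k)} = 0$ (since $W_j^{(k)} = -\Pv{X_j^{(k)}} A_jA_j\zz X_j^{(k)}$). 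This identity yields $\jkh{\Lambda_j^{(k)}, X_j^{(k)}(X_j^{(k)})\zz} = 0$, so the multiplier inner product can be rewritten in terms of $\dsp{X_j^{(k)}}{Z^{(k)}}$ and $\dsp{X_j^{(k)}}{Z^{(k+1)}}$, and then controlled by Cauchy--Schwarz together with the norm bound $\|\Lambda_j^{(k)}\|\ff = 2\|W_j^{(k)}\|\ff \leq 2\sqrt{p}\|A_j\|_2^2$.

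I would then apply Young's inequality with a careful balancing, splitting each cross-product $\|\Lambda_j^{(k)}\|\ff \distp{X_j^{(k)}}{Z^{(k+1)}}$ into a small multiple of $\beta_j \dists{X_j^{(k)}}{Z^{(k+1)}}$ (absorbed into the LHS) and a reciprocal multiple of $\|\Lambda_j^{(k)}\|\fs/\beta_j$. After this absorption, the lower bound on $\beta_j$ in Condition \ref{asp:beta} collapses $\sumjjd \|\Lambda_j^{(k)}\|\fs/\beta_j$ into a clean constant of order $\sqrt{p}\|A\|\fs$. Dropping the nonnegative summands for $j\neq i$ on the remaining LHS then isolates $\frac{\beta_i}{4}\dists{X_i^{(k)}}{Z^{(k+1)}}$. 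Dividing through by $\beta_i/4$ and using the uniform ratio $\beta_j \leq \rho\beta_i$ from Condition \ref{asp:beta} produces precisely the factor $\rho$ in front of $\sumjjd \dists{X_j^{(k)}}{Z^{(k)}}$, while the residual constants assemble into $\frac{8}{\beta_i}(\sqrt{p}\|A\|\fs + \mu np)$.

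The main obstacle will be the tight bookkeeping in the Young's-inequality step: the absorbing coefficient must be small enough to leave a positive fraction of the LHS sum, yet large enough that the residual multiplier term divided by $\beta_j$ reduces to a quantity bounded by $\sqrt{p}\|A\|\fs$ rather than something coarser like $p\sum_j\|A_j\|_2^4$. Making this work cleanly requires both the identity $\jkh{\Lambda_j^{(k)}, X_j^{(k)}(X_j^{(k)})\zz}=0$ to kill cross terms that would otherwise blow up, and the lower bound on $\beta_i$ in Condition \ref{asp:beta} to control the $\|\Lambda_j^{(k)}\|\fs/\beta_j$ residual. The uniform ratio $\beta_j \leq \rho\beta_i$ is what allows the per-index bound \eqref{eq:dist-zk} to feature only a single $\rho$ rather than a ratio of $\beta$'s.
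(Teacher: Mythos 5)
Your first half coincides exactly with the paper's proof: start from the descent of the $Z$-update (equivalently $q^{(k)}(Z^{(k)})-q^{(k)}(Z^{(k+1)})\geq 0$ from Corollary \ref{cor:des-z}), cancel the $Z$-independent terms, and bound $\abs{r(Z^{(k)})-r(Z^{(k+1)})}\leq 2\mu np$. The penalty term is then handled in the paper by the exact identity
\begin{equation*}
\tr\dkh{X_j^{(k)}(X_j^{(k)})\zz\,\dsp{Z^{(k+1)}}{Z^{(k)}}}=\tfrac{1}{2}\dists{Z^{(k)}}{X_j^{(k)}}-\tfrac{1}{2}\dists{Z^{(k+1)}}{X_j^{(k)}},
\end{equation*}
so the weighted inequality $\sumjjd\beta_j\dists{Z^{(k+1)}}{X_j^{(k)}}\leq\sumjjd\beta_j\dists{Z^{(k)}}{X_j^{(k)}}+8\sqrt{p}\norm{A}\fs+8\mu np$ follows with \emph{no} loss of constant, and dividing by $\beta_i$ with $\beta_j\leq\rho\beta_i$ finishes.

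The gap is in your treatment of the multiplier term. The paper bounds $\jkh{\Lambda_j^{(k)},\dsp{Z^{(k)}}{Z^{(k+1)}}}$ by a single Cauchy--Schwarz, using that $\distp{Z^{(k+1)}}{Z^{(k)}}\leq 2\sqrt{p}$ \emph{uniformly} and $\norm{\Lambda_j^{(k)}}\ff\leq 2\norm{A_j}\fs$, so this term is an absolute constant $4\sqrt{p}\norm{A}\fs$; no splitting through $X_j^{(k)}(X_j^{(k)})\zz$, no Young's inequality, and no lower bound on $\beta_j$ are needed. Your absorption scheme cannot recover the stated inequality: if you absorb a fraction $\epsilon\beta_j\dists{X_j^{(k)}}{Z^{(k+1)}}$ into the left-hand side and add a fraction $\epsilon'\beta_j\dists{X_j^{(k)}}{Z^{(k)}}$ to the right, the surviving coefficients are $(\tfrac14-\epsilon)\beta_j$ and $(\tfrac14+\epsilon')\beta_j$, and after dividing by $\beta_i(\tfrac14-\epsilon)$ the multiplicative factor becomes $\rho(\tfrac14+\epsilon')/(\tfrac14-\epsilon)>\rho$ for any admissible choice. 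Since the exact factor $\rho$ and the exact additive constant $\tfrac{8}{\beta_i}(\sqrt{p}\norm{A}\fs+\mu np)$ are what Condition \ref{asp:beta} is calibrated against in the induction of Lemma \ref{le:svd-xz}, your route proves only a weakened variant of \eqref{eq:dist-zk} (and additionally imports the lower bounds of Condition \ref{asp:beta}, which the paper's proof of this lemma does not use). The fix is simply to drop the Young's-inequality step and estimate the multiplier term directly against $\dsp{Z^{(k)}}{Z^{(k+1)}}$.
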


	\begin{proof}
		The inequality \eqref{eq:dist-lag-z} directly results in 
		the following relationship. 
		\begin{equation*}
			{q}^{(k)} (Z^{(k)}) - {q}^{(k)} (Z^{(k+1)}) \geq 0.
		\end{equation*}
		According to the definition of $q^{(k)}$, it follows that
		\begin{equation*}
			\begin{aligned}
				0 \leq {} & \dfrac{1}{2} \tr \dkh{ (Z^{(k)})\zz Q^{(k)} Z^{(k)} } 
				- \dfrac{1}{2} \tr \dkh{ (Z^{(k+1)})\zz Q^{(k)} Z^{(k+1)} } 
				+ r(Z^{(k)}) - r(Z^{(k+1)}) \\
				\leq {} & \dfrac{1}{2} \sumjjd \tr \dkh{ \dkh{ \beta_j X_j^{(k)} (X_j^{(k)})\zz 
				- \Lambda_j^{(k)} }\dsp{Z^{(k+1)}}{Z^{(k)}} }
				+ 2 \mu n p.
			\end{aligned}
		\end{equation*}
		By straightforward calculations, we can deduce that
		\begin{equation*}
			\begin{aligned}
				\sumjjd \tr \dkh{  \Lambda_j^{(k)} \dsp{Z^{(k)}}{Z^{(k+1)}} } 
				\leq {} & \sumjjd \norm{\Lambda_j^{(k)}}\ff \distp{Z^{(k+1)}}{Z^{(k)}} \\
				\leq {} & 4 \sqrt{p} \sumjjd \norm{A_j}\fs
				= 4\sqrt{p} \norm{A}\fs,
			\end{aligned}
		\end{equation*}
		and
		\begin{equation*}
			\begin{aligned}
				& \sumjjd \beta_j \tr \dkh{ X_j^{(k)} (X_j^{(k)})\zz \dsp{Z^{(k+1)}}{Z^{(k)}} } \\
				& = \dfrac{1}{2} \sumjjd \beta_j \dists{Z^{(k)}}{X_j^{(k)}}
				- \dfrac{1}{2} \sumjjd \beta_j \dists{Z^{(k+1)}}{X_j^{(k)}}.
			\end{aligned}
		\end{equation*}
		The above three inequalities yield that
		\begin{equation*}
			\sumjjd \beta_j \dists{Z^{(k+1)}}{X_j^{(k)}}
			\leq \sumjjd \beta_j \dists{Z^{(k)}}{X_j^{(k)}} + 8 \sqrt{p} \norm{A}\fs + 8 \mu n p,
		\end{equation*}
		which further implies that
		\begin{equation*}
			\begin{aligned}
				\dists{Z^{(k+1)}}{X_i^{(k)}}
				\leq {} & \dfrac{1}{\beta_i}\sumjjd \beta_j \dists{Z^{(k+1)}}{X_j^{(k)}} \\
				\leq {} & \rho \sumjjd \dists{Z^{(k)}}{X_j^{(k)}} 
				+ \dfrac{8}{\beta_i}\dkh{\sqrt{p} \norm{A}\fs + \mu n p}.
			\end{aligned}
		\end{equation*}
		This completes the proof.
	\end{proof}

	\begin{lemma}\label{le:des-x}
		Suppose $Z^{(k+1)}$ is the $(k+1)$-th iterate generated by Algorithm \ref{alg:DSSAL1} 
		and satisfies the following condition: 
		\begin{equation*}
			\dists{Z^{(k+1)}}{X_i^{(k)}} \leq 2 \dkh{1 -\underline{\sigma}^2},
		\end{equation*}
		where $\underline{\sigma} \in (0,1)$ is a constant defined in Condition \ref{asp:step}. 
		Let the algorithm parameters satisfy Conditions \ref{asp:step} and \ref{asp:beta}. 
		Then for any $\iid$, it holds that
		\begin{equation}\label{eq:des-x}
			h_i^{(k)} ( X_i^{(k)} ) - h_i^{(k)} ( X_i^{(k+1)} ) 
			\geq \frac{1}{4} \underline{\sigma}^2 c_i \beta_i \dists{Z^{(k+1)}}{X_i^{(k)}},
		\end{equation}
		and
		\begin{equation}\label{eq:dist-xk+1-zk}
			\dists{Z^{(k+1)}}{X_i^{(k+1)}}
			\leq \dkh{1 - c_i \underline{\sigma}^2} \dists{Z^{(k+1)}}{X_i^{(k)}} 
			+ \dfrac{12}{\beta_i} \sqrt{p} \norm{A_i}\fs.
		\end{equation}
	\end{lemma}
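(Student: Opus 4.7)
The plan is to prove the two inequalities in sequence: first derive \eqref{eq:des-x} by plugging a suitable lower bound on $\norm{\Pv{X_i^{(k)}} H_i^{(k)} X_i^{(k)}}\fs$ into the sufficient-decrease condition \eqref{eq:ps-sub-x-con-1}, and then use \eqref{eq:des-x} as the driver for \eqref{eq:dist-xk+1-zk} after rewriting the decrease of $h_i^{(k)}$ so that $\dists{Z^{(k+1)}}{X_i^{(k+1)}}$ appears explicitly.

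For \eqref{eq:des-x}, I would first simplify $\Pv{X_i^{(k)}} H_i^{(k)} X_i^{(k)}$ via the multiplier formulas \eqref{eq:Wi}--\eqref{eq:ps-mult}. A direct computation using $\Pv{X_i^{(k)}} X_i^{(k)} = 0$ gives $(W_i^{(k)})\zz X_i^{(k)} = 0$ and hence $\Lambda_i^{(k)} X_i^{(k)} = W_i^{(k)} = -\Pv{X_i^{(k)}} A_iA_i\zz X_i^{(k)}$; substituting into \eqref{eq:Hik} and applying $\Pv{X_i^{(k)}}$ annihilates the $A_iA_i\zz$ block and leaves
\begin{equation*}
\Pv{X_i^{(k)}} H_i^{(k)} X_i^{(k)} = \beta_i \Pv{X_i^{(k)}} Z^{(k+1)}(Z^{(k+1)})\zz X_i^{(k)}.
\end{equation*}
Next, I would combine the identity $\norm{\Pv{X}Z}\fs = \tfrac{1}{2}\dists{Z}{X}$ for $X,Z\in\stiefel$ with the expansion $\dists{Z^{(k+1)}}{X_i^{(k)}} = 2\sum_{j=1}^p(1-\sigma_j^2)$, where $\sigma_1 \geq \cdots \geq \sigma_p \geq 0$ are the singular values of $(Z^{(k+1)})\zz X_i^{(k)}$; the standing hypothesis then forces every $\sigma_j^2 \geq \underline{\sigma}^2$. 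Using $\norm{BM}\fs \geq \sigma_{\min}^2(M)\norm{B}\fs$ with $B=\Pv{X_i^{(k)}} Z^{(k+1)}$ and $M=(Z^{(k+1)})\zz X_i^{(k)}$ yields $\norm{\Pv{X_i^{(k)}} H_i^{(k)} X_i^{(k)}}\fs \geq \tfrac{1}{2}\beta_i^2 \underline{\sigma}^2 \dists{Z^{(k+1)}}{X_i^{(k)}}$. Plugging this into \eqref{eq:ps-sub-x-con-1} and invoking Condition \ref{asp:beta}, which implies $c_i^{\prime}\norm{A_i}_2^2 + \beta_i \leq 2\beta_i$, delivers \eqref{eq:des-x}.

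For \eqref{eq:dist-xk+1-zk}, I would exploit $\tr(Z^{(k+1)}(Z^{(k+1)})\zz XX\zz) = p - \tfrac{1}{2}\dists{Z^{(k+1)}}{X}$ to separate the $\beta_i$-block of $H_i^{(k)}$ and rewrite
\begin{equation*}
h_i^{(k)}(X_i^{(k)}) - h_i^{(k)}(X_i^{(k+1)}) = \tfrac{1}{2}\tr\dkh{(A_iA_i\zz + \Lambda_i^{(k)})\dsp{X_i^{(k+1)}}{X_i^{(k)}}} + \tfrac{\beta_i}{4}\dkh{\dists{Z^{(k+1)}}{X_i^{(k)}} - \dists{Z^{(k+1)}}{X_i^{(k+1)}}}.
\end{equation*}
Substituting the lower bound \eqref{eq:des-x} on the left-hand side and isolating $\dists{Z^{(k+1)}}{X_i^{(k+1)}}$ leaves a residual $(2/\beta_i)\tr(\cdot)$, which I would bound by Cauchy--Schwarz using $\norm{\Lambda_i^{(k)}}\ff \leq 2\norm{A_i}\fs$ from \eqref{eq:multiplier} (so that $\norm{A_iA_i\zz + \Lambda_i^{(k)}}\ff \leq 3\norm{A_i}\fs$) together with $\norm{\dsp{X_i^{(k+1)}}{X_i^{(k)}}}\ff \leq \sqrt{2p}$; this produces the target additive term $12\sqrt{p}\norm{A_i}\fs/\beta_i$.

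The main technical obstacle is extracting the pointwise bound $\sigma_{\min}((Z^{(k+1)})\zz X_i^{(k)}) \geq \underline{\sigma}$ from the single scalar hypothesis $\dists{Z^{(k+1)}}{X_i^{(k)}} \leq 2(1-\underline{\sigma}^2)$; this step hinges on the fact that each singular value lies in $[0,1]$, so that $\sum_j(1-\sigma_j^2)$ dominates every individual summand, and it is exactly where the constraint $\underline{\sigma} < 1$ is used. Everything else amounts to algebra with the closed-form multiplier \eqref{eq:Wi}--\eqref{eq:ps-mult} and the standard projection-distance identities on the Stiefel manifold.
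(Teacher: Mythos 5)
Your proposal is correct and follows essentially the same route as the paper's proof: the same reduction of $\Pv{X_i^{(k)}} H_i^{(k)} X_i^{(k)}$ to $\beta_i \Pv{X_i^{(k)}} Z^{(k+1)}(Z^{(k+1)})\zz X_i^{(k)}$ via the low-rank multiplier formula, the same singular-value lower bound $\sigma_{\min}\dkh{(Z^{(k+1)})\zz X_i^{(k)}} \geq \underline{\sigma}$ extracted from the hypothesis, and the same trace decomposition plus Cauchy--Schwarz for the second inequality. The only cosmetic difference is that you bound $\norm{\Pv{X_i^{(k)}} Z^{(k+1)}(Z^{(k+1)})\zz X_i^{(k)}}\fs$ via $\norm{BM}\fs \geq \sigma_{\min}^2(M)\norm{B}\fs$ rather than the paper's direct computation $\tr(Y_i^{(k)}) - \tr((Y_i^{(k)})^2) = \sum_j \hat{\sigma}_j^2(1-\hat{\sigma}_j^2)$, which yields the identical constant.
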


	\begin{proof}
		It follows from Condition \ref{asp:beta} that $\beta_i > c_i^{\prime} \norm{A_i}_2^2$, 
		which together with \eqref{eq:ps-sub-x-con-1} yields that
		\begin{equation}\label{eq:des-x-1}
			h_i^{(k)} ( X_i^{(k)} ) - h_i^{(k)} ( X_i^{(k+1)} ) 
			\geq \dfrac{c_i}{2\beta_i} \norm{ \Pv{X_i^{(k)}} H_i^{(k)} X_i^{(k)} }\fs,
		\end{equation}
		And it can be checked that
		\begin{equation}
			\label{eq:rgrad-h}
			\begin{aligned}
				& \Pv{X_i^{(k)}} H_i^{(k)} X_i^{(k)}
				= \Pv{X_i^{(k)}} \dkh{ A_i A_i\zz X_i^{(k)} 
				+ \Lambda_i^{(k)} X_i^{(k)} + \beta_i Z^{(k+1)}(Z^{(k+1)})\zz X_i^{(k)} } \\
				& = \Pv{X_i^{(k)}} \dkh{ A_i  A_i\zz X_i^{(k)} - \Pv{X_i^{(k)}} A_i A_i\zz X_i^{(k)} }
				-\beta_i \Pv{X_i^{(k)}} Z^{(k+1)}(Z^{(k+1)})\zz X_i^{(k)} \\
				& = -\beta_i \Pv{X_i^{(k)}} Z^{(k+1)}(Z^{(k+1)})\zz X_i^{(k)}.
			\end{aligned}
		\end{equation}
		Suppose $\hat{\sigma}_1, \dotsc, \hat{\sigma}_p$ are the singular values of $(X_i^{(k)})\zz Z^{(k+1)}$.
		It is clear that $0 \leq \hat{\sigma}_i \leq 1$ for any $i = 1, \dotsc, p$
		due to the orthogonality of $X_i^{(k)}$ and $Z^{(k+1)}$.
		On the one hand, we have
		\begin{equation*}
			\dists{Z^{(k+1)}}{X_i^{(k)}}
			= \norm{ X_i^{(k)}(X_i^{(k)})\zz - Z^{(k+1)}(Z^{(k+1)})\zz }\fs 
			= 2\sum\limits_{j=1}^p \dkh{ 1 - \hat{\sigma}_j^2 }.
		\end{equation*}
		On the other hand, it follows from $\dists{Z^{(k+1)}}{X_i^{(k)}} \leq 2 \dkh{1 -\underline{\sigma}^2}$ that
		\begin{equation*}
			\sigma_{\min}\dkh{ (X_i^{(k)})\zz Z^{(k+1)} } \geq \underline{\sigma}.
		\end{equation*}
		Let $Y_i^{(k)} = (X_i^{(k)})\zz Z^{(k+1)}(Z^{(k+1)})\zz X_i^{(k)}$. 
		By straightforward calculations, we can derive that
		\begin{equation}\label{eq:pzzx}
			\begin{aligned}
				& \norm{\Pv{X_i^{(k)}} Z^{(k+1)}(Z^{(k+1)})\zz X_i^{(k)}}\fs 
				= \tr\dkh{Y_i^{(k)}} - \tr\dkh{(Y_i^{(k)})^2}
				= \sum\limits_{j=1}^p \hat{\sigma}_j^2 \dkh{ 1 - \hat{\sigma}_j^2 } \\
				& \geq \sum\limits_{j=1}^p \sigma_{\min}^2\dkh{ (X_i^{(k)})\zz Z^{(k+1)} } 
				\dkh{ 1 - \hat{\sigma}_j^2 }
				\geq \dfrac{1}{2} \underline{\sigma}^2 \dists{Z^{(k+1)}}{X_i^{(k)}}.
			\end{aligned}
		\end{equation}
		Combining \eqref{eq:des-x-1}, \eqref{eq:rgrad-h} and \eqref{eq:pzzx}, 
		we acquire the assertion \eqref{eq:des-x}.
		Then it follows from the definition of $h_i^{(k)}$ that
		\begin{equation*}
			\begin{aligned}
				c_i \underline{\sigma}^2 \dists{Z^{(k+1)}}{X_i^{(k)}}
				\leq {} & 2 \tr \dkh{ Z^{(k+1)}(Z^{(k+1)})\zz \dsp{X_i^{(k+1)}}{X_i^{(k)}} } \\
				& + \dfrac{2}{\beta_i} \tr \dkh{ \dkh{ A_i A_i\zz + \Lambda_i^{(k)} } 
				\dsp{X_i^{(k+1)}}{X_i^{(k)}} }.
			\end{aligned}
		\end{equation*}
		By straightforward calculations, we can obtain that
		\begin{equation*}
			\begin{aligned}
				\tr \dkh{ \dkh{ A_i A_i\zz + \Lambda_i^{(k)} } \dsp{X_i^{(k+1)}}{X_i^{(k)}} }
				\leq {} & \norm{ A_i A_i\zz + \Lambda_i^{(k)} }\ff \distp{X_i^{(k+1)}}{X_i^{(k)}} \\
				\leq {} & 6 \sqrt{p}\norm{A_i}\fs,
			\end{aligned}
		\end{equation*}
		and
		\begin{equation*}
			\begin{aligned}
				& \tr \dkh{ Z^{(k+1)}(Z^{(k+1)})\zz \dsp{X_i^{(k+1)}}{X_i^{(k)}} } \\
				& = \dfrac{1}{2} \dists{Z^{(k+1)}}{X_i^{(k)}}
				- \dfrac{1}{2} \dists{Z^{(k+1)}}{X_i^{(k+1)}}.
			\end{aligned}
		\end{equation*}
		The above three relationships yield \eqref{eq:dist-xk+1-zk}.
		We complete the proof.
	\end{proof}

	\begin{lemma}\label{le:svd-xz}
		Suppose $\{Z^{(k)}\}$ is the iterate sequence generated by Algorithm \ref{alg:DSSAL1}
		initiated from $Z^{(0)} \in \stiefel$
		with the parameters satisfying Conditions \ref{asp:step} and \ref{asp:beta}.
		Then for any $\iid$ and $k \in \N$, it holds that 
		\begin{equation}\label{eq:svd-xz}
			\dists{Z^{(k+1)}}{X_i^{(k)}} \leq 2 \dkh{1 -\underline{\sigma}^2}.
		\end{equation}
	\end{lemma}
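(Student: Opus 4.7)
I would prove this by induction on $k$, establishing the invariant $\dists{Z^{(k+1)}}{X_i^{(k)}} \leq 2(1-\underline{\sigma}^2)$ for every $i$ and every $k \geq 0$. For the base case $k=0$, the initialization $X_i^{(0)} = Z^{(0)}$ gives $\dists{Z^{(0)}}{X_j^{(0)}} = 0$ for all $j$, so substituting into Lemma~\ref{le:dist-zk} yields $\dists{Z^{(1)}}{X_i^{(0)}} \leq 8(\sqrt{p}\norm{A}\fs + \mu np)/\beta_i$. The middle lower bound in Condition~\ref{asp:beta} then makes this at most $1-\underline{\sigma}^2 \leq 2(1-\underline{\sigma}^2)$.

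For the inductive step, assume the invariant holds for all $j \leq k-1$. Since the hypothesis of Lemma~\ref{le:des-x} is then satisfied at the previous iteration, I would first use its conclusion \eqref{eq:dist-xk+1-zk} together with the third lower bound $\beta_j \geq 6\sqrt{p}\norm{A_j}\fs/(c_j\underline{\sigma}^2(1-\underline{\sigma}^2))$ of Condition~\ref{asp:beta} to deduce that $\dists{Z^{(k)}}{X_j^{(k)}} \leq 2(1-\underline{\sigma}^2)$ for every $j$. The next move must combine Lemmas~\ref{le:dist-zk} and~\ref{le:des-x}; a naive pointwise substitution unfortunately produces an $\rho d$ factor on the right-hand side, so I would instead work with the weighted quantity $T_k := \sum_j \beta_j \dists{Z^{(k+1)}}{X_j^{(k)}}$. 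The intermediate estimate inside the proof of Lemma~\ref{le:dist-zk}, obtained \emph{before} dividing by $\beta_i$, gives the clean (no-$\rho$) bound $T_k \leq \sum_j \beta_j \dists{Z^{(k)}}{X_j^{(k)}} + 8(\sqrt{p}\norm{A}\fs + \mu np)$. Summing \eqref{eq:dist-xk+1-zk} with weights $\beta_j$ and plugging this in yields a contractive recursion of the form $T_k \leq (1 - \bar{c}\underline{\sigma}^2)\, T_{k-1} + C$, where $\bar{c} = \min_j c_j$ and $C$ is an explicit constant. Since $\beta_i \dists{Z^{(k+1)}}{X_i^{(k)}} \leq T_k$, one can either show by a second induction that $T_k \leq 2(1-\underline{\sigma}^2)\min_i \beta_i$ is preserved, or bound $T_k$ by a geometric telescoping against its fixed point; either way, the pointwise conclusion follows after invoking the $\beta_i \geq \xi_i \norm{A_i}_2^2$ lower bound of Condition~\ref{asp:beta}.

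The main obstacle will be the constant-chasing: the $\xi_i$ in Condition~\ref{asp:beta} is defined via a four-way maximum whose third and fourth entries explicitly depend on $\rho d$, and each of the four terms appears tuned to absorb a specific error in the chain above (the subproblem inexactness constant $c_i'$, the cross-term inequalities, and the $\rho d$-dependent price for converting the weighted bound on $T_k$ into the pointwise bound on $\dists{Z^{(k+1)}}{X_i^{(k)}}$). Making sure that the invariant $2(1-\underline{\sigma}^2)$ is preserved exactly — rather than degrading by a small factor per iteration — is where all the technical bookkeeping will have to be done.
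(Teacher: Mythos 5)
Your base case matches the paper's exactly, but your inductive step takes a different route that does not close under the stated conditions, and it misses the mechanism the paper actually relies on. The paper's proof never works with the weighted sum $T_k$; instead it invokes the \emph{second} inexactness condition \eqref{eq:ps-sub-x-con-2} together with the requirement $\delta_i < \underline{\sigma}/(2\sqrt{\rho d})$ from Condition \ref{asp:step} to derive the strong per-iteration contraction \eqref{eq:dist-zk+1-xk+1}, namely $\distp{Z^{(k+1)}}{X_i^{(k+1)}} \leq (2\rho d)^{-1/2}\, \distp{Z^{(k+1)}}{X_i^{(k)}}$. Squaring and feeding this into \eqref{eq:dist-zk} makes the amplification factor $\rho\sum_j(\cdot)$ collapse to $\tfrac{1}{2d}\sum_j(\cdot) \le 1-\underline{\sigma}^2$, which added to the $8\beta_i^{-1}(\sqrt{p}\norm{A}\fs+\mu np) \le 1-\underline{\sigma}^2$ term gives exactly $2(1-\underline{\sigma}^2)$. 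The $\sqrt{\rho d}$ in the bound on $\delta_i$ is engineered precisely for this cancellation; your proposal never uses $\delta_i$ or \eqref{eq:ps-sub-x-con-2} at all, relying only on the weak contraction $(1-c_i\underline{\sigma}^2)$ from \eqref{eq:dist-xk+1-zk}, which in the paper serves merely as a stepping stone to verify $\sigma_{\min}\dkh{(X_i^{(k+1)})\zz Z^{(k+1)}}\ge\underline{\sigma}$ before the real contraction is established.

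The concrete gap is in the constants of your $T_k$ recursion. Carrying out your plan gives $T_{k+1} \leq (1-\bar{c}\underline{\sigma}^2)T_k + C$ with $C = 20\sqrt{p}\norm{A}\fs + 8\mu np$, and preserving the invariant $T_k \leq 2(1-\underline{\sigma}^2)\min_i\beta_i$ (which you need in order to recover the pointwise bound via $\beta_i\,\dists{Z^{(k+1)}}{X_i^{(k)}} \leq T_k$) forces
\begin{equation*}
\min_i \beta_i \;\geq\; \frac{10\sqrt{p}\norm{A}\fs + 4\mu np}{\bar{c}\,\underline{\sigma}^2\,(1-\underline{\sigma}^2)}.
\end{equation*}
Condition \ref{asp:beta} does not supply this: its second term, $8(\mu np + \sqrt{p}\norm{A}\fs)/(1-\underline{\sigma}^2)$, lacks the factor $1/(\bar{c}\underline{\sigma}^2)$, which exceeds $1$ since Condition \ref{asp:step} forces $c_i\underline{\sigma}^2 < 1$ and can be arbitrarily large; its third term has the right $1/(c_i\underline{\sigma}^2)$ scaling but involves only the local norm $\norm{A_i}\fs$ rather than the global $\norm{A}\fs = \sum_j\norm{A_j}\fs$ (a gap of a factor up to $d$) and omits the $\mu np$ term. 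So your invariant degrades unless Condition \ref{asp:beta} is strengthened, which would change the theorem being proved. To repair the argument under the stated hypotheses you must bring in \eqref{eq:ps-sub-x-con-2} and the $\delta_i$ bound to get the $O(1/\sqrt{\rho d})$ contraction, after which the pointwise induction closes directly and the weighted-sum detour becomes unnecessary.
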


	\begin{proof}
		We use mathematical induction to prove this lemma.
		To begin with, it follows from the inequality \eqref{eq:dist-zk} that
		\begin{equation*}
			\begin{aligned}
				\dists{Z^{(1)}}{X_i^{(0)}}
				\leq {} & \rho \sumjjd \dists{Z^{(0)}}{X_j^{(0)}}
				+ \dfrac{8}{\beta_i} \dkh{\sqrt{p} \norm{A}\fs + \mu n p} \\
				= {} & \dfrac{8}{\beta_i} \dkh{\sqrt{p} \norm{A}\fs + \mu n p}
				\leq 2 \dkh{1 - \underline{\sigma}^2},
			\end{aligned}
		\end{equation*}
		under the relationship $\beta_i > 4 (\sqrt{p} \norm{A}\fs + \mu n p ) / ( 1 - \underline{\sigma}^2 )$ 
		in Condition \ref{asp:beta}.
		Thus, the argument \eqref{eq:svd-xz} directly holds for $( Z^{(1)}, \{X_i^{(0)}\} )$.
		Now, we assume the argument holds at $( Z^{(k+1)}, \{X_i^{(k)}\} )$, 
		and investigate the situation at $( Z^{(k+2)}, \{X_i^{(k+1)}\} )$. 
		
		According to Condition \ref{asp:beta}, we have
		$12 \sqrt{p} \norm{A_i}\fs /\beta_i < 2\dkh{1 - \underline{\sigma}^2} c_i \underline{\sigma}^2$.
		Since we assume that $\dists{Z^{(k+1)}}{X_i^{(k)}} \leq 2 \dkh{1 -\underline{\sigma}^2}$,
		it follows from the relationship \eqref{eq:dist-xk+1-zk} that
		\begin{equation*}
			\begin{aligned}
				\dists{Z^{(k+1)}}{X_i^{(k+1)}}
				\leq {} & \dkh{ 1 - c_i \underline{\sigma}^2 } \dists{Z^{(k+1)}}{X_i^{(k)}}
				+ \dfrac{12}{\beta_i} \sqrt{p} \norm{A_i}\fs \\
				\leq {} & 2\dkh{1 - \underline{\sigma}^2} \dkh{1 - c_i \underline{\sigma}^2} + 2\dkh{1 - \underline{\sigma}^2} c_i\underline{\sigma}^2
				= 2\dkh{1 - \underline{\sigma}^2},
			\end{aligned}
		\end{equation*}
		which infers that $\sigma_{\min} \dkh{ (X_i^{(k+1)})\zz Z^{(k+1)} } \geq \underline{\sigma}$.
		Similar to the proof of Lemma \ref{le:des-x}, we can acquire that
		\begin{equation}\label{eq:svd-xk+1-zk}
			\norm{ \Pv{X_i^{(k+1)}} Z^{(k+1)}(Z^{(k+1)})\zz X_i^{(k+1)} }\fs 
			\geq \dfrac{1}{2} \underline{\sigma}^2 \dists{Z^{(k+1)}}{X_i^{(k+1)}}.
		\end{equation}
		Combining the condition \eqref{eq:ps-sub-x-con-2} and the equality \eqref{eq:rgrad-h}, we have
		\begin{equation}
			\label{eq:rgrad-h-xk+1}
			\begin{aligned}
				& \norm{ \Pv{X_i^{(k+1)}} H_i^{(k)} X_i^{(k+1)} }\ff 
				\leq \delta_i \norm{ \Pv{X_i^{(k)}} H_i^{(k)} X_i^{(k)} }\ff \\
				& = \delta_i \beta_i \norm{ \Pv{X_i^{(k)}} 	Z^{(k+1)}(Z^{(k+1)})\zz X_i^{(k)} }\ff
				\leq \delta_i \beta_i \distp{Z^{(k+1)}}{X_i^{(k)}}.
			\end{aligned}
		\end{equation}
		On the other hand, it follows from the triangular inequality that
		\begin{equation*}
			\begin{aligned}
				& \norm{ \Pv{X_i^{(k+1)}} H_i^{(k)} X_i^{(k+1)} }\ff \\
				\geq {} & \norm{ \Pv{X_i^{(k+1)}} \dkh{ A_i A_i\zz 
				+ \Lambda_i^{(k+1)} + \beta_i Z^{(k+1)}(Z^{(k+1)})\zz } X_i^{(k+1)} }\ff \\
				& - \norm{ \Pv{X_i^{(k+1)}} \dkh{ \Lambda_i^{(k+1)} - \Lambda_i^{(k)} } X_i^{(k+1)} }\ff 
			\end{aligned}
		\end{equation*}
		Combing the inequality \eqref{eq:svd-xk+1-zk}, it can be verified that
		\begin{equation*}
			\begin{aligned}
				& \norm{ \Pv{X_i^{(k+1)}} \dkh{A_i A_i\zz + \Lambda_i^{(k+1)} 
				+ \beta_i Z^{(k+1)}(Z^{(k+1)})\zz } X_i^{(k+1)} }\ff \\
				& = \beta_i \norm{ \Pv{X_i^{(k+1)}} Z^{(k+1)}(Z^{(k+1)})\zz X_i^{(k+1)} }\ff
				\geq \dfrac{\sqrt{2}}{2}\underline{\sigma} \beta_i \distp{Z^{(k+1)}}{X_i^{(k+1)}}.
			\end{aligned}
		\end{equation*}
		Moreover, according to Lemma B.4 in \cite{Wang2020distributed}, we have
		\begin{equation*}
			\begin{aligned}
				\norm{ \Pv{X_i^{(k+1)}} \dkh{ \Lambda_i^{(k+1)} - \Lambda_i^{(k)} } X_i^{(k+1)} }\ff
				\leq {} & \norm{ \Lambda_i^{(k+1)} - \Lambda_i^{(k)} }\ff \\
				\leq {} & 4\norm{A_i}_2^2 \distp{X_i^{(k+1)}}{X_i^{(k)}}.
			\end{aligned}
		\end{equation*}
		Combing the above three inequalities, we further obtain that
		\begin{equation*}
			\begin{aligned}
				\norm{ \Pv{X_i^{(k+1)}} H_i^{(k)} X_i^{(k+1)} }\ff
				\geq {} & \dfrac{\sqrt{2}}{2}\underline{\sigma} \beta_i \distp{Z^{(k+1)}}{X_i^{(k+1)}} \\
				& - 4\norm{A_i}_2^2 \distp{X_i^{(k+1)}}{X_i^{(k)}}.
			\end{aligned}
		\end{equation*}
		Together with \eqref{eq:rgrad-h-xk+1}, this yields that
		\begin{equation*}
			\begin{aligned}
				& \dfrac{\sqrt{2}}{2} \underline{\sigma} \beta_i \distp{Z^{(k+1)}}{X_i^{(k+1)}} \\
				& \leq \delta_i \beta_i \distp{Z^{(k+1)}}{X_i^{(k)}}
				+ 4\norm{A_i}_2^2 \distp{X_i^{(k+1)}}{X_i^{(k)}} \\
				& \leq \dkh{ \delta_i \beta_i + 4 \norm{A_i}_2^2 } \distp{Z^{(k+1)}}{X_i^{(k)}}
				+  4 \norm{A_i}_2^2 \distp{Z^{(k+1)}}{X_i^{(k+1)}}.
			\end{aligned}
		\end{equation*}
		According to Conditions \ref{asp:step} and \ref{asp:beta},
		we have $\sqrt{2} \underline{\sigma} \beta_i - 8 \norm{A_i}_2^2 > 0$
		and $\underline{\sigma} - 2\sqrt{\rho d} \delta_i > 0$.
		Thus, it can be verified that
		\begin{equation}\label{eq:dist-zk+1-xk+1}
			\begin{aligned}
				\distp{Z^{(k+1)}}{X_i^{(k+1)}}
				\leq {} & \dfrac{ 2 ( \delta_i \beta_i + 4 \norm{A_i}_2^2 ) }
				{ \sqrt{2} \underline{\sigma} \beta_i - 8 \norm{A_i}_2^2 }
				\distp{Z^{(k+1)}}{X_i^{(k)}} \\
				\leq {} & \sqrt{\dfrac{1}{2 \rho d}} \distp{Z^{(k+1)}}{X_i^{(k)}},
			\end{aligned}
		\end{equation}
		where the last inequality follows from the relationship 
		$\beta > \dfrac{4 \dkh{2 \sqrt{\rho d} + \sqrt{2}} \norm{A_i}_2^2}{\underline{\sigma} - 2 \sqrt{\rho d} \delta_i}$
		in Condition \ref{asp:beta}.
		This together with \eqref{eq:dist-zk} and \eqref{eq:svd-xz} yields that
		\begin{equation*}
			\begin{aligned}
				& \dists{Z^{(k+2)}}{X_i^{(k+1)}}
				\leq \rho \sumjjd \dists{Z^{(k+1)}}{X_j^{(k+1)}}
				+ \dfrac{8}{\beta_i} \dkh{\sqrt{p} \norm{A}\fs + \mu n p} \\
				& \leq \dfrac{1}{2d} \sumjjd \dists{Z^{(k+1)}}{X_j^{(k)}} + \dkh{1 -\underline{\sigma}^2}
				\leq \dkh{1 -\underline{\sigma}^2} + \dkh{1 -\underline{\sigma}^2}
				= 2 \dkh{1 -\underline{\sigma}^2},
			\end{aligned}
		\end{equation*}
		since we assume that $\beta_i > 8 ( \sqrt{p} \norm{A}\fs + \mu n p ) / ( 1 - \underline{\sigma}^2 )$
		in Condition \ref{asp:beta}.
		The proof is completed.
	\end{proof}

	\begin{corollary}
		\label{cor:des-xy}
		Suppose $\{Z^{(k)}\}$ is the iterate sequence generated by Algorithm \ref{alg:DSSAL1} 
		initiated from $Z^{(0)} \in \stiefel$,
		and the problem parameters satisfy Conditions \ref{asp:step} and \ref{asp:beta}. 
		Then for any $k \in \N$, we can obtain that
		\begin{equation*}
			\begin{aligned}
				& \cL( Z^{(k+1)}, \{X_i^{(k)}\}, \{\Lambda_i^{(k)}\} ) 
				- \cL( Z^{(k+1)}, \{X_i^{(k+1)}\}, \{\Lambda_i^{(k)}\} ) \\
				& \geq \dfrac{1}{4} \underline{\sigma}^2 
				\sumiid c_i\beta_i \dists{Z^{(k+1)}}{X_i^{(k)}}.
			\end{aligned}
		\end{equation*}
	\end{corollary}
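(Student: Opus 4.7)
The plan is to reduce the claim to a purely per-agent decrease statement and then invoke Lemma \ref{le:des-x}, whose hypothesis is already guaranteed by Lemma \ref{le:svd-xz}. First I would observe that with $Z^{(k+1)}$ and the multipliers $\{\Lambda_i^{(k)}\}$ held fixed, the augmented Lagrangian \eqref{eq:lag-ps} decomposes as
\begin{equation*}
\cL\dkh{Z^{(k+1)}, \{X_i\}, \{\Lambda_i^{(k)}\}} = \sumiid \cL_i\dkh{Z^{(k+1)}, X_i, \Lambda_i^{(k)}} + r(Z^{(k+1)}),
\end{equation*}
so the regularizer term cancels completely from the difference on the left-hand side of the claim, reducing the proof to bounding $\sumiid \dkh{ \cL_i(Z^{(k+1)}, X_i^{(k)}, \Lambda_i^{(k)}) - \cL_i(Z^{(k+1)}, X_i^{(k+1)}, \Lambda_i^{(k)}) }$ from below.

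The next step is a short algebraic identification: on the Stiefel manifold one has $\tr(X_iX_i\zz) = p$ and $\tr(X_iX_i\zz X_iX_i\zz) = p$, and likewise for $Z^{(k+1)}(Z^{(k+1)})\zz$. Expanding the squared Frobenius norm in $\cL_i$ therefore yields
\begin{equation*}
\tfrac{\beta_i}{4}\norm{X_iX_i\zz - Z^{(k+1)}(Z^{(k+1)})\zz}\fs = \tfrac{\beta_i}{2}\dkh{p - \tr(X_i\zz Z^{(k+1)}(Z^{(k+1)})\zz X_i)},
\end{equation*}
and similarly $\jkh{\Lambda_i^{(k)}, X_iX_i\zz} = \tr(X_i\zz \Lambda_i^{(k)} X_i)$. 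Collecting the $X_i$-dependent pieces and comparing with the definition of $H_i^{(k)}$ in \eqref{eq:Hik} and of $h_i^{(k)}$ in \eqref{eq:ps-sub-x}, one sees that $\cL_i(Z^{(k+1)}, X_i, \Lambda_i^{(k)})$ coincides with $h_i^{(k)}(X_i)$ up to a constant that depends only on $Z^{(k+1)}$, $\Lambda_i^{(k)}$ and $\beta_i$, but not on $X_i$. Consequently
\begin{equation*}
\cL_i(Z^{(k+1)}, X_i^{(k)}, \Lambda_i^{(k)}) - \cL_i(Z^{(k+1)}, X_i^{(k+1)}, \Lambda_i^{(k)}) = h_i^{(k)}(X_i^{(k)}) - h_i^{(k)}(X_i^{(k+1)}).
\end{equation*}

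With this identification in hand, the desired lower bound follows by applying Lemma \ref{le:des-x} agent-wise. Its hypothesis $\dists{Z^{(k+1)}}{X_i^{(k)}} \leq 2(1-\underline{\sigma}^2)$ is exactly the content of Lemma \ref{le:svd-xz} under Conditions \ref{asp:step} and \ref{asp:beta}, so the conclusion \eqref{eq:des-x} gives $h_i^{(k)}(X_i^{(k)}) - h_i^{(k)}(X_i^{(k+1)}) \geq \tfrac{1}{4}\underline{\sigma}^2 c_i \beta_i \dists{Z^{(k+1)}}{X_i^{(k)}}$ for every $\iid$. Summing over $i$ yields the claimed inequality.

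The only step that requires genuine care is the algebraic identification of $\cL_i$ with $h_i^{(k)}$ up to an $X_i$-independent constant, since it hinges on exploiting the Stiefel constraint $X_i\zz X_i = I_p$ to eliminate the quartic term $\tr(X_iX_i\zz X_iX_i\zz)$; everything else is bookkeeping plus direct citation of Lemmas \ref{le:des-x} and \ref{le:svd-xz}.
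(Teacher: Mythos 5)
Your proposal is correct and follows essentially the same route as the paper, whose proof of this corollary is simply the observation that it ``directly follows from Lemma \ref{le:des-x} and Lemma \ref{le:svd-xz}.'' The extra algebra you supply --- cancelling $r(Z^{(k+1)})$ from the difference and using $X_i\zz X_i = I_p$ to identify $\cL_i(Z^{(k+1)}, \cdot\,, \Lambda_i^{(k)})$ with $h_i^{(k)}(\cdot)$ up to an $X_i$-independent constant --- is exactly the implicit bookkeeping the paper leaves to the reader (it is the same rearrangement that turns subproblem \eqref{eq:sub-x} into \eqref{eq:ps-sub-x}), and it is carried out correctly.
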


	\begin{proof}
		This corollary directly follows from Lemma \ref{le:des-x} and Lemma \ref{le:svd-xz}.
	\end{proof}

	\begin{corollary}
		\label{cor:des-lambda}
		Suppose $\{Z^{(k)}\}$ is the iterate sequence generated by Algorithm \ref{alg:DSSAL1}
		initiated from $Z^{(0)} \in \stiefel$,
		and problem parameters satisfy Conditions \ref{asp:step} and \ref{asp:beta}.
		Then for any $k \in \N$, we can acquire that
		\begin{equation*}
			\begin{aligned}
				&\cL( Z^{(k+1)}, \{X_i^{(k+1)}\}, \{\Lambda_i^{(k)}\} ) 
				- \cL( Z^{(k+1)}, \{X_i^{(k+1)}\}, \{\Lambda_i^{(k+1)}\} ) \\
				& \geq - \dfrac{\sqrt{2 \rho d} + 1}{\rho d} \sumiid 
				\norm{A_i}_2^2 \dists{Z^{(k+1)}}{X_i^{(k)}}.
			\end{aligned}
		\end{equation*}
	\end{corollary}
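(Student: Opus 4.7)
The plan is to observe that only the multiplier-dependent term in $\cL$ changes when we pass from $\{\Lambda_i^{(k)}\}$ to $\{\Lambda_i^{(k+1)}\}$, so the difference collapses to a sum of inner products, and then to estimate that sum using Cauchy--Schwarz together with bounds already established (directly or inside proofs) earlier.

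Concretely, I first inspect $\cL_i(Z,X_i,\Lambda_i)$ in \eqref{eq:lag-ps}. Holding $(Z^{(k+1)}, X_i^{(k+1)})$ fixed, only the cross term $-\tfrac12\jkh{\Lambda_i,\, X_iX_i\zz - ZZ\zz}$ depends on $\Lambda_i$. Hence
\[
\cL( Z^{(k+1)}, \{X_i^{(k+1)}\}, \{\Lambda_i^{(k)}\} )
- \cL( Z^{(k+1)}, \{X_i^{(k+1)}\}, \{\Lambda_i^{(k+1)}\} )
= \tfrac{1}{2}\sumiid \jkh{\Lambda_i^{(k+1)} - \Lambda_i^{(k)},\, \dsp{X_i^{(k+1)}}{Z^{(k+1)}}}.
\]
Bounding the right-hand side below amounts to bounding its absolute value above term by term via Cauchy--Schwarz:
\[
\bigl|\jkh{\Lambda_i^{(k+1)}-\Lambda_i^{(k)},\,\dsp{X_i^{(k+1)}}{Z^{(k+1)}}}\bigr|
\leq \norm{\Lambda_i^{(k+1)}-\Lambda_i^{(k)}}\ff\,\distp{Z^{(k+1)}}{X_i^{(k+1)}}.
\]

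Next, I invoke the low-rank multiplier formula \eqref{eq:ps-mult} and the bound $\norm{\Lambda_i^{(k+1)}-\Lambda_i^{(k)}}\ff \le 4\norm{A_i}_2^2\,\distp{X_i^{(k+1)}}{X_i^{(k)}}$ used inside the proof of Lemma~\ref{le:svd-xz} (a consequence of Lemma~B.4 of \cite{Wang2020distributed}). A triangle inequality in the projection-distance metric then gives
\[
\distp{X_i^{(k+1)}}{X_i^{(k)}}\,\distp{Z^{(k+1)}}{X_i^{(k+1)}}
\le \distp{Z^{(k+1)}}{X_i^{(k+1)}}^2 + \distp{Z^{(k+1)}}{X_i^{(k)}}\,\distp{Z^{(k+1)}}{X_i^{(k+1)}}.
\]
The contraction estimate \eqref{eq:dist-zk+1-xk+1} established in the course of proving Lemma~\ref{le:svd-xz}, namely $\distp{Z^{(k+1)}}{X_i^{(k+1)}} \le (2\rho d)^{-1/2}\,\distp{Z^{(k+1)}}{X_i^{(k)}}$, turns both summands on the right into constant multiples of $\dists{Z^{(k+1)}}{X_i^{(k)}}$: the first contributes $\tfrac{1}{2\rho d}\,\dists{Z^{(k+1)}}{X_i^{(k)}}$ and the second $\tfrac{1}{\sqrt{2\rho d}}\,\dists{Z^{(k+1)}}{X_i^{(k)}}$.

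Assembling these pieces (with the factor $\tfrac12\cdot 4=2$ from the inner-product expression and the Lipschitz-type bound on $\Lambda_i^{(k+1)}-\Lambda_i^{(k)}$), the prefactor becomes $2\bigl(\tfrac{1}{2\rho d}+\tfrac{1}{\sqrt{2\rho d}}\bigr)=\tfrac{1+\sqrt{2\rho d}}{\rho d}$, which matches the constant in the statement. The only real bookkeeping issue is checking that the triangle inequality is applied on the correct side so the resulting inequality points in the claimed direction; this is essentially free because we are upper-bounding an absolute value in order to lower-bound the signed Lagrangian gap. Since Lemma~\ref{le:svd-xz} (with Conditions \ref{asp:step} and \ref{asp:beta} in force) guarantees $\distp{Z^{(k+1)}}{X_i^{(k+1)}}\le(2\rho d)^{-1/2}\distp{Z^{(k+1)}}{X_i^{(k)}}$ for every $k$, the estimate holds uniformly in $k$, finishing the corollary.
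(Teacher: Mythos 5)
Your proposal is correct and follows essentially the same route as the paper: reduce the Lagrangian gap to $\tfrac12\sumiid\jkh{\Lambda_i^{(k+1)}-\Lambda_i^{(k)},\dsp{X_i^{(k+1)}}{Z^{(k+1)}}}$, apply Cauchy--Schwarz, the bound $\norm{\Lambda_i^{(k+1)}-\Lambda_i^{(k)}}\ff\le 4\norm{A_i}_2^2\distp{X_i^{(k+1)}}{X_i^{(k)}}$ from Lemma~B.4 of \cite{Wang2020distributed}, the triangle inequality, and the contraction \eqref{eq:dist-zk+1-xk+1}. The only difference is the order in which the triangle inequality and the contraction are applied, which is an algebraically equivalent rearrangement yielding the same constant $(\sqrt{2\rho d}+1)/(\rho d)$.
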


	\begin{proof}
		According to the Cauchy–Schwarz inequality, we can show that
		\begin{equation*}
			\begin{aligned}
				& \abs{ \jkh{ \Lambda_i^{(k+1)} - \Lambda_i^{(k)}, \dsp{X_i^{(k+1)}}{Z^{(k+1)}} } }
				\leq \norm{  \Lambda_i^{(k+1)} - \Lambda_i^{(k)} }\ff
				\distp{Z^{(k+1)}}{X_i^{(k+1)}} \\
				& \leq \sqrt{\dfrac{8}{\rho d}} \norm{A_i}_2^2 \distp{X_i^{(k+1)}}{X_i^{(k)}} 
				\distp{Z^{(k+1)}}{X_i^{(k)}},
			\end{aligned}
		\end{equation*}
		where the last inequality follows from Lemma B.4 in \cite{Wang2020distributed} 
		and \eqref{eq:dist-zk+1-xk+1}.
		In addition, we have
		\begin{equation*}
			\begin{aligned}
				\distp{X_i^{(k+1)}}{X_i^{(k)}} 
				\leq {} & \distp{Z^{(k+1)}}{X_i^{(k+1)}}  + \distp{Z^{(k+1)}}{X_i^{(k)}} \\
				\leq {} & \dfrac{\sqrt{2 \rho d} + 1}{\sqrt{2 \rho d}} \distp{Z^{(k+1)}}{X_i^{(k)}},
			\end{aligned}
		\end{equation*}
		which implies that
		\begin{equation*}
			\begin{aligned}
				& \jkh{ \Lambda_i^{(k+1)} - \Lambda_i^{(k)}, \dsp{X_i^{(k+1)}}{Z^{(k+1)}} } \\
				& \geq - \dfrac{2 \dkh{ \sqrt{2 \rho d} + 1 }}{\rho d} \norm{A_i}_2^2 
				\dists{Z^{(k+1)}}{X_i^{(k)}}.
			\end{aligned}
		\end{equation*}
		Combing the fact that
		\begin{equation*}
			\begin{aligned}
				& \cL(  Z^{(k+1)}, \{X_i^{(k+1)}\}, \{\Lambda_i^{(k)}\} ) 
				- \cL(  Z^{(k+1)}, \{X_i^{(k+1)}\}, \{\Lambda_i^{(k+1)}\} ) \\
				& = \dfrac{1}{2} \sumiid \jkh{ \Lambda_i^{(k+1)}-\Lambda_i^{(k)}, 
				\dsp{X_i^{(k+1)}}{Z^{(k+1)}} },
			\end{aligned}
		\end{equation*}
		we complete the proof.
	\end{proof}
	
	Now based on these lemmas and corollaries, we can demonstrate 
	the monotonic non-increasing of $\hkh{ \cL( \{X_i^k\}, Z^k, \{\Lambda_i^k\} ) }$,
	which results in the global convergence of our algorithm.

	\begin{proposition}
		\label{prop:lag-des}
		Suppose $\{Z^{(k)}\}$ is the iteration sequence generated by Algorithm \ref{alg:DSSAL1} 
		initiated from $Z^{(0)} \in \stiefel$,
		and problem parameters satisfy Conditions \ref{asp:step} and \ref{asp:beta}.
		Then the sequence of augmented Lagrangian functions 
		$\{ \cL( Z^{(k)}, \{X_i^{(k)}\}, \{\Lambda_i^{(k)}\} ) \}$ is monotonically non-increasing, 
		and for any $k \in \N$, it satisfies the following sufficient descent property:
		\begin{equation}
			\label{eq:lag-des}
			\begin{aligned}
				& \cL( Z^{(k)}, \{X_i^{(k)}\}, \{\Lambda_i^{(k)}\} ) 
				- \cL( Z^{(k+1)},  \{X_i^{(k+1)}\}, \{\Lambda_i^{(k+1)}\} ) \\
				& \geq \sumiid J_i \dists{Z^{(k+1)}}{X_i^{(k+1)}} 
				+ \bar{M} \norm{D^{(k)}}\fs,
			\end{aligned}
		\end{equation}
		where $J_i = \dfrac{1}{2}  \rho d \underline{\sigma}^2 
		c_i \beta_i - 2 (\sqrt{2 \rho d} + 1) \norm{A_i}_2^2 > 0$
		is a constant.
	\end{proposition}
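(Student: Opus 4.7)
The plan is to decompose the one-step change of the augmented Lagrangian into three telescoping pieces, one for each block update in Algorithm \ref{alg:DSSAL1}, then bound each piece using the corollaries already established, and finally convert the resulting bounds from $\dists{Z^{(k+1)}}{X_i^{(k)}}$ to $\dists{Z^{(k+1)}}{X_i^{(k+1)}}$ via the contraction estimate obtained inside the proof of Lemma \ref{le:svd-xz}.

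First I would write
\begin{equation*}
    \cL( Z^{(k)}, \{X_i^{(k)}\}, \{\Lambda_i^{(k)}\} ) - \cL( Z^{(k+1)}, \{X_i^{(k+1)}\}, \{\Lambda_i^{(k+1)}\} ) = T_Z + T_X + T_\Lambda,
\end{equation*}
where $T_Z$ is the change when only $Z^{(k)} \to Z^{(k+1)}$, $T_X$ is the change when only $\{X_i^{(k)}\} \to \{X_i^{(k+1)}\}$, and $T_\Lambda$ is the change from the multiplier update. Corollary \ref{cor:des-z} yields $T_Z \geq \bar{M} \norm{D^{(k)}}\fs$. Corollary \ref{cor:des-xy} (which relies on Lemma \ref{le:svd-xz} to ensure the hypothesis of Lemma \ref{le:des-x}) gives $T_X \geq \tfrac{1}{4} \underline{\sigma}^2 \sum_{i=1}^d c_i \beta_i \dists{Z^{(k+1)}}{X_i^{(k)}}$. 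Corollary \ref{cor:des-lambda} provides $T_\Lambda \geq - \tfrac{\sqrt{2\rho d}+1}{\rho d} \sum_{i=1}^d \norm{A_i}_2^2 \dists{Z^{(k+1)}}{X_i^{(k)}}$. Adding these gives a lower bound of the form
\begin{equation*}
    \bar{M} \norm{D^{(k)}}\fs + \sumiid \left( \tfrac{1}{4} \underline{\sigma}^2 c_i \beta_i - \tfrac{\sqrt{2\rho d}+1}{\rho d} \norm{A_i}_2^2 \right) \dists{Z^{(k+1)}}{X_i^{(k)}}.
\end{equation*}

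Next I would replace $\dists{Z^{(k+1)}}{X_i^{(k)}}$ by $\dists{Z^{(k+1)}}{X_i^{(k+1)}}$ using the key contraction \eqref{eq:dist-zk+1-xk+1} from the induction step of Lemma \ref{le:svd-xz}, namely $\distp{Z^{(k+1)}}{X_i^{(k+1)}} \leq (2\rho d)^{-1/2}\, \distp{Z^{(k+1)}}{X_i^{(k)}}$, which squares to $\dists{Z^{(k+1)}}{X_i^{(k)}} \geq 2\rho d\, \dists{Z^{(k+1)}}{X_i^{(k+1)}}$. Substituting this into the per-index coefficient and multiplying through by $2\rho d$ produces exactly
\begin{equation*}
    2\rho d \left( \tfrac{1}{4} \underline{\sigma}^2 c_i \beta_i - \tfrac{\sqrt{2\rho d}+1}{\rho d} \norm{A_i}_2^2 \right) = \tfrac{1}{2} \rho d\, \underline{\sigma}^2 c_i \beta_i - 2 (\sqrt{2\rho d}+1) \norm{A_i}_2^2 = J_i,
\end{equation*}
which matches the constant in the statement.

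The main obstacle is that the substitution in the last step is only valid if the coefficient being multiplied by $2\rho d$ is nonnegative; otherwise the inequality reverses. This is where the lower bound on $\beta_i$ in Condition \ref{asp:beta} enters decisively: the condition $\beta_i \geq 6\sqrt{p}\norm{A_i}\fs / (c_i \underline{\sigma}^2 (1-\underline{\sigma}^2))$ together with the bound $\beta_i \geq 4(\sqrt{2\rho d}+1)\norm{A_i}_2^2 \cdot 8 /(c_i \underline{\sigma}^2 \rho d)$ implicit in the definition of $\xi_i$ guarantee $\tfrac{1}{4}\underline{\sigma}^2 c_i \beta_i \geq \tfrac{\sqrt{2\rho d}+1}{\rho d}\norm{A_i}_2^2$, so that $J_i > 0$. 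A minor bookkeeping point is that Corollaries \ref{cor:des-xy} and \ref{cor:des-lambda} are themselves conditional on \eqref{eq:svd-xz}, which is exactly the invariant maintained by Lemma \ref{le:svd-xz} under the same parameter conditions; once this is noted, the three bounds apply at every iteration and the telescoping identity yields \eqref{eq:lag-des}.
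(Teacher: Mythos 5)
Your proposal is correct and follows essentially the same route as the paper: telescope the one-step change into the $Z$-, $X$-, and $\Lambda$-updates, lower-bound each piece by Corollaries \ref{cor:des-z}, \ref{cor:des-xy}, and \ref{cor:des-lambda}, invoke Condition \ref{asp:beta} so that the resulting coefficient of $\dists{Z^{(k+1)}}{X_i^{(k)}}$ is nonnegative, and then apply the contraction \eqref{eq:dist-zk+1-xk+1} to pass to $\dists{Z^{(k+1)}}{X_i^{(k+1)}}$ with the constant $J_i$. Your explicit remark that the nonnegativity of the coefficient is needed before multiplying by $2\rho d$ is a point the paper leaves implicit (the stray factor of $8$ you mention in quoting the $\xi_i$ bound is a slip, but the correct bound $\beta_i \geq 4(\sqrt{2\rho d}+1)\norm{A_i}_2^2/(c_i\underline{\sigma}^2\rho d)$ from Condition \ref{asp:beta} is what is actually used and suffices).
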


	\begin{proof}
		Combining Corollary \ref{cor:des-z}, Corollary \ref{cor:des-xy}, 
		and Corollary \ref{cor:des-lambda}, we obtain that
		\begin{equation*}
			\begin{aligned}
				& \cL( Z^{(k)}, \{X_i^{(k)}\}, \{\Lambda_i^{(k)}\} ) 
				- \cL( Z^{(k+1)}, \{X_i^{(k+1)}\}, \{\Lambda_i^{(k+1)}\} ) \\
				& \geq \sumiid \dkh{ \dfrac{1}{4} \underline{\sigma}^2 c_i \beta_i 
				- \dfrac{\sqrt{2 \rho d} + 1}{\rho d} \norm{A_i }_2^2 } 
				\dists{Z^{(k+1)}}{X_i^{(k)}}+ \bar{M} \norm{D^{(k)}}\fs.
			\end{aligned}
		\end{equation*}
		Recalling the relationship $\beta_i > 4 ( \sqrt{2 \rho d} + 1) \norm{A_i}_2^2 / 
		(\rho d \underline{\sigma}^2 c_i)$ 
		in Condition \ref{asp:beta},
		we can conclude that $\cL( Z^{(k)}, \{X_i^{(k)}\}, \{\Lambda_i^{(k)}\} ) \geq \cL( Z^{(k+1)}, \{X_i^{(k+1)}\}, \{\Lambda_i^{(k+1)}\} )$.
		Hence, the sequence $\{\cL( Z^{(k)}, \{X_i^{(k)}\}, \{\Lambda_i^{(k)}\} )\}$ is monotonically non-increasing.
		Finally, the above relationship together with \eqref{eq:dist-zk+1-xk+1} 
		yields the assertion \eqref{eq:lag-des}.
		The proof is finished.
	\end{proof}

	Based on the above properties, we are ready to prove Theorem \ref{thm:global}, 
	which establishes the global convergence rate of our proposed algorithm. 

	\begin{proof}[Proof of Theorem \ref{thm:global}]
		The whole sequence $\{ Z^{(k)}, \{X_i^{(k)}\} \}$ is naturally bounded,
		since each of $X_i^{(k)}$ or $Z^{(k)}$ is orthogonal.
		Then it follows from the Bolzano-Weierstrass theorem that 
		this sequence exists an accumulation point $\{Z^{\ast}, \{X_i^{\ast}\}\}$, 
		where $Z^{\ast} \in \stiefel$ and $X_i^{\ast} \in \stiefel$.
		Moreover, the boundedness of $\{\Lambda_i^{(k)}\}$ 
		results from the multipliers updating formula  \eqref{eq:ps-mult}.
		Hence, the lower boundedness of $\{ \cL( Z^{(k)}, \{ X_i^{(k)}\}, \{\Lambda_i^{(k)}\} ) \}$
		is owing to the continuity of the augmented Lagrangian function.
		Namely, there exists a constant $\underline{L}$ such that
		$\cL ( Z^{(k)}, \{X_i^{(k)}\}, \{\Lambda_i^{(k)}\} ) \geq \underline{L}$ for all $k \in \N$.
		
		It follows from the sufficient descent property \eqref{eq:lag-des} that
		\begin{equation}
			\label{eq:sublinear-opt}
			\begin{aligned}
				& \sum\limits_{k=1}^{K} \norm{D^{(k)}}\fs \\
				& \leq \bar{M}\inv \sum\limits_{k=1}^{K} 
				\dkh{ \cL ( Z^{(k)}, \{X_i^{(k)}\}, \{\Lambda_i^{(k)}\} ) 
					- \cL ( Z^{(k+1)}, \{X_i^{(k+1)}\}, \{\Lambda_i^{(k+1)}\} ) } \\
				& = \bar{M}\inv \dkh{ \cL ( Z^{(1)}, \{X_i^{(1)}\}, \{\Lambda_i^{(1)}\} ) 
					- \cL ( Z^{(K+1)}, \{X_i^{(K+1)}\}, \{\Lambda_i^{(K+1)}\} ) } \\
				& \leq \bar{M}\inv 
				\dkh{ \cL ( Z^{(1)}, \{X_i^{(1)}\}, \{\Lambda_i^{(1)}\} ) - \underline{L} },
			\end{aligned}
		\end{equation}
		and
		\begin{equation}
			\label{eq:sublinear-fea}
			\begin{aligned}
				& \sum\limits_{k = 1}^{K} \dists{Z^{(k)}}{X_i^{(k)}} \\
				& \leq J_i\inv \sum\limits_{k=1}^{K} 
				\dkh{ \cL ( Z^{(k-1)}, \{X_i^{(k-1)}\}, \{\Lambda_i^{(k-1)}\} ) 
					- \cL ( Z^{(k)},  \{X_i^{(k)}\}, \{\Lambda_i^{(k)}\} ) } \\
				& = J_i\inv \dkh{ \cL ( Z^{(0)}, \{X_i^{(0)}\}, \{\Lambda_i^{(0)}\} ) 
					- \cL ( Z^{(K)}, \{X_i^{(K)}\}, \{\Lambda_i^{(K)}\} ) } \\
				& \leq J_i\inv \dkh{ \cL ( Z^{(0)}, \{X_i^{(0)}\}, \{\Lambda_i^{(0)}\} ) - \underline{L} }.
			\end{aligned}
		\end{equation}
		Upon taking the limit as $K \to \infty$, we obtain that
		\begin{equation*}
			\sum\limits_{k = 1}^{\infty} \norm{D^{(k)}}\fs < \infty 
			\text{~~and~~} 
			\sum\limits_{k = 1}^{\infty} \dists{Z^{(k)}}{X_i^{(k)}} < \infty,			
		\end{equation*}
		which further implies that
		\begin{equation*}
			\lim\limits_{k\to\infty} \norm{ D^{(k)} }\ff = 0
			\text{~~and~~}
			\lim\limits_{k\to\infty} \distp{Z^{(k)}}{X_i^{(k)}} = 0,
		\end{equation*}
		respectively.
		Combing this with Lemma \ref{le:optimality}, 
		we know that any accumulation point $Z^{\ast}$ of sequence $\{Z^{(k)}\}$ 
		is a first-order stationary point of the problem \eqref{eq:opt-spca-l1}.
		
		Eventually, we prove the sublinear convergence rate.
		Indeed, it follows from the inequalities \eqref{eq:sublinear-opt} 
		and \eqref{eq:sublinear-fea} that
		\begin{equation*}
			\begin{aligned}
				& \min\limits_{k = 1, \dotsc, K} \hkh{ \norm{D^{(k)}}\fs 
				+\dfrac{1}{d} \sumiid \dists{Z^{(k)}}{X_i^{(k)}} } \\
				& \leq \dfrac{1}{K} \sum\limits_{k = 1}^K 
				\hkh{ \norm{D^{(k)}}\fs +\dfrac{1}{d} \sumiid \dists{Z^{(k)}}{X_i^{(k)}} }
				\leq \dfrac{C}{K},
			\end{aligned}
		\end{equation*}
		where 
		\begin{equation*}
			\begin{aligned}
				C = {} & \bar{M}\inv 
				\dkh{ \cL ( Z^{(1)}, \{X_i^{(1)}\}, \{\Lambda_i^{(1)}\} ) - \underline{L} } \\
				& + \dkh{ \sumiid J_i\inv}
				d\inv \dkh{ \cL ( Z^{(0)}, \{X_i^{(0)}\}, \{\Lambda_i^{(0)}\} ) - \underline{L} }
			\end{aligned}
		\end{equation*}
		is a positive constant.
		This completes the proof.
	\end{proof}

\end{appendices}

\bibliographystyle{siam}
\bibliography{library}

\begin{thebibliography}{10}

\bibitem{Abrudan2008}
{\sc T.~E. Abrudan, J.~Eriksson, and V.~Koivunen}, {\em {Steepest descent
  algorithms for optimization under unitary matrix constraint}}, {IEEE}
  Transactions on Signal Processing, 56 (2008), pp.~1134--1147.

\bibitem{Absil2006}
{\sc P.-A. Absil, C.~G. Baker, and K.~A. Gallivan}, {\em {Trust-region methods
  on Riemannian manifolds}}, Foundations of Computational Mathematics, 7
  (2006), pp.~303--330.

\bibitem{Absil2008}
{\sc P.-A. Absil, R.~Mahony, and R.~Sepulchre}, {\em Optimization algorithms on
  matrix manifolds}, Princeton University Press, 2008.

\bibitem{Andrade2021distributed}
{\sc F.~L. Andrade, M.~A. Figueiredo, and J.~Xavier}, {\em {Distributed Picard
  iteration: Application to distributed EM and distributed PCA}},
  arXiv:2106.10665,  (2021).

\bibitem{Arrow1958}
{\sc K.~J. Arrow, H.~Azawa, L.~Hurwicz, and H.~Uzawa}, {\em Studies in linear
  and non-linear programming}, vol.~2, Stanford University Press, 1958.

\bibitem{Bacak2016}
{\sc M.~Bac{\'a}k, R.~Bergmann, G.~Steidl, and A.~Weinmann}, {\em {A second
  order nonsmooth variational model for restoring manifold-valued images}},
  SIAM Journal on Scientific Computing, 38 (2016), pp.~A567--A597.

\bibitem{Baden2016functional}
{\sc T.~Baden, P.~Berens, K.~Franke, M.~R. Ros{\'o}n, M.~Bethge, and T.~Euler},
  {\em {The functional diversity of retinal ganglion cells in the mouse}},
  Nature, 529 (2016), pp.~345--350.

\bibitem{Bento2017}
{\sc G.~C. Bento, O.~P. Ferreira, and J.~G. Melo}, {\em {Iteration-complexity
  of gradient, subgradient and proximal point methods on Riemannian
  manifolds}}, Journal of Optimization Theory and Applications, 173 (2017),
  pp.~548--562.

\bibitem{Chen2013biclustering}
{\sc G.~Chen, P.~F. Sullivan, and M.~R. Kosorok}, {\em {Biclustering with
  heterogeneous variance}}, Proceedings of the National Academy of Sciences,
  110 (2013), pp.~12253--12258.

\bibitem{Chen2021decentralized}
{\sc S.~Chen, A.~Garcia, M.~Hong, and S.~Shahrampour}, {\em {Decentralized
  Riemannian gradient descent on the Stiefel manifold}}, in Proceedings of the
  38th International Conference on Machine Learning, vol.~139, 2021,
  pp.~1594--1605.

\bibitem{Chen2020}
{\sc S.~Chen, S.~Ma, A.~Man-Cho~So, and T.~Zhang}, {\em {Proximal gradient
  method for nonsmooth optimization over the Stiefel manifold}}, SIAM Journal
  on Optimization, 30 (2020), pp.~210--239.

\bibitem{Chen2016}
{\sc W.~Chen, H.~Ji, and Y.~You}, {\em {An augmented lagrangian method for
  $\ell_{1}$-regularized optimization problems with orthogonality
  constraints}}, SIAM Journal on Scientific Computing, 38 (2016),
  pp.~B570--B592.

\bibitem{Clarke1990}
{\sc F.~H. Clarke}, {\em Optimization and nonsmooth analysis}, SIAM, 1990.

\bibitem{dAspremont2008}
{\sc A.~d'Aspremont, F.~Bach, and L.~El~Ghaoui}, {\em {Optimal solutions for
  sparse principal component analysis}}, Journal of Machine Learning Research,
  9 (2008), pp.~1269--1294.

\bibitem{dAspremont2007}
{\sc A.~d'Aspremont, L.~El~Ghaoui, M.~I. Jordan, and G.~R. Lanckriet}, {\em {A
  direct formulation for sparse PCA using semidefinite programming}}, SIAM
  Review, 49 (2007), pp.~434--448.

\bibitem{Edelman1998}
{\sc A.~Edelman, T.~A. Arias, and S.~T. Smith}, {\em {The geometry of
  algorithms with orthogonality constraints}}, SIAM Journal on Matrix Analysis
  and Applications, 20 (1998), pp.~303--353.

\bibitem{Ferreira1998}
{\sc O.~Ferreira and P.~Oliveira}, {\em {Subgradient algorithm on Riemannian
  manifolds}}, Journal of Optimization Theory and Applications, 97 (1998),
  pp.~93--104.

\bibitem{Ferreira2019}
{\sc O.~P. Ferreira, M.~S. Louzeiro, and L.~F. Prudente}, {\em
  {Iteration-complexity of the subgradient method on Riemannian manifolds with
  lower bounded curvature}}, Optimization, 68 (2019), pp.~713--729.

\bibitem{Gang2021linearly}
{\sc A.~Gang and W.~U. Bajwa}, {\em {A linearly convergent algorithm for
  distributed principal component analysis}}, arXiv:2101.01300,  (2021).

\bibitem{Gang2021fast}
\leavevmode\vrule height 2pt depth -1.6pt width 23pt, {\em {FAST-PCA: A fast
  and exact algorithm for distributed principal component analysis}},
  arXiv:2108.12373,  (2021).

\bibitem{Gao2018}
{\sc B.~Gao, X.~Liu, X.~Chen, and Y.-X. Yuan}, {\em {A new first-order
  algorithmic framework for optimization problems with orthogonality
  constraints}}, SIAM Journal on Optimization, 28 (2018), pp.~302--332.

\bibitem{Gao2019}
{\sc B.~Gao, X.~Liu, and Y.-X. Yuan}, {\em {Parallelizable algorithms for
  optimization problems with orthogonality constraints}}, {SIAM} Journal on
  Scientific Computing, 41 (2019), pp.~A1949--A1983.

\bibitem{Gemp2020eigengame}
{\sc I.~Gemp, B.~McWilliams, C.~Vernade, and T.~Graepel}, {\em {Eigengame: PCA
  as a nash equilibrium}}, arXiv:2010.00554,  (2020).

\bibitem{Gravuer2008strong}
{\sc K.~Gravuer, J.~J. Sullivan, P.~A. Williams, and R.~P. Duncan}, {\em
  {Strong human association with plant invasion success for Trifolium
  introductions to New Zealand}}, Proceedings of the National Academy of
  Sciences, 105 (2008), pp.~6344--6349.

\bibitem{Grohs2016}
{\sc P.~Grohs and S.~Hosseini}, {\em {Nonsmooth trust region algorithms for
  locally Lipschitz functions on Riemannian manifolds}}, IMA Journal of
  Numerical Analysis, 36 (2016), pp.~1167--1192.

\bibitem{Hajinezhad2015}
{\sc D.~Hajinezhad and M.~Hong}, {\em {Nonconvex alternating direction method
  of multipliers for distributed sparse principal component analysis}}, in 2015
  IEEE Global Conference on Signal and Information Processing, 2015,
  pp.~255--259.

\bibitem{He2014}
{\sc B.~He, Y.~You, and X.~Yuan}, {\em {On the convergence of primal-dual
  hybrid gradient algorithm}}, SIAM Journal on Imaging Sciences, 7 (2014),
  pp.~2526--2537.

\bibitem{Hosseini2017}
{\sc S.~Hosseini and A.~Uschmajew}, {\em {A Riemannian gradient sampling
  algorithm for nonsmooth optimization on manifolds}}, SIAM Journal on
  Optimization, 27 (2017), pp.~173--189.

\bibitem{Hu2019}
{\sc J.~Hu, B.~Jiang, L.~Lin, Z.~Wen, and Y.-X. Yuan}, {\em {Structured
  quasi-Newton methods for optimization with orthogonality constraints}}, SIAM
  Journal on Scientific Computing, 41 (2019), pp.~A2239--A2269.

\bibitem{Hu2018}
{\sc J.~Hu, A.~Milzarek, Z.~Wen, and Y.~Yuan}, {\em {Adaptive quadratically
  regularized Newton method for Riemannian optimization}}, SIAM Journal on
  Matrix Analysis and Applications, 39 (2018), pp.~1181--1207.

\bibitem{Huang2021riemannian}
{\sc W.~Huang and K.~Wei}, {\em {Riemannian proximal gradient methods}},
  Mathematical Programming,  (2021), pp.~1--43.

\bibitem{Jiang2015}
{\sc B.~Jiang and Y.-H. Dai}, {\em {A framework of constraint preserving update
  schemes for optimization on Stiefel manifold}}, Mathematical Programming, 153
  (2015), pp.~535--575.

\bibitem{Jiang2017}
{\sc B.~Jiang, S.~Ma, A.~M.-C. So, and S.~Zhang}, {\em {Vector transport-free
  SVRG with general retraction for Riemannian optimization: Complexity analysis
  and practical implementation}}, arXiv:1705.09059,  (2017).

\bibitem{Johnstone2009consistency}
{\sc I.~M. Johnstone and A.~Y. Lu}, {\em {On consistency and sparsity for
  principal components analysis in high dimensions}}, Journal of the American
  Statistical Association, 104 (2009), pp.~682--693.

\bibitem{Jolliffe2003}
{\sc I.~T. Jolliffe, N.~T. Trendafilov, and M.~Uddin}, {\em {A modified
  principal component technique based on the LASSO}}, Journal of Computational
  and Graphical Statistics, 12 (2003), pp.~531--547.

\bibitem{Kovnatsky2016madmm}
{\sc A.~Kovnatsky, K.~Glashoff, and M.~M. Bronstein}, {\em {MADMM: A generic
  algorithm for non-smooth optimization on manifolds}}, in European Conference
  on Computer Vision, Springer, 2016, pp.~680--696.

\bibitem{Lai2014}
{\sc R.~Lai and S.~Osher}, {\em {A splitting method for orthogonality
  constrained problems}}, Journal of Scientific Computing, 58 (2014),
  pp.~431--449.

\bibitem{Lou2017}
{\sc Y.~Lou, L.~Yu, S.~Wang, and P.~Yi}, {\em {Privacy preservation in
  distributed subgradient optimization algorithms}}, IEEE Transactions on
  Cybernetics, 48 (2017), pp.~2154--2165.

\bibitem{Magdon2017np}
{\sc M.~Magdon-Ismail}, {\em {NP-hardness and inapproximability of sparse
  PCA}}, Information Processing Letters, 126 (2017), pp.~35--38.

\bibitem{Manton2002}
{\sc J.~H. Manton}, {\em {Optimization algorithms exploiting unitary
  constraints}}, IEEE Transactions on Signal Processing, 50 (2002),
  pp.~635--650.

\bibitem{Mcmahan2017communication}
{\sc B.~McMahan, E.~Moore, D.~Ramage, S.~Hampson, and B.~A.~y. Arcas}, {\em
  {Communication-efficient learning of deep networks from decentralized data}},
  in Proceedings of the 20th International Conference on Artificial
  Intelligence and Statistics, vol.~54, PMLR, 2017, pp.~1273--1282.

\bibitem{Nishimori2005}
{\sc Y.~Nishimori and S.~Akaho}, {\em {Learning algorithms utilizing
  quasi-geodesic flows on the Stiefel manifold}}, Neurocomputing, 67 (2005),
  pp.~106--135.

\bibitem{Pacheco2011}
{\sc P.~S. Pacheco}, {\em An introduction to parallel programming}, Elsevier,
  2011.

\bibitem{Rutishauser1970}
{\sc H.~Rutishauser}, {\em {Simultaneous iteration method for symmetric
  matrices}}, Numerische Mathematik, 16 (1970), pp.~205--223.

\bibitem{Sato2016dai}
{\sc H.~Sato}, {\em {A Dai--Yuan-type Riemannian conjugate gradient method with
  the weak Wolfe conditions}}, Computational Optimization and Applications, 64
  (2016), pp.~101--118.

\bibitem{Shen2008}
{\sc H.~Shen and J.~Z. Huang}, {\em {Sparse principal component analysis via
  regularized low rank matrix approximation}}, Journal of Multivariate
  Analysis, 99 (2008), pp.~1015--1034.

\bibitem{Sjostrand2007sparse}
{\sc K.~Sjostrand, E.~Rostrup, C.~Ryberg, R.~Larsen, C.~Studholme, H.~Baezner,
  J.~Ferro, F.~Fazekas, L.~Pantoni, D.~Inzitari, et~al.}, {\em {Sparse
  decomposition and modeling of anatomical shape variation}}, IEEE Transactions
  on Medical Imaging, 26 (2007), pp.~1625--1635.

\bibitem{Stiefel1935}
{\sc E.~Stiefel}, {\em {Richtungsfelder und fernparallelismus in
  n-dimensionalen mannigfaltigkeiten}}, Commentarii Mathematici Helvetici, 8
  (1935), pp.~305--353.

\bibitem{Wang2021multipliers}
{\sc L.~Wang, B.~Gao, and X.~Liu}, {\em {Multipliers correction methods for
  optimization problems over the Stiefel manifold}}, CSIAM Transactions on
  Applied Mathematics, 2 (2021), pp.~508--531.

\bibitem{Wang2022decentralized}
{\sc L.~Wang and X.~Liu}, {\em {Decentralized optimization over the Stiefel
  manifold by an approximate augmented Lagrangian function}}, IEEE Transactions
  on Signal Processing, 70 (2022), pp.~3029--3041.

\bibitem{Wang2020distributed}
{\sc L.~Wang, X.~Liu, and Y.~Zhang}, {\em {A distributed and secure algorithm
  for computing dominant SVD based on projection splitting}}, arXiv:2012.03461,
   (2020).

\bibitem{Wen2013}
{\sc Z.~Wen and W.~Yin}, {\em {A feasible method for optimization with
  orthogonality constraints}}, Mathematical Programming, 142 (2013),
  pp.~397--434.

\bibitem{Witten2009}
{\sc D.~M. Witten, R.~Tibshirani, and T.~Hastie}, {\em A penalized matrix
  decomposition, with applications to sparse principal components and canonical
  correlation analysis}, Biostatistics, 10 (2009), pp.~515--534.

\bibitem{Wu2021clusterProfiler}
{\sc T.~Wu, E.~Hu, S.~Xu, M.~Chen, P.~Guo, Z.~Dai, T.~Feng, L.~Zhou, W.~Tang,
  L.~Zhan, et~al.}, {\em {clusterProfiler 4.0: A universal enrichment tool for
  interpreting omics data}}, The Innovation, 2 (2021), p.~100141.

\bibitem{Xiao2020}
{\sc N.~Xiao, X.~Liu, and Y.-X. Yuan}, {\em {A class of smooth exact penalty
  function methods for optimization problems with orthogonality constraints}},
  Optimization Methods and Software,  (2020), pp.~1--37.

\bibitem{Xiao2021penalty}
{\sc N.~Xiao, X.~Liu, and Y.-x. Yuan}, {\em {A penalty-free infeasible approach
  for a class of nonsmooth optimization problems over the Stiefel manifold}},
  arXiv:2103.03514,  (2021).

\bibitem{Xiao2018regularized}
{\sc X.~Xiao, Y.~Li, Z.~Wen, and L.~Zhang}, {\em {A regularized semi-smooth
  Newton method with projection steps for composite convex programs}}, Journal
  of Scientific Computing, 76 (2018), pp.~364--389.

\bibitem{Yang2014}
{\sc W.~H. Yang, L.-H. Zhang, and R.~Song}, {\em {Optimality conditions for the
  nonlinear programming problems on Riemannian manifolds}}, Pacific Journal of
  Optimization, 10 (2014), pp.~415--434.

\bibitem{Ye2021}
{\sc H.~Ye and T.~Zhang}, {\em {DeEPCA: Decentralized exact PCA with linear
  convergence rate}}, Journal of Machine Learning Research, 22 (2021),
  pp.~1--27.

\bibitem{Zhang2018a}
{\sc C.~Zhang, M.~Ahmad, and Y.~Wang}, {\em {ADMM based privacy-preserving
  decentralized optimization}}, IEEE Transactions on Information Forensics and
  Security, 14 (2018), pp.~565--580.

\bibitem{Zhu2017riemannian}
{\sc X.~Zhu}, {\em {A Riemannian conjugate gradient method for optimization on
  the Stiefel manifold}}, Computational Optimization and Applications, 67
  (2017), pp.~73--110.

\bibitem{Zou2006}
{\sc H.~Zou, T.~Hastie, and R.~Tibshirani}, {\em {Sparse principal component
  analysis}}, Journal of Computational and Graphical Statistics, 15 (2006),
  pp.~265--286.

\bibitem{Zou2018}
{\sc H.~Zou and L.~Xue}, {\em {A selective overview of sparse principal
  component analysis}}, Proceedings of the IEEE, 106 (2018), pp.~1311--1320.

\end{thebibliography}
\addcontentsline{toc}{section}{References}

\end{document}